\theoremstyle{plain}
\newtheorem{lem}{Lemma}[section]
\newtheorem{cor}[lem]{Corollary}
\newtheorem{prop}[lem]{Proposition}
\newtheorem{thm}[lem]{Theorem}
\theoremstyle{definition}
\newtheorem{defn}[lem]{Definition}
\newtheorem{ex}[lem]{Example}
\newtheorem{disc}[lem]{Remark}
\newtheorem*{convention*}{Convention}
\newtheorem*{assumption}{Assumption}
\newtheorem*{assumptions}{Assumptions}
\newcommand{\cat}[1]{\mathcal{#1}}
\newcommand{\catw}{\cat{W}}
\newcommand{\catf}{\cat{F}}
\newcommand{\catc}{\cat{C}}
\newcommand{\catd}{\cat{D}}
\newcommand{\id}{\operatorname{id}}
\newcommand{\im}{\operatorname{Im}}
\newcommand{\ti}{\tilde}
\newcommand{\bbz}{\mathbb{Z}}
\newcommand{\bbn}{\mathbb{N}}
\newcommand{\from}{\leftarrow}
\newcommand{\xra}{\xrightarrow}
\newcommand{\xla}{\xleftarrow}
\newcommand{\emptytuple}{\mathfrak{0}}
\renewcommand{\geq}{\geqslant}
\renewcommand{\leq}{\leqslant}
\newcommand{\mor}[3]{\operatorname{Mor}_{#1}(#2,#3)}
\newcommand{\catset}{\mathbf{Set}}
\newcommand{\catcat}{\mathbf{Cat}}
\newcommand{\catdom}{\mathbf{Dom}}
\newcommand{\catpom}{\mathbf{POM}}
\numberwithin{equation}{lem}
\newcommand{\divs}{\mid_{\text w}^{}}
\newcommand{\zaks}[1]{\ell_{\text{#1}}}
\newcommand{\zaksmor}{\zaks{mor}}
\newcommand{\zaksobj}{\zaks{obj}}
\newcommand{\zakselt}{\zaks{elt}}
\newcommand{\objd}{\operatorname{Obj}(\catf(D \setminus \{0\}))}
\newcommand{\mord}{\operatorname{Mor}(\catf(D \setminus \{0\}))}
\newcommand*{\doublerightarrow}[2]{\mathrel{
  \settowidth{\@tempdima}{$\scriptstyle#1$}
  \settowidth{\@tempdimb}{$\scriptstyle#2$}
  \ifdim\@tempdimb>\@tempdima \@tempdima=\@tempdimb\fi
  \mathop{\vcenter{
    \offinterlineskip\ialign{\hbox to\dimexpr\@tempdima+1em{##}\cr
    \rightarrowfill\cr\noalign{\kern.5ex}
    \rightarrowfill\cr}}}\limits^{\!#1}_{\!#2}}}
\newcommand*{\triplerightarrow}[1]{\mathrel{
  \settowidth{\@tempdima}{$\scriptstyle#1$}
  \mathop{\vcenter{
    \offinterlineskip\ialign{\hbox to\dimexpr\@tempdima+1em{##}\cr
    \rightarrowfill\cr\noalign{\kern.5ex}
    \rightarrowfill\cr\noalign{\kern.5ex}
    \rightarrowfill\cr}}}\limits^{\!#1}}}
\begin{document}

\author{Brandon Goodell}
\address{Department of Mathematical Sciences\\
	Clemson University\\
O-110 Martin Hall, Box 340975, Clemson, S.C. 29634, USA}
\email[Goodell]{bggoode@g.clemson.edu}

\author{Sean K. Sather-Wagstaff}
\email[Sather-Wagstaff]{ssather@clemson.edu}
\urladdr[Sather-Wagstaff]{https://ssather.people.clemson.edu/}

\keywords{category; factorization; integral domain; monoid; pre-order}

\subjclass[2010]{Primary: 13A05, 06F05, 20M50; Secondary: 13F15, 13G05, 20M14}


\date{\today}

\title{The Category of Factorization}
\begin{abstract}
We introduce and investigate the category of factorization of a multiplicative, commutative, cancellative, pre-ordered monoid $A$, which we denote $\mathcal{F}(A)$. The objects of $\mathcal{F}(A)$ are factorizations of elements of $A$, and the morphisms in $\mathcal{F}(A)$ encode combinatorial similarities and differences between the factorizations. We pay particular attention to the divisibility pre-order and to the monoid $A=D\setminus\{0\}$ where $D$ is an integral domain.

Among other results, we show that $\mathcal{F}(A)$ is a symmetric and strict monoidal category with weak equivalences and compute the associated category of fractions obtained by inverting the weak equivalences. Also, we use this construction to characterize various factorization properties of integral domains: atomicity, unique factorization, and so on.
\end{abstract}

\maketitle

\tableofcontents

\section{Introduction}\label{sec-intro}

It is well known that factorization in an integral domain can be poorly behaved, e.g., via the 
failure of unique factorization.
Various constructions and techniques have been used to understand the variety of behaviors that can occur. 
An example of this is the group of divisibility, which goes back at least to Krull~\cite{krull:ab},
and serves as important motivation for the present work.
See also, e.g., \cite{G3,jaffard,lorenzen,mockor,ohm,rump}.

In this paper, we introduce a construction that tracks all factorizations 
of non-zero elements
in an integral domain $D$ 
simultaneously:
the category of factorization. 
The objects of this category are the finite sequences/ordered tuples $(x_n)_{n=1}^N=(x_n)$ of non-zero elements of $D$,
along with the empty tuple $\emptytuple$. 
Each sequence $(x_n)$ can be thought of as a factorization of the product $x_1\cdots x_N$,
with the product of the empty sequence (i.e., the empty product) defined to be 1. 
In 
other words, the objects in this category are exactly the finite 
factorizations in
$D\setminus\{0\}$.

We specify the morphisms in this category in Definition~\ref{defn180217a}.
Big picture, they track various divisibility properties between factorizations. 
As an example, one type of morphism comes from  factoring the entries of a tuple. 
For instance, over $D=\bbz$ we have the tuple $(6,35)$. The factorizations $6=2\cdot 3$ and $35=5\cdot 7$
yield a morphism $(6,35)\to(2,3,5,7)$ as well as morphisms $(6,35)\to(2,5,3,7)$ and $(6,35)\to(1,7,-1,2,3,5)$ and so on. 
We identify other important types of morphisms that monitor other information about the factorizations
in Example~\ref{basic-example} and Definition~\ref{defn170715b}.

Note that we do not identify factorizations that one would usually think of as essentially the same, e.g., 
$(2,3)\neq(3,2)\neq(-3,-2)$. However, these tuples are 
isomorphic
in this category.
In fact, two tuples $(x_n)$ and $(y_m)$ are isomorphic if and only if they have the same length and there is a permutation
$\sigma$ such that 
for all $i$, 
the element $y_i$ is a unit multiple of
$x_{\sigma(i)}$.
Thus, the morphisms in this category are defined so
that the isomorphisms exactly monitor  which factorizations  are essentially the same.

As in many papers in the subject of factorization in integral domains,
we work in the more general
setting of a cancellative commutative monoid, periodically specializing to the particular case of 
the non-zero elements of an integral domain. 
See Section~\ref{sec-def} for the formal definition 
of our category
as well as some examples.
Section~\ref{sec170415b} contains fundamental properties of the morphisms in this category,
including characterizations of the monic morphisms, epic morphisms, and isomorphisms.
Section~\ref{sec170415a} constructs several useful functors; in particular, we construct adjoint functors
between the category of factorization and the original monoid, 
and we show in Theorem~\ref{prop190106b} that the category of factorization is a symmetric and strict monoidal category.

In 
Section~\ref{model-category} we 
introduce a class of weak equivalences
that mirrors properties of 
invertible
elements of $A$. 
Thus, the morphisms in $\catf(A)$ not only track relations between
factorizations in $A$, but they also see factorization properties of individual elements of $A$,
a theme that we revisit in subsequent sections.
In Theorem~\ref{thm170501a}, we show that the 
category 
of fractions obtained by inverting the weak equivalences
is exactly the original monoid $A$, considered as a category in a natural way.

Section~\ref{sec170708a} is a treatment of a notion of 
weak divisibility
for morphisms in $\catf(A)$.
Our main purpose here
is to be able to define and investigate
what it means for a morphism to be 
weakly prime
(the subject of
Section~\ref{sec170702a}) as a companion to the 
weakly irreducible
morphisms of
Section~\ref{sec170501a}. As the morphisms in our category encode relations between factorizations,
weak divisibility of morphisms encodes divisibility conditions between these relations
by Theorem~\ref{prop170708a}. And
the 
weakly prime and weakly irreducible
morphisms encode prime and irreducible relations (loosely speaking), 
by Theorems~\ref{lem170501a} and~\ref{lem170501azzz},
respectively.

The paper ends with
Section~\ref{accp-implies-atomic} which 
includes several characterizations of factorization properties of integral domains: 
unique factorization, half-factorization and so on. 
For instance, part of Theorem~\ref{prop170503a} states that an integral domain is atomic
if and only if every morphism $(x_n)\to(y_m)$ in $\catf(D \setminus \{0\})$ between non-empty tuples decomposes as 
a finite composition
$(x_n)\to\cdots\to(y_m)$ of 
weakly irreducible
morphisms and weak equivalences; this is parallel to the definition in terms of 
decompositions of elements 
as products
of 
irreducible elements
and units.

One application of the ideas from this paper is  in a new class of 
integral domains, the ``irreducible divisor pair domains'' or IDPDs.
See~\cite{sather:idpd} for more on these.

\begin{assumptions}
Throughout this paper, let $(A,\leq)$ be a multiplicative, commutative, cancellative, pre-ordered monoid. 
In particular, the relation $\leq$  is reflexive and transitive
(though it is neither symmetric nor antisymmetric in general), and it respects the monoid operation.
Sometimes, we specify that $A$ is a \emph{divisibility monoid}; this means that 
$A$ is a multiplicative, commutative, cancellative monoid where
the partial order is the \emph{divisibility order} 
$a\mid b$.
Let  $\catpom$ denote the category of pre-ordered monoids where morphisms are 
monoid homomorphisms $\phi\colon A\to B$ that respect the pre-orders on $A$ and $B$.

We set $\bbn = \left\{1,2,\ldots\right\}$. 
For all $N\in\bbn$, set $[N]=\{1,2,\ldots,N\}$.
We denote the identity function on a set $X$ as $\text{id}_X$.  
Also, $\catcat$ is the category of small categories and $\catset$ is the category of sets.
\end{assumptions}

\section{Definitions and Examples}\label{sec-def}

We begin this section by defining
the category of factorization.

\begin{defn}\label{defn180217a}
The \textit{category of factorization} of the monoid $A$, denoted $\mathcal{F}(A)$ has 
as its objects
all ordered tuples of monoid elements
\[\text{Obj}(\mathcal{F}(A)) = \left\{(x_n)_{n=1}^{N} \mid N \in \bbn, x_n \in A\right\} \cup \left\{\emptytuple\right\}\] 
including the \emph{empty tuple} denoted $\emptytuple$.
(In other words, $\text{Obj}(\mathcal{F}(A))$ is the set underlying the free monoid on $A$. See Definition~\ref{defn170605a} for more
about this.)
Again, we follow the convention that the empty product is $1=1_A$.
When we see no danger of confusion, we write a tuple $(x_n)_{n=1}^{N}$ as $(x_n)$, we write $(y_m)_{m=1}^{M}$ as $(y_m)$,
and so on. 

The morphisms $(x_n)\to(y_m)$ correspond to the functions $f\colon[M]\to[N]$ that
are \emph{order-constrained} by  $(x_n)$ and $(y_m)$, by which we mean that
for every $n\in[N]$ we have $x_n \leq \prod_{m \in f^{-1}(n)} y_m$. 
Given such a function $f$, we usually denote the associated morphism in $\mathcal F(A)$ as
$\hat f\colon (x_n)\to(y_m)$.
Since the function $f\colon[M]\to[N]$ can represent many different morphisms
in $\catf(A)$ with different domains or codomains, we occasionally employ the notation
$\hat f^{(x_n)}_{(y_m)}\colon (x_n)\to(y_m)$, but only when necessary.

Two morphisms $\hat f,\hat h\colon (x_n)\to(y_m)$ are equal if $f=h$. We
define the composition of two morphisms $\hat{f} \colon(x_n)\to (y_m)$ 
and $\hat{g} 
\colon (y_m)\to (z_p)$ by setting $\hat{g}\circ \hat{f} = \widehat{f \circ g}$. 
\label{cat-defined}
\end{defn}

We could simply define
$\mor{\catf(A)}{(x_n)}{(y_m)}$
to be
the set of 
order-constrained
functions $f\colon [M]\to[N]$,
i.e.,
we could define
$\mor{\catf(A)}{(x_n)}{(y_m)}$
as a subset of $\mor{\catset}{[M]}{[N]}$.
However, we find it helpful to distinguish between $f$ and $\hat f$.

In Theorem~\ref{cat-is-small-cat} below, 
we prove that $\mathcal{F}(A)$ is a 
small
category under this definition of composition of morphisms.
Before that, though, we discuss examples like those in the introduction to help familiarize the reader with our construction.
In particular, we introduce some terminology informally in this example, which we formalize in Definition~\ref{defn170715b} below.

\begin{ex}\label{basic-example}
Consider the divisibility monoid $A = \bbz\setminus \left\{0\right\}$. 
The objects of $\mathcal{F}(A)$ are ordered tuples of non-zero integers. 

Some morphisms in $\mathcal{F}(A)$ are ``factorization morphisms'' corresponding to factoring the elements $x_n$.
For example, we have a morphism 
$\hat f\colon (6,35) \to (2,3,5,7)$ 
corresponding to the equalities $6 = 2\cdot 3$ and $35 = 5 \cdot 7$. 
Formally, with $(x_n)=(6,35)$ and $(y_m)=(2,3,5,7)$ we have $N=2$ and $M=4$, and $\hat f$ corresponds to the function
$f\colon [4]\to[2]$ given by $f(1) = f(2) = 1$ and $f(3) = f(4) = 2$.
Moreover, $\hat f$ is the unique morphism $(6,35) \to (2,3,5,7)$; indeed, there are $2^4$  functions $[4] \to [2]$, 
but one checks readily that 
the given
function $f$ is the only one that is
order-constrained, so it is the only one that determines a morphism in $\catf(A)$. 

Some morphisms are ``divisibility morphisms'',
e.g., the morphism  $\hat g\colon(2,5) \to (6,15)$ corresponding to the conditions 
$2 \mid 6$ and $5 \mid 15$. 
Formally, this comes from the identity function $g=\id_{[2]}\colon [2] \to [2]$. 
Such morphisms  
are similar to  factorization morphisms in that no tuple elements are ``dropped'' from the domain objects since $f$ and $g$ are surjective.

Some morphisms ``drop invertible coordinates''. For instance, we have a morphism $\hat h\colon(6,1,1) \to (6)$,
coming from the function $h\colon [1] \to [3]$ with $h(1) = 1$. 
The point here is that $h^{-1}(2)=\emptyset$, so for $h$ to be order-constrained
requires the second entry $1$ to divide the empty product, and so on.
On the other hand, 
there are no morphisms $(6,2,1)\to (6)$ nor $(6,2,1)\to(6,1)$. Indeed, any such morphism would require 6 or 2 to divide the empty product 1.
These are similar to divisibility morphisms in that the underlying functions $g$ and $h$ are injective.

Most morphisms in $\catf(A)$ exhibit aspects of all of the above  types. 
For example, consider the morphism 
$\widehat\alpha\colon(2,3,1,1) \to (2,7,3,5)$ coming
from the function $\alpha\colon[4] \to [4]$ defined by $\alpha(1)=\alpha(2)=1$ and $\alpha(3)=\alpha(4)=2$.
One can decompose $\widehat\alpha$ as the composition
\[(2,3,1,1) \xra{\hat\epsilon} (2,3) \xra{\widehat{\id_{[2]}}} (14,15) \xra{\hat\phi} (2,7,3,5)\]
where $\hat\epsilon$ drops the invertible coordinates, 
$\widehat{\id_{[2]}}$ is a divisibility morphism that uses the  relations $2 \mid 14$ and $3 \mid 15$, 
and $\hat\phi$ is a factorization morphism that uses
the factorizations $14=2\cdot 7$ and $15=3\cdot 5$. 
See Proposition~\ref{prop170423a} for a general factorization result for morphisms.
\end{ex}

\begin{thm}\label{cat-is-small-cat}
The construction
$\mathcal{F}(A)$ is a small category.
\end{thm}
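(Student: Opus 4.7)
The plan is to verify each of the axioms required for $\mathcal{F}(A)$ to be a small category: that the object collection and hom-collections are sets, that composition is well-defined (i.e.\ produces an order-constrained function), that composition is associative, and that identity morphisms exist and behave correctly.

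First, I would deal with the smallness. The object class $\mathrm{Obj}(\mathcal{F}(A))$ is a subset of $\left(\bigsqcup_{N\in\mathbb{N}} A^{N}\right)\cup\{\emptytuple\}$, which is a set because $A$ is a set. For any two objects $(x_n)_{n=1}^{N}$ and $(y_m)_{m=1}^{M}$, the morphism collection $\mathrm{Mor}_{\mathcal{F}(A)}((x_n),(y_m))$ is in bijection with a subset of $\mathrm{Mor}_{\catset}([M],[N])$, hence is a set (in fact finite when $M,N<\infty$).

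The central technical verification is that composition is well-defined. Given $\hat{f}\colon(x_n)\to(y_m)$ and $\hat{g}\colon(y_m)\to(z_p)$, so that $f\colon[M]\to[N]$ and $g\colon[P]\to[M]$ are order-constrained, I would show that $f\circ g\colon[P]\to[N]$ is order-constrained by $(x_n)$ and $(z_p)$. Fix $n\in[N]$. Using $(f\circ g)^{-1}(n)=g^{-1}(f^{-1}(n))$, the order-constraint on $g$ gives $y_m\leq\prod_{p\in g^{-1}(m)}z_p$ for each $m\in f^{-1}(n)$. Since $\leq$ respects the monoid operation, multiplying these inequalities over $m\in f^{-1}(n)$ yields
\[
\prod_{m\in f^{-1}(n)} y_m \;\leq\; \prod_{m\in f^{-1}(n)}\prod_{p\in g^{-1}(m)} z_p \;=\; \prod_{p\in (f\circ g)^{-1}(n)} z_p,
\]
where the equality uses commutativity and associativity together with the partition of $(f\circ g)^{-1}(n)$ into the fibers $g^{-1}(m)$ over $m\in f^{-1}(n)$. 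Combining with $x_n\leq\prod_{m\in f^{-1}(n)}y_m$ and transitivity of $\leq$ gives the required inequality. The empty-tuple boundary cases need a quick sanity check (the empty function into $[N]$ is order-constrained iff every $x_n\leq 1$; and there are no morphisms out of a non-empty tuple into $\emptytuple$ unless all entries are $\leq 1$), but all of this follows from the same computation read for $M=0$ or $P=0$.

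Next, I would verify the identity and associativity axioms. For each object $(x_n)_{n=1}^N$, the function $\id_{[N]}$ is order-constrained by $(x_n)$ with itself because $x_n\leq x_n$ by reflexivity of $\leq$, so $\widehat{\id_{[N]}}\colon(x_n)\to(x_n)$ is a morphism; for the empty tuple the empty function serves this role. The identity laws $\hat{f}\circ\widehat{\id}=\hat{f}=\widehat{\id}\circ\hat{f}$ then reduce via the definition $\hat{g}\circ\hat{f}=\widehat{f\circ g}$ to the identity laws for functions on the sets $[M]$ and $[N]$. Associativity is similar: for $\hat{f}\colon(x_n)\to(y_m)$, $\hat{g}\colon(y_m)\to(z_p)$, $\hat{h}\colon(z_p)\to(w_q)$, both $(\hat{h}\circ\hat{g})\circ\hat{f}$ and $\hat{h}\circ(\hat{g}\circ\hat{f})$ unwind to $\widehat{f\circ g\circ h}$, using associativity of function composition.

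The main (mild) obstacle is the order-constraint inheritance in the composition step, which is really a bookkeeping exercise organized by the fiber partition $(f\circ g)^{-1}(n)=\bigsqcup_{m\in f^{-1}(n)}g^{-1}(m)$; everything else reduces mechanically to the corresponding facts about the category of sets, together with the monoid-compatible pre-order axioms for $(A,\leq)$.
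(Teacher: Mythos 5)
Your proof is correct and follows essentially the same route as the paper's: the key step in both is verifying that $f\circ g$ inherits the order-constraint via the fiber decomposition $(f\circ g)^{-1}(n)=\bigsqcup_{m\in f^{-1}(n)}g^{-1}(m)$ and the compatibility of $\leq$ with the monoid operation, with identities, associativity, and smallness all reducing to the corresponding facts in $\catset$. Your treatment is slightly more explicit about the empty-tuple boundary cases, but the argument is the same.
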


\begin{proof}
We need to show that composition is well-defined in $\catf(A)$.
That is, given morphisms $\hat f\colon (x_n)\to(y_m)$ and $\hat g\colon(y_m)\to(z_p)$, we need to show that
$\widehat{f\circ g}$ is a morphism in $\catf(A)$. For each $n\in[N]$ we have
$x_n \leq \prod_{m \in f^{-1}(n)} y_m$ since $f$ is order-constrained,
and similarly for each $m\in f^{-1}(n)$ we have
$y_m \leq \prod_{p \in g^{-1}(m)} z_p$.
Since the partial order on $A$ respects the operation on $A$, it follows that
$$\textstyle
x_n \leq \prod_{m \in f^{-1}(n)} y_m\leq \prod_{m \in f^{-1}(n)}\left(\prod_{p \in g^{-1}(m)} z_p\right)=\prod_{p \in (f\circ g)^{-1}(n)} z_p$$
so $\widehat{f\circ g}$ is indeed a morphism.

It is straightforward to show that each identity function $\id_{[N]}\colon[N]\to[N]$ 
is order-constrained with respect to $(x_n)$, determining a morphism $(x_n)\to(x_n)$. 
This yields an identity morphism since, for instance,
$\hat f\circ\widehat{\id_{[N]}}=\widehat{\id_{[N]}\circ f}=\hat f$.
Also, since composition in $\catf(A)$ is based on composition of functions in $\catset$, our rule of composition is
associative, so $\catf(A)$ is a category.

By construction, the collection of objects of $\catf(A)$ is a set, being the collection of all ordered tuples over the set $A$.
Also, each collection $\mor{\catf(A)}{(x_n)}{(y_m)}$ is a finite set since it corresponds to a subcollection of the set
$\mor{\catset}{[M]}{[N]}$.
Thus, $\catf(A)$ is a small category, as desired.
\end{proof}

For the last result of this section, recall the notations $\catcat$ and $\catpom$ from our assumptions at the end of 
Section~\ref{sec-intro}.

\begin{prop}\label{prop170430d}
The category of factorization operation is a covariant functor $\catf\colon\catpom\to\catcat$.
\end{prop}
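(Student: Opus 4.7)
The plan is to construct $\catf(\phi)$ for each morphism $\phi\colon A\to B$ in $\catpom$, then verify the functor axioms on both sides (first that $\catf(\phi)$ really is a functor $\catf(A)\to\catf(B)$, and second that the assignment $\phi\mapsto\catf(\phi)$ preserves identities and composition).

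First I would define $\catf(\phi)$ on objects by sending $\emptytuple\mapsto\emptytuple$ and $(x_n)_{n=1}^{N}\mapsto(\phi(x_n))_{n=1}^{N}$. On morphisms, given $\hat f\colon(x_n)\to(y_m)$ coming from an order-constrained $f\colon[M]\to[N]$, I would set
\[
\catf(\phi)(\hat f) \;=\; \hat f^{(\phi(x_n))}_{(\phi(y_m))}.
\]
The only thing to check here is that $f$ remains order-constrained after applying $\phi$ coordinatewise, i.e., that $\phi(x_n)\leq\prod_{m\in f^{-1}(n)}\phi(y_m)$ for all $n\in[N]$. This is the main (and essentially only) content of the argument, but it is immediate: since $\phi$ respects the pre-orders and is a monoid homomorphism,
\[
\phi(x_n) \;\leq\; \phi\!\left(\prod_{m\in f^{-1}(n)} y_m\right) \;=\; \prod_{m\in f^{-1}(n)} \phi(y_m).
\]

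Next I would verify that $\catf(\phi)$ is a functor. Preservation of identities is clear because the underlying function of $\widehat{\id_{[N]}}$ does not depend on the tuple, so $\catf(\phi)(\widehat{\id_{[N]}}^{(x_n)}_{(x_n)}) = \widehat{\id_{[N]}}^{(\phi(x_n))}_{(\phi(x_n))}$. Preservation of composition follows from the definition of composition in $\catf(A)$ and $\catf(B)$ given in Definition~\ref{defn180217a}: if $\hat f\colon(x_n)\to(y_m)$ and $\hat g\colon(y_m)\to(z_p)$, then
\[
\catf(\phi)(\hat g\circ\hat f) \;=\; \catf(\phi)(\widehat{f\circ g}) \;=\; \widehat{f\circ g}^{(\phi(x_n))}_{(\phi(z_p))} \;=\; \catf(\phi)(\hat g)\circ\catf(\phi)(\hat f).
\]

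Finally I would check functoriality of $\catf$ itself. For $\phi=\id_A$, both the object and morphism assignments are the identity, so $\catf(\id_A)=\id_{\catf(A)}$. For $\phi\colon A\to B$ and $\psi\colon B\to C$ in $\catpom$, on objects $(x_n)\mapsto(\psi(\phi(x_n)))=(\psi\phi(x_n))$, which matches $\catf(\psi\phi)$, and on morphisms both $\catf(\psi)\circ\catf(\phi)$ and $\catf(\psi\phi)$ send $\hat f$ to the morphism in $\catf(C)$ determined by the same underlying function $f$. Thus $\catf(\psi\circ\phi)=\catf(\psi)\circ\catf(\phi)$, completing the proof. No serious obstacle is expected; the lone nontrivial point is the preservation of the order-constraint condition, which is exactly what the hypotheses on $\phi$ are designed to guarantee.
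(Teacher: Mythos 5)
Your proposal is correct and follows essentially the same route as the paper: define $\catf(\phi)$ coordinatewise on objects, observe that the order-constraint condition is preserved because $\phi$ is a pre-order-respecting monoid homomorphism, and note that the remaining functoriality checks are immediate since the underlying functions are untouched. The only difference is that you spell out the verification that each $\catf(\phi)$ is itself a functor, which the paper dismisses as straightforward.
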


\begin{proof}
Let $\phi\colon A\to B$ be a morphism in $\catpom$. 
Let $\catf(\phi)\colon\catf(A)\to\catf(B)$ be given on objects as
$\catf(\phi)(x_n)=(\phi(x_n))$. 
For each morphism $\hat f^{(x_n)}_{(y_m)}\colon (x_n)\to (y_m)$ in $\catf(A)$,
take the  relation $x_n\leq\prod_{m\in f^{-1}(n)}y_m$ and apply $\phi$ to conclude that
$\phi(x_n)\leq\phi\left(\prod_{m\in f^{-1}(n)}y_m\right)=\prod_{m\in f^{-1}(n)}\phi(y_m)$;
from this, we conclude that $\hat f^{(\phi(x_n))}_{(\phi(y_m))}\colon(\phi(x_n))\to(\phi(y_m))$ is a morphism in $\catf(B)$, in other words,
we have that
$\hat f^{\catf(\phi)(x_n)}_{\catf(\phi)(y_m)}\colon\catf(\phi)(x_n)\to\catf(\phi)(y_m)$ is a morphism in $\catf(B)$,
and we define $\catf(\phi)(\hat f^{(x_n)}_{(y_m)})=\hat f^{\catf(\phi)(x_n)}_{\catf(\phi)(y_m)}$.
It is straightforward to show that $\catf(\id_A)=\id_{\catf(A)}$ and
$\catf(\phi\circ\psi)=\catf(\phi)\circ\catf(\psi)$, so $\catf$ is a 
covariant functor, as desired.
\end{proof}

\begin{disc}\label{disc170430d}
Many examples of interest for us involve taking an integral domain $D$ and considering the divisibility monoid $A=D\setminus\{0\}$
and its category of factorization $\catf(D\setminus\{0\})$. 
If $\catdom$ is the category of integral domains where the morphisms are the ring monomorphisms, then the operation
$D\mapsto D\setminus\{0\}$ describes a covariant functor $\catdom\to\catpom$,
where a morphism $\iota\colon D\hookrightarrow D'$ in $\catdom$ maps to the restriction $\iota'\colon D\setminus\{0\}
\to D'\setminus\{0\}$. Note that $\iota'$ is a morphism since $\iota$ is injective and respects products.
Consequently,
the composition operation $D\mapsto\catf(D\setminus\{0\})$
describes a covariant functor $\catdom\to\catcat$.
\end{disc}

\section{Morphisms}\label{sec170415b}

In this section, we analyze morphisms in the category $\catf(A)$,
where $A$ is still a multiplicative, commutative, cancellative, pre-ordered monoid.
We begin with the morphisms in and out of the empty tuple $\emptytuple$.

\begin{prop}\label{empty_tuple_homset}
For each tuple $(x_n)$ in $\catf(A)$, we have 
\begin{align*}
\left|\mor{\mathcal{F}(A)}{\emptytuple}{(x_n)}\right| 
&= \begin{cases}
1 & \text{if $(x_n) = \emptytuple$}\\
0 & \text{otherwise}
\end{cases}
\\
\left|\mor{\mathcal{F}(A)}{(x_n)}{\emptytuple}\right| 
&\leq 1.
\intertext{If $A$ is a divisibility monoid, then}
\left|\mor{\mathcal{F}(A)}{(x_n)}{\emptytuple}\right| 
&= \begin{cases}
1 & \text{if $(x_n) = \emptytuple$ or each $x_n$ is invertible} \\
0 & \text{if $(x_n) \neq \emptytuple$ and some $x_n$ is non-invertible.}
\end{cases}
\end{align*}
\end{prop}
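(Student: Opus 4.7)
The plan is to unpack Definition~\ref{defn180217a} and track how the ``order-constrained'' condition degenerates when one of the tuples is empty; the three claims then follow by bookkeeping on the underlying index functions together with the empty-product convention.

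First I would handle morphisms out of $\emptytuple$. Such a morphism $\emptytuple \to (x_n)$ corresponds to a set function $f\colon [N] \to [0] = \emptyset$, and such a function exists (uniquely, as the empty function) if and only if $N = 0$. When it exists, the order-constrained condition is vacuous because $[N]$ is empty, so we obtain exactly one morphism when $(x_n) = \emptytuple$ and none otherwise.

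Next I would handle morphisms into $\emptytuple$. A morphism $(x_n)\to \emptytuple$ corresponds to a function $f\colon [0]\to [N]$, of which there is always exactly one (the empty function), so the hom-set automatically has cardinality at most $1$, which already gives the second displayed inequality. For this lone candidate to be order-constrained requires, for every $n \in [N]$, that $x_n \leq \prod_{m\in f^{-1}(n)} y_m = \prod_{m\in \emptyset} y_m = 1$, invoking the empty-product convention $1 = 1_A$ from Definition~\ref{defn180217a}. Hence the hom-set has a (unique) element precisely when $x_n \leq 1$ for every $n \in [N]$, a condition that holds vacuously when $(x_n) = \emptytuple$.

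Finally, specializing to the divisibility monoid case, the pre-order $\leq$ is divisibility, so the condition $x_n \leq 1$ becomes $x_n \mid 1$, which is equivalent to $x_n$ being invertible in $A$. Combined with the vacuous case $(x_n) = \emptytuple$, this yields the stated dichotomy. The main (modest) obstacle is handling the order-constrained inequality against an empty preimage correctly; once the empty-product convention is in place, every claim follows directly from Definition~\ref{defn180217a}.
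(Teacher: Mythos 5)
Your proof is correct and follows essentially the same route as the paper's: counting the (at most one) underlying set functions into and out of $\emptyset$, and reading the order-constrained condition against the empty product to get $x_n\leq 1$, which in the divisibility case becomes invertibility of each $x_n$. No gaps.
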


\begin{proof}
If $(x_n)\neq \emptytuple$, then 
there are no morphisms $\hat{f}\colon\emptytuple \to (x_n)$, since there are no functions  
$f\colon [N] \to \emptyset$. 
On the other hand, for $(x_n)=\emptytuple$, the unique function $f\colon\emptyset\to\emptyset$
vacuously determines the identity morphism $\id_{\emptytuple}\colon\emptytuple\to\emptytuple$.
This justifies our formula for $\left|\mor{\mathcal{F}(A)}{\emptytuple}{(x_n)}\right|$ from the statement of the proposition.

The morphisms  $\hat{f}\colon(x_n) \to \emptytuple$ correspond
exactly  to the functions  $f\colon\emptyset \to [N]$ such that  $x_n \leq 1_A$ for all $n\in[N]$. 
In particular, there is at most one such morphism $\hat f$.
This justifies our bound for $\left|\mor{\mathcal{F}(A)}{(x_n)}{\emptytuple}\right|$ from the statement.

In light of what we have shown, we assume for the rest of this proof that $A$ is a divisibility monoid and $(x_n)\neq\emptytuple$.
Since $A$ is a divisibility monoid, the unique function $f\colon\emptyset \to [N]$ satisfies  $x_n \leq 1_A$ for all $n$
if and only if 
$x_n \mid 1_A$ for all $n$, that is, if and only if 
each $x_n$ is invertible.
This gives the desired computation for 
$\left|\mor{\mathcal{F}(A)}{(x_n)}{\emptytuple}\right|$.
\end{proof}

The next example shows  that the divisibility monoid assumption is crucial for the last formula in Proposition~\ref{empty_tuple_homset}.
It applies similarly to subsequent results of this section, though we leave the details to the reader.

\begin{ex}\label{ex170421a}
Consider the real interval $(0,1]$ under multiplication with the standard ordering $\leq$ of real numbers. 
Then $a\leq 1$ for all $a\in (0,1]$, but the only invertible element of $(0,1]$ is $1$. 
Thus, the proof of 
Proposition~\ref{empty_tuple_homset}
shows that 
$\left|\mor{\mathcal{F}(A)}{(x_n)}{\emptytuple}\right|=1$ for all $(x_n)$ regardless of the invertibility of the $x_n$.
\end{ex}

The proof of the next result is similar to
the previous one. The point is that there is a unique function
$[N]\to[1]$ and there are $N$ functions $[1]\to[N]$.

\begin{prop}\label{singletons_unique}
If $(y)$ is a $1$-tuple in $\mathcal{F}(A)$ and $(x_n)$ is an $N$-tuple in $\mathcal{F}(A)$, then
\begin{align}
\left|\mor{\mathcal{F}(A)}{(y)}{(x_n)}\right| 
\label{eq170421b}
&=\begin{cases}
1&\text{if $y\leq x_1\cdots x_N$}\\
0&\text{otherwise}
\end{cases}
\\
\left|\mor{\mathcal{F}(A)}{(x_n)}{(y)}\right| 
\notag
&\leq |\{n\in[N]\mid x_n\leq y\}|
\leq N.
\\
\intertext{If in addition $A$ is a divisibility monoid, then}
\left|\mor{\mathcal{F}(A)}{(x_n)}{(y)}\right| 
\notag
&=|\{n_0\in[N]\mid \text{$x_n$ is invertible for all $n\neq n_0$ and $x_{n_0}\mid y$}\}|.
\end{align}
\end{prop}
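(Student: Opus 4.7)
The proof will proceed in close parallel to that of Proposition~\ref{empty_tuple_homset}, by explicitly enumerating the functions between the relevant index sets and then checking the order-constraint condition in each case.

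For the first equality, I would observe that a morphism $(y)\to(x_n)$ corresponds by Definition~\ref{cat-defined} to a function $f\colon[N]\to[1]$, and there is exactly one such function, namely the constant function. Its single fiber is $f^{-1}(1)=[N]$, so the order-constraint reduces to the single condition $y\leq\prod_{n=1}^{N}x_n$. This immediately gives formula~\eqref{eq170421b}.

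For the second (inequality) statement, the morphisms $(x_n)\to(y)$ correspond to functions $f\colon[1]\to[N]$, of which there are exactly $N$, each determined by $n_0:=f(1)\in[N]$. I would check the order-constraint on such an $f$ one fiber at a time: for the index $n_0$ the fiber is $\{1\}$, so the condition reads $x_{n_0}\leq y$; for every index $n\in[N]\setminus\{n_0\}$, the fiber is empty, so the condition reads $x_n\leq 1_A$. The existence of the morphism thus forces $x_{n_0}\leq y$, and so the map $\hat f\mapsto n_0$ is an injection from $\mor{\catf(A)}{(x_n)}{(y)}$ into $\{n\in[N]\mid x_n\leq y\}$. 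This yields both inequalities.

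For the divisibility-monoid case, I would use the same injection $\hat f\mapsto n_0$, but now the condition $x_n\leq 1_A$ for $n\neq n_0$ is by definition $x_n\mid 1_A$, i.e., $x_n$ is invertible, and the condition $x_{n_0}\leq y$ is $x_{n_0}\mid y$. Hence the image of the injection is exactly the set $\{n_0\in[N]\mid x_n\text{ is invertible for all }n\neq n_0\text{ and }x_{n_0}\mid y\}$, which gives the desired equality. There is no real obstacle here; the only point requiring care is keeping straight that an arrow $(x_n)\to(y_m)$ is encoded by a function running in the opposite direction, $[M]\to[N]$, and using this consistently when identifying the singleton fiber versus the empty fibers.
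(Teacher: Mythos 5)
Your proposal is correct and follows exactly the argument the paper intends: the paper only sketches this proof, remarking that it is "similar to the previous one" and that the key point is the unique function $[N]\to[1]$ versus the $N$ functions $[1]\to[N]$, which is precisely the enumeration and fiber-by-fiber check you carry out. No discrepancies.
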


Note that the next result applies to diagrams of other shapes that can be described in terms of triangular diagrams.

\begin{cor}\label{cor190115a}
Let morphisms $(y)\xra{\hat f}(x_n)$ and $(y)\xra{\hat g}(z_p)\xra{\hat h}(x_n)$
in $\catf(A)$ be given where $(y)$ is a 1-tuple. Then the following diagram commutes in $\catf(A)$.
$$\xymatrix{(y)\ar[r]^-{\hat g}\ar[rd]_{\hat f}
&(z_p)\ar[d]^{\hat h}\\
&(x_n)}$$
\end{cor}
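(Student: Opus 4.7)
The plan is to deduce this directly from Proposition~\ref{singletons_unique}, specifically from the cardinality formula~\eqref{eq170421b} for hom-sets out of a $1$-tuple. The key observation is that both $\hat f$ and the composition $\hat h\circ\hat g$ are morphisms in $\mor{\catf(A)}{(y)}{(x_n)}$, so if we can show this hom-set has at most one element, the equality $\hat f = \hat h\circ\hat g$ is forced and the triangle commutes.

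More precisely, I would first note that the mere existence of $\hat f\colon (y)\to(x_n)$ implies $y\le x_1\cdots x_N$ by the order-constrained condition (taking $f$ to be the unique function $[N]\to[1]$, we get $y\le \prod_{n\in f^{-1}(1)} x_n = x_1\cdots x_N$). Then equation~\eqref{eq170421b} gives $|\mor{\catf(A)}{(y)}{(x_n)}|=1$. Since the composition $\hat h\circ\hat g$ also lies in this singleton hom-set, it must equal $\hat f$, which is precisely the commutativity of the triangle.

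There is essentially no obstacle here: the result is an immediate corollary of~\ref{singletons_unique}, and this is why the authors phrase it as a corollary. The only minor care needed is to observe that we do not have to appeal to the divisibility-monoid hypothesis, since we are using only the upper-bound half of~\eqref{eq170421b}, which holds in full generality for a pre-ordered monoid.
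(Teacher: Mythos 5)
Your proposal is correct and is essentially identical to the paper's own proof: both appeal to formula~\eqref{eq170421b} of Proposition~\ref{singletons_unique} to conclude that $\mor{\catf(A)}{(y)}{(x_n)}$ has at most one element, so $\hat f$ and $\hat h\circ\hat g$ must coincide. Your added observation that only the general (non-divisibility-monoid) part of that proposition is needed is accurate and consistent with the paper.
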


\begin{proof}
Formula~\eqref{eq170421b} from Proposition~\ref{singletons_unique} implies that there is at most one morphism
$(y)\to(x_n)$. Our assumptions imply that there exists such a morphism, so all such morphisms must be equal.
\end{proof}

Next, we 
show
that $\catf(A)$ can have several initial objects.
Of course, they must be isomorphic, so this gives the first hint of our characterization of isomorphisms
in Theorem~\ref{iso-thm} 
below.

\begin{cor}\label{initial_object_thm}
Assume that $A$ is a divisibility monoid,
and let $y\in A$. Then $y$ is invertible in $A$ if and only if 
the 1-tuple $(y)$ is an initial object in $\mathcal{F}(A)$.
\end{cor}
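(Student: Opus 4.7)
The plan is to unpack the definition of an initial object and apply the two counting results from this section (Propositions~\ref{empty_tuple_homset} and~\ref{singletons_unique}) to the singleton $(y)$, handling the codomain $\emptytuple$ separately from the non-empty codomains.

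For the forward direction, suppose $(y)$ is initial in $\catf(A)$. Then in particular there must exist a (unique) morphism $(y)\to \emptytuple$. Since $(y)\neq \emptytuple$, the divisibility monoid formula in Proposition~\ref{empty_tuple_homset} forces each entry of $(y)$ to be invertible, i.e., $y$ is invertible.

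For the backward direction, suppose $y$ is invertible and let $(x_n)$ be an arbitrary object of $\catf(A)$. If $(x_n)=\emptytuple$, then Proposition~\ref{empty_tuple_homset} (divisibility monoid case) gives exactly one morphism $(y)\to\emptytuple$, using that $y$ is invertible. If $(x_n)$ is an $N$-tuple with $N\geq 1$, then invertibility of $y$ in the divisibility monoid $A$ gives $y\mid x_1\cdots x_N$, so formula~\eqref{eq170421b} of Proposition~\ref{singletons_unique} yields exactly one morphism $(y)\to(x_n)$. Thus $(y)$ is initial.

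There is no serious obstacle: the two propositions invoked have already done the combinatorial work, and the corollary essentially amounts to observing that the condition ``$y\mid x_1\cdots x_N$ for every tuple'' (including the empty product $1_A$) is equivalent to $y\mid 1_A$, i.e., to invertibility of $y$.
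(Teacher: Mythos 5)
Your proof is correct and follows essentially the same route as the paper, which reduces the claim to formula~\eqref{eq170421b}: $y$ is invertible iff $y\mid 1$ iff $y$ divides every product $x_1\cdots x_N$. The only difference is that you treat the codomain $\emptytuple$ as a separate case via Proposition~\ref{empty_tuple_homset}, which is a reasonable (and slightly more careful) way to handle the empty product that the paper's one-line proof glosses over.
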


\begin{proof}
The element $y \in A$ is invertible if and only if $y\mid 1$ if and only if $y \mid x$ for all $x \in A$, so the result follows directly from 
formula~\eqref{eq170421b}.
\end{proof}

\begin{prop}
Assume that $A$ is a divisibility monoid.
If $A$ has a non-invertible element, then
$\mathcal{F}(A)$ has no terminal object.
\end{prop}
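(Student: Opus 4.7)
The plan is to argue by contradiction: suppose that $(z_p)_{p=1}^{P}$ is a terminal object in $\catf(A)$, and split into two cases based on whether $(z_p)$ is the empty tuple.

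First I would consider the case $(z_p) \neq \emptytuple$, i.e., $P \geq 1$. By the first formula in Proposition~\ref{empty_tuple_homset}, we have $|\mor{\catf(A)}{\emptytuple}{(z_p)}| = 0$. This contradicts terminality of $(z_p)$, since a terminal object must receive at least one morphism from every object, including $\emptytuple$.

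Next I would consider the case $(z_p) = \emptytuple$. Since $A$ contains a non-invertible element, say $a \in A$, consider the $1$-tuple $(a) \in \text{Obj}(\catf(A))$. Because $A$ is a divisibility monoid and $a$ is non-invertible, the divisibility-monoid formula in Proposition~\ref{empty_tuple_homset} gives $|\mor{\catf(A)}{(a)}{\emptytuple}| = 0$. Again, this contradicts the assumption that $\emptytuple$ is terminal.

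Since both cases lead to a contradiction, no terminal object exists. The argument is essentially an immediate corollary of Proposition~\ref{empty_tuple_homset}, so the main (very minor) obstacle is merely to observe that the two cases $(z_p) = \emptytuple$ and $(z_p) \neq \emptytuple$ exhaust the possibilities and that each formula from that proposition rules out the corresponding case.
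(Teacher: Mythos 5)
Your proof is correct, but it takes a genuinely different route from the paper's. You split on whether the candidate terminal object is $\emptytuple$ and, in each case, exhibit an object with \emph{no} morphism into it, citing the two counting formulas of Proposition~\ref{empty_tuple_homset}: a non-empty tuple receives nothing from $\emptytuple$, and $\emptytuple$ receives nothing from $(a)$ when $a$ is non-invertible (this is where the divisibility-monoid hypothesis and the existence of a non-invertible element enter). The paper instead argues by contrapositive without touching $\emptytuple$: assuming $(z_p)$ is terminal, every $1$-tuple $(\alpha)$ maps into it, so $\alpha \mid z$ where $z=\prod_p z_p$; taking $\alpha = z^2$ gives $z^2 \mid z$, cancellativity forces $z$ to be invertible, and then every $\alpha$ divides an invertible element and is itself invertible. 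Your argument is shorter and leans entirely on the already-established hom-set computations, at the cost of a case split; the paper's argument is uniform across both cases and extracts slightly more information, namely that terminality of \emph{any} tuple forces the whole monoid to consist of invertible elements. Both are valid proofs of the stated proposition.
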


\begin{proof}
We argue by contrapositive. Assume
that $(z_p)$ is
terminal in $\catf(A)$, and set $z = \prod_p z_p \in A$.
Then every tuple $(x_n)$ has 
a
morphism of the form $(x_n) \to (z_p)$. 
In particular, for each $\alpha \in A$, there exists a morphism of the form $(\alpha) \to (z_p)$,
hence $\alpha \mid z$. 
In particular, $z^2 \mid z$ so $z = az^2$ for some $a\in A$. Cancellativity implies that $1 = az$, so $z$ is invertible. 
Thus, each element 
$\alpha\in A$
must divide the invertible element $z$, and it follows that each element  
$\alpha\in A$
is also invertible, 
as desired.
\end{proof}

In general, 
the morphisms $(x_n)\to(y_m)$ in $\catf(A)$
correspond to functions $[M]\to[N]$,
so we have $|\mor{\catf(A)}{(x_n)}{(y_m)}|\leq N^M$.
As we see next, this bound is not sharp in general, though it can be
for specific examples.

\begin{ex}\label{ex170421b}
Consider the divisibility monoid $\bbz\setminus 0$, and let $M,N\in\bbn$.
There are no morphisms from the $N$-tuple $(2,2,\ldots,2)$ to the $M$-tuple $(3,3,\ldots,3)$. 
On the other extreme, there are $N^M$ morphisms from the $N$-tuple $(1,1,\ldots,1)$ to the $M$-tuple $(3,3,\ldots,3)$. 
Most tuples fit somewhere in the middle of this spectrum; for instance, there are exactly two morphisms $(1,2)\to(1,2)$.
\end{ex}

Next, we characterize the isomorphisms in our 
category.

\begin{thm}\label{iso-thm}
Assume that $A$ is a divisibility monoid.
A morphism $\hat{f}\colon(x_n) \to (y_m)$ in $\catf(A)$ is an isomorphism if and only (1) $N = M$, 
(2) $f$ is a bijection, and 
(3) there is an $N$-tuple 
$(u_n)$ of invertible monoid elements
such that   
$u_nx_n = y_{f^{-1}(n)}$
for each $n\in[N]$.
\end{thm}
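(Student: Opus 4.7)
The plan is to prove both directions of the equivalence by exploiting the composition rule $\hat g\circ\hat f=\widehat{f\circ g}$ together with the order-constraints that define morphisms in $\catf(A)$.

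For the forward direction, assume $\hat f\colon(x_n)\to(y_m)$ has an inverse, which must be of the form $\hat g\colon(y_m)\to(x_n)$ for some order-constrained $g\colon[N]\to[M]$. The identities $\hat g\circ\hat f=\widehat{\id_{[N]}}$ and $\hat f\circ\hat g=\widehat{\id_{[M]}}$ translate by the composition rule into $f\circ g=\id_{[N]}$ and $g\circ f=\id_{[M]}$, giving conditions (1) and (2) immediately, with $g=f^{-1}$. Since $f$ is a bijection, for each $n\in[N]$ we have $f^{-1}(n)=\{f^{-1}(n)\}$ (abusing notation between the singleton and its element), so the order-constraint for $\hat f$ reads $x_n\mid y_{f^{-1}(n)}$. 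Likewise the order-constraint for $\hat g$ at index $m=f^{-1}(n)$ gives $y_{f^{-1}(n)}\mid x_n$. Combining these, $x_n$ and $y_{f^{-1}(n)}$ are mutual divisors, so in the cancellative monoid $A$ they differ by an invertible factor $u_n$ with $u_nx_n=y_{f^{-1}(n)}$, yielding (3).

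For the reverse direction, assume (1), (2), and (3). Define $g:=f^{-1}\colon[N]\to[M]$. To show that $g$ determines a morphism $\hat g\colon(y_m)\to(x_n)$, I need to check it is order-constrained, i.e.\ that for each $m\in[M]$, $y_m\mid\prod_{n\in g^{-1}(m)}x_n=x_{f(m)}$. Writing $n=f(m)$, condition (3) gives $u_nx_n=y_m$; since $u_n$ is invertible this rearranges to $x_n=u_n^{-1}y_m$, so indeed $y_m\mid x_n$ and $g$ is order-constrained. Finally, $\hat g\circ\hat f=\widehat{f\circ g}=\widehat{\id_{[N]}}=\id_{(x_n)}$ and $\hat f\circ\hat g=\widehat{g\circ f}=\widehat{\id_{[M]}}=\id_{(y_m)}$, so $\hat f$ is an isomorphism with inverse $\hat g$.

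The step requiring the most care is extracting condition (3) in the forward direction: one must keep track of which index set lives where, translate the singleton preimages $f^{-1}(n)$ and $g^{-1}(m)$ into genuine divisibility relations in $A$, and then invoke cancellativity of $A$ to convert the pair of divisibilities into an equality up to an invertible element. Everything else is bookkeeping with the composition rule and the hypothesis that $A$ is a divisibility monoid (so that $\leq$ really is $\mid$).
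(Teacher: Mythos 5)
Your proof is correct and follows essentially the same route as the paper's: translate the isomorphism equations through the rule $\hat g\circ\hat f=\widehat{f\circ g}$ to get inverse bijections, read the singleton-preimage order-constraints as mutual divisibilities, and use cancellativity to extract the invertible factors; conversely, check that $f^{-1}$ is order-constrained and inverts $\hat f$. The only difference is cosmetic — you verify the converse's order-constraint explicitly where the paper says it "follows readily."
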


\begin{proof}
For the 
the forward implication,
let $\hat{f}\colon(x_n) \to (y_m)$ be an isomorphism in $\catf(A)$ with inverse morphism $\hat{g}\colon(y_m) \to (x_n)$.
It follows that $\widehat{\id_{[N]}}=\text{id}_{(x_n)}=\hat{g} \circ \hat{f}=\widehat{f \circ g}$, and similarly
$\widehat{\id_{[M]}}=\widehat{g \circ f}$. We conclude that $g \circ f = \text{id}_{[M]}$ and 
$f \circ g = \text{id}_{[N]}$, so $f$ and $g$ are inverse bijections. In particular, we have $N=M$, and the functions $f$ and $g$ are bijections.
Furthermore, it follows that the divisibility conditions making $\hat f$ and $\hat g$ morphisms in $\catf(A)$
read as 
$x_n \mid y_{f^{-1}(n)}$  and $y_{f^{-1}(n)} \mid x_{n}$, respectively, for each $n$. 
Since $A$ is cancellative, this implies that $u_nx_n=y_{f^{-1}(n)}$ for some invertible monoid element $u_n$, as desired.

For  
the converse,
given  $N$-tuples $(x_n)$ and  $(u_n)$ such that each $u_n$ is invertible, and 
given any permutation $f \in S_N$, it follows readily that the morphism
$\hat f\colon(x_n) \to (u_{f(n)} x_{f(n)})$ is a well-defined morphism in $\catf(A)$ with inverse $\hat{f}^{-1}=\widehat{f^{-1}}$.
\end{proof}

\begin{disc}\label{disc171015b}
Assume that $A$ is a divisibility monoid, and
consider a morphism $\hat{f}\colon(x_n) \to (y_m)$ 
in $\catf(A)$. Assume for this paragraph that $(y_m)\neq\emptytuple$.
Therefore, $(x_n)\neq\emptytuple$ as well by Proposition~\ref{empty_tuple_homset}.
The divisibility condition making $\hat f$ a morphism in $\catf(A)$ says that
$x_n\mid\prod_{m\in f^{-1}(n)}y_m$ for each $n\in[N]$, so there is an element $r_n\in A$ such that
$r_nx_n=\prod_{m\in f^{-1}(n)}y_m$.
In particular, the element $r:=\prod_nr_n\in A$ satisfies
$$\textstyle
r\prod_{n=1}^Nx_n=\prod_{n=1}^Nr_n\cdot\prod_{n=1}^Nx_n=\prod_{n=1}^Nr_nx_n=\prod_{n=1}^N\prod_{m\in f^{-1}(n)}y_m=\prod_{m=1}^My_m.$$
Moreover, the fact that $A$ is cancellative implies that the elements $r_n$ and $r$ are unique in $A$ with these properties.

In the case $(y_m)\neq\emptytuple$, we again have a unique $r\in A$ such that 
$r\prod_{n=1}^Nx_n=\prod_{m=1}^My_m=1$; moreover, $r$ is invertible in $A$ in this case. Indeed, Proposition~\ref{empty_tuple_homset}
implies that $\prod_{n=1}^Nx_n$ is invertible, so $r=(\prod_{n=1}^Nx_n)^{-1}$ does the job.
\end{disc}

Next, we formalize a decomposition result for morphisms suggested in Example~\ref{basic-example}.
In light of Proposition~\ref{empty_tuple_homset}, the non-empty assumption on $(x_n)$ and $(y_m)$
is not much of a restriction.

\begin{defn}\label{defn170715b}
Assume that $A$ is a divisibility monoid.
A morphism $\hat f\colon(x_n)\to (y_m)$  in $\catf(A)$ between non-empty tuples
is a \emph{factorization morphism} provided that the underlying function $f\colon[M]\to[N]$ is surjective
and for all $n\in [N]$ we have $x_n=\prod_{m\in f^{-1}(n)}y_n$.
A \emph{divisibility morphism} is any morphism $\widehat{\id_{[P]}}\colon (z_p)\to (a_pz_p)$ with $P\geq 1$ and $(a_p)\in\catf(A)$.
(See~\ref{basic-example} for specific examples.)
\end{defn}

\begin{prop}\label{prop170423a}
Let $A$ be a divisibility monoid.
Let $\hat f\colon(x_n)\to (y_m)$ be a morphism in $\catf(A)$ between non-empty tuples.
Then $\hat f$ decomposes as the composition 
$$(x_n)\xra{\hat\epsilon}(z_p)\xra{\widehat{\id_{[P]}}}(a_pz_p)
\xra{\hat\phi}(y_m)$$
where we have the following:
\begin{enumerate}[\rm(1)]
\item $\epsilon$ is injective, 
and 
$\hat\epsilon\colon(x_n)\to(z_p)$
corresponds to dropping invertible elements and permuting the non-dropped elements;
\item $P=|\im(f)|$, and $\widehat{\id_{[P]}}\colon(z_p)\to(a_pz_p)$ is a divisibility morphism; and
\item $\phi$ is surjective, and $\hat\phi\colon(a_pz_p)\to(y_m)$ is a factorization morphism.
\end{enumerate}
Following this notation, we have $\prod_my_m=\left(u\prod_pa_p\right)\left(\prod_nx_n\right)$ where $u$ is the product of the
invertible elements dropped by $\hat\epsilon$.
That is, in the notation of Remark~\ref{disc171015b}, we have $r=u\prod_pa_p$.
\end{prop}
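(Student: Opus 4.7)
The plan is to define $(z_p)$, $(a_p z_p)$, $\hat\epsilon$, and $\hat\phi$ directly from the underlying data of $\hat f$, then verify both the composition and the product identity.

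First I would identify $S := \im(f) \subseteq [N]$, the set of indices with non-empty fibre under $f$. For $n \notin S$ the order-constrained condition on $\hat f$ reads $x_n \leq \prod_{m \in \emptyset} y_m = 1$, hence $x_n \mid 1$; since $A$ is a divisibility monoid, $x_n$ is invertible. Setting $P := |S|$ and listing $S = \{n_1 < n_2 < \cdots < n_P\}$, I would put $z_p := x_{n_p}$ and define $\epsilon\colon [P] \to [N]$ by $\epsilon(p) = n_p$. This $\epsilon$ is injective and order-constrained: for $n = n_p$ the fibre $\epsilon^{-1}(n) = \{p\}$ gives product $z_p = x_n$, while for $n \notin S$ the fibre is empty and the required divisibility $x_n \mid 1$ holds by invertibility. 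Thus $\hat\epsilon\colon (x_n) \to (z_p)$ is a well-defined morphism with the stated ``drop and permute'' behaviour.

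Next, for each $p \in [P]$, the order-constrained condition on $\hat f$ gives $z_p = x_{n_p} \mid \prod_{m \in f^{-1}(n_p)} y_m$, so by cancellativity there is a unique $a_p \in A$ with $a_p z_p = \prod_{m \in f^{-1}(n_p)} y_m$; these data yield the divisibility morphism $\widehat{\id_{[P]}}\colon (z_p) \to (a_p z_p)$ in the sense of Definition~\ref{defn170715b}. I would then define $\phi\colon [M] \to [P]$ by letting $\phi(m)$ be the unique $p$ with $f(m) = n_p$ (well-defined since $f(m) \in S$). Surjectivity of $\phi$ follows from $\im(f) = S$, and since $\phi^{-1}(p) = f^{-1}(n_p)$, the equality $\prod_{m \in \phi^{-1}(p)} y_m = a_p z_p$ holds by construction; hence $\hat\phi\colon (a_p z_p) \to (y_m)$ is a factorization morphism.

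To conclude, I would verify $\hat\phi \circ \widehat{\id_{[P]}} \circ \hat\epsilon = \hat f$ using the composition rule $\hat g \circ \hat h = \widehat{h \circ g}$: it suffices to check $\epsilon \circ \phi = f\colon[M]\to[N]$, which is immediate since $(\epsilon \circ \phi)(m) = \epsilon(\phi(m)) = n_{\phi(m)} = f(m)$. For the product identity I would use the partition $[M] = \bigsqcup_{p} f^{-1}(n_p)$ to compute
\[
\textstyle\prod_m y_m = \prod_p \prod_{m \in f^{-1}(n_p)} y_m = \prod_p (a_p z_p) = \bigl(\prod_p a_p\bigr)\bigl(\prod_p z_p\bigr),
\]
and then relate $\prod_p z_p$ to $\prod_n x_n$ via the product $u$ of the dropped invertible elements $\{x_n : n \notin S\}$, yielding the stated formula; the identification $r = u\prod_p a_p$ in the sense of Remark~\ref{disc171015b} follows from the uniqueness of $r$ there.

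The main obstacle will be bookkeeping: in particular, keeping the ``backwards'' composition convention $\hat g \circ \hat f = \widehat{f \circ g}$ straight, tracking the reindexing $p \leftrightarrow n_p$, and ensuring the invertible factor $u$ enters the final identity with the correct exponent. No genuinely hard step is involved; the construction is essentially forced by the order-constrained condition, and cancellativity together with invertibility of $u$ takes care of the remaining algebra.
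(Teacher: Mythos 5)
Your proposal is correct and follows essentially the same route as the paper's proof: the same construction of $\epsilon$ from the ordered listing of $\im(f)$, the same $\phi$ with $\phi^{-1}(p)=f^{-1}(n_p)$, the same extraction of the $a_p$ from the order-constrained condition, and the same verification via $\epsilon\circ\phi=f$ and the product computation. The only cosmetic quibble is that existence of $a_p$ comes from the divisibility relation while cancellativity gives only its uniqueness, but this does not affect the argument.
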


Before proving this result, we present an example illustrating the proof.

\begin{ex}\label{ex170423a}
We consider the divisibility monoid $\bbz\setminus\{0\}$
and the morphism $(6,1,35)\xra{\hat f}(2,7,33,65)$ corresponding to the divisibility conditions
$6\mid 2\cdot 33$ and $35\mid 7\cdot 65$, along with the fact that $1$ divides the empty product.
Note that $\hat f$ drops the invertible element $1$ in $(6,1,35)$, so we start our factorization as
$$(6,1,35)\xra{\hat\epsilon}(6,35)\xra{\hat\phi^{(6,35)}_{(2,7,33,65)}}(2,7,33,65)$$
where $\hat\epsilon$ drops the invertible element, and $\hat\phi^{(6,35)}_{(2,7,33,65)}$ corresponds to the divisibility conditions
$6\mid 2\cdot 33$ and $35\mid 7\cdot 65$.
Recall the superscript/subscript notation for $\hat\phi$ from Definition~\ref{defn180217a}.

Next, we write these divisibility conditions as $11\cdot 6=2\cdot 33$ and $13\cdot 35=7\cdot 65$, 
allowing us to rewrite $\hat f$ in the form $\hat\phi^{(6\cdot 11,35\cdot 13)}_{(2,7,33,65)}\circ\widehat{\id_{[2]}}\circ\hat\epsilon$ as follows
$$(6,1,35)\xra{\hat\epsilon}(6,35)\xra{\widehat{\id_{[2]}}}(6\cdot 11,35\cdot 13)\xra{\hat\phi^{(6\cdot 11,35\cdot 13)}_{(2,7,33,65)}}(2,7,33,65)$$
where $\widehat{\id_{[2]}}$ corresponds to the divisibility conditions $6\mid 6\cdot 11$ and $35\mid 35\cdot 13$,
and $\hat\phi^{(6\cdot 11,35\cdot 13)}_{(2,7,33,65)}$ corresponds to the factorization $6\cdot 11=2\cdot 33$ and $35\cdot 13=7\cdot 65$.

Formally, we have $P=2$ and
\begin{align*}
[4]\xra{f}[3]
&&1\mapsto 1
&&2\mapsto 3
&&3\mapsto 1
&&4\mapsto 3
\\
[2]\xra{\epsilon}[3]
&&1\mapsto 1
&&2\mapsto 3
\\
[4]\xra{\phi}[2]
&&1\mapsto 1
&&2\mapsto 2
&&3\mapsto 1
&&4\mapsto 2
\end{align*}
\end{ex}

\begin{proof}[Proof of Proposition~\ref{prop170423a}]
Let $\im(f)=\{n_1,\ldots,n_P\}$ where $n_1<\cdots<n_P$.
Define $\epsilon\colon[P]\to[N]$ by the formula $\epsilon(p)=n_p$.
Note that for each $n\notin\im(f)$, the divisibility condition for $x_n$ by definition says that $x_n$ divides the empty product
so $x_n$ is invertible. 
It follows readily that $\hat\epsilon\colon(x_n)\to(x_{n_1},\ldots,x_{n_P})$ is a morphism in $\catf(A)$
that drops the invertible elements $x_n$ for all $n\notin\im(f)$.

Define $\phi\colon[M]\to[P]$ by the formula $\phi(m)=p$ where $p$ is the unique element of $[P]$ such that $f(m)=n_p$.
By construction, we have $\phi^{-1}(p)=f^{-1}(n_p)$.
Thus, the divisibility condition $x_{n_p}\mid\prod_{m\in f^{-1}(n_p)}y_m$ reads as
$x_{n_p}\mid\prod_{m\in\phi^{-1}(p)}y_m$.
We conclude that $\hat\phi^{(x_{n_p})}_{(y_m)}\colon(x_{n_p})=(x_{n_1},\ldots,x_{n_P})\to(y_m)$ is a morphism in $\catf(A)$.
Furthermore, by construction we have $\epsilon\circ\phi=f$ since $\epsilon(\phi(m))=\epsilon(p)=n_p=f(m)$,
so we have $\hat f=\widehat{\epsilon\circ\phi}=\hat\phi^{(x_{n_p})}_{(y_m)}\circ\hat\epsilon$.

For $p=1,\ldots,P$ set $z_p=x_{n_p}$.
Thus, we have $\hat\epsilon\colon(x_n)\to(z_p)=(x_{n_p})$ and
$\hat\phi^{(z_{p})}_{(y_m)}\colon(z_p)\to(y_m)$.
The divisibility relation $z_p=x_{n_p}\mid\prod_{m\in f^{-1}(n_p)}y_m$ implies that there is an element $a_p\in A$ such that
$a_pz_p=\prod_{m\in f^{-1}(n_p)}y_m$.
Then $\widehat{\id_{[P]}}\colon(z_p)\to(a_pz_p)$ is a morphism (a divisibility morphism) in $\catf(A)$.
Using the same $\phi$ as above, the equality $a_pz_p=\prod_{m\in f^{-1}(n_p)}y_m$ shows that
$\hat\phi^{(a_pz_p)}_{(y_m)}\colon(a_pz_p)\to(y_m)$ is a morphism (a factorization morphism) in $\catf(A)$.
Furthermore, the equality $\id_{[P]}\circ\phi=\phi$ implies that the morphism $\hat\phi^{(x_{n_p})}_{(y_m)}$
factors as $\hat\phi^{(x_{n_p})}_{(y_m)}=\widehat{\id_{[P]}\circ\phi}=\hat\phi^{(a_pz_p)}_{(y_m)}\circ\widehat{\id_{[P]}}$.
Thus, in light of the equality $\hat f=\hat\phi^{(x_{n_p})}_{(y_m)}\circ\hat\epsilon$ from the previous paragraph,
we have the factorization
$\hat f=\hat\phi^{(x_{n_p})}_{(y_m)}\circ\hat\epsilon=\hat\phi^{(a_pz_p)}_{(y_m)}\circ\widehat{\id_{[P]}}\circ\hat\epsilon$, as desired.

Lastly, we compute 
\begin{align*}
\textstyle\prod_my_m
&\textstyle=\prod_pa_pz_p
=\left(\prod_pa_p\right)\left(\prod_pz_p\right)
=\left(\prod_pa_p\right)u\left(\prod_nx_n\right)
\end{align*}
where the steps in this sequence come from the construction of the morphisms $\hat\phi^{(a_pz_p)}_{(y_m)}$, $\widehat{\id_{[P]}}$,
and $\hat\epsilon$, respectively.
\end{proof}

The forward implications in the next result are fairly routine.
However, the converses are a bit surprising, the fact that 
the order-constrained maps
are rich enough to be able to see injectivity and surjectivity of the underlying 
functions.

\begin{thm}
\label{epic_monic_theorem}
Assume that $A$ is a divisibility monoid, and let $\hat{f}\colon(x_n) \to (y_m)$ be a morphism in $\mathcal{F}(A)$. 
\begin{enumerate}[\rm(a)]
\item\label{item170423a}
The function $f$ is injective if and only if the morphism $\hat{f}$ is epic. 
\item\label{item170423b}
The function $f$ is surjective if and only if the morphism $\hat{f}$ is monic. 
\end{enumerate}
\end{thm}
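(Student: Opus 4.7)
The plan is to reformulate epic and monic in $\catf(A)$ as left- and right-cancellability of the underlying function $f$ as a function of sets, using the composition rule $\hat{g}\circ\hat{f}=\widehat{f\circ g}$ from Definition~\ref{defn180217a} together with the fact that two morphisms in $\catf(A)$ with the same source and target are equal iff their underlying functions are equal. Concretely, $\hat f$ is epic iff for every pair of functions $g_1,g_2\colon [P]\to [M]$ arising from morphisms $(y_m)\to (z_p)$ in $\catf(A)$ one has $f\circ g_1=f\circ g_2 \Longrightarrow g_1=g_2$, and dually $\hat f$ is monic iff an analogous right-cancellation holds for functions $h_1,h_2\colon [N]\to [Q]$ arising from morphisms $(w_q)\to(x_n)$. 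The forward directions of both (a) and (b) are then immediate from the standard set-theoretic facts that injective (respectively surjective) functions are left- (respectively right-) cancellative.

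For the converse of (a), I would suppose $f$ is not injective and pick $m_1\neq m_2$ in $[M]$ with $f(m_1)=f(m_2)$. Let $(z_p):=(y_1,\ldots,y_M,y_{m_1},y_{m_2})$ of length $P=M+2$, and define $g_1,g_2\colon [M+2]\to [M]$ by $g_i(p)=p$ for $p\leq M$, together with $g_1(M+1)=m_1$, $g_1(M+2)=m_2$, and $g_2(M+1)=m_2$, $g_2(M+2)=m_1$. All non-trivial order constraints reduce to $y_{m_j}\mid y_{m_j}^2$ or $y_{m_j}\mid y_{m_1}y_{m_2}$, which are automatic in the divisibility monoid $A$, so each $g_i$ determines a morphism $\hat g_i\colon (y_m)\to (z_p)$. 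By construction $g_1\neq g_2$, while $f\circ g_1=f\circ g_2$ because the only inputs on which they disagree are sent by $f$ to the common value $f(m_1)=f(m_2)$. This contradicts $\hat{f}$ being epic.

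For the converse of (b), I would suppose $f$ is not surjective and pick $n_0\in [N]\setminus\im(f)$. The order constraint for $\hat f$ at $n_0$ reads $x_{n_0}\leq 1_A$, which in the divisibility monoid $A$ forces $x_{n_0}$ to be invertible. Let $(w_q):=(x_1,\ldots,x_N,x_{n_0})$ of length $Q=N+1$, and define $h_1,h_2\colon [N]\to [N+1]$ by $h_1(n)=n$ for all $n$, and $h_2(n)=n$ for $n\neq n_0$ with $h_2(n_0)=N+1$. The invertibility of $x_{n_0}$ trivializes the constraints at the empty preimages (each amounting to $x_{n_0}\mid 1_A$), and the remaining constraints are immediate, so each $h_i$ determines a morphism $\hat h_i\colon (w_q)\to (x_n)$. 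These morphisms differ at $n_0$, but $h_1\circ f=h_2\circ f$ since $\im(f)$ avoids $n_0$; this contradicts $\hat{f}$ being monic.

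The crux is not the categorical translation, which is purely formal, but the realizability step in each converse: one must exhibit a set-theoretic failure of cancellation by honest morphisms in $\catf(A)$, whose underlying functions must satisfy the order-constrained conditions. The key tricks are (i) appending duplicates of existing entries so that the relevant divisibility conditions become trivial, and (ii) using the observation that $n_0\notin\im(f)$ automatically forces $x_{n_0}$ to be invertible, which in turn makes the empty-preimage constraints automatic. Both tricks rely on being in a divisibility monoid, which is why this hypothesis appears in the statement.
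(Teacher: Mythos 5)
Your proof is correct and follows essentially the same route as the paper's: the forward directions come from left/right cancellability of injective/surjective functions under the contravariant composition rule, and the converses are proved by explicitly constructing a non-cancelling pair of order-constrained functions whose extra entries carry only trivial divisibility conditions. The only cosmetic differences are that in the epic case you append the duplicates $y_{m_1},y_{m_2}$ where the paper appends a single entry $1$, and you work with a general non-injectivity collision $m_1\neq m_2$ (resp.\ a general $n_0\notin\im(f)$) instead of first permuting to put it at the end.
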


\begin{proof}
\eqref{item170423a} For the forward implication, 
if $f$ is injective then $f$ cancels on the left as a function. Hence, if 
$\hat{g}$ and $\hat{h}$ are morphisms in $\mathcal{F}(A)$ such that
$\hat{g} \circ \hat{f} = \hat{h}\circ \hat{f}$, then 
$\widehat{f \circ g}= \widehat{f \circ h}$, so  $f \circ g = f \circ h$; the injectivity of $f$ implies that $g = h$, so $\hat{g} = \hat{h}$.

For the converse, we assume that $f$ is not injective (in particular, $M\geq 2$) and show that $\hat f$ is not epic. 
Permuting the entries of the tuple $(y_m)$
corresponds to composition of $\hat f$ with an isomorphism, hence this does not affect the question of whether 
$\hat f$ is epic, nor does it change the fact that $f$ is not injective. 
Thus, we assume without loss of generality that $f(M-1)=f(M)$.

To show that $\hat f$ is not epic, we explicitly construct two morphisms $\hat g,\hat h\colon(y_m)\to(y_1,\ldots,y_M,1)$ such that
$\hat g\neq\hat h$ but $\hat g\circ\hat f=\hat h\circ\hat f$.
The morphism $\hat g$ corresponds to the divisibility conditions $y_m\mid y_m$ for all $m<M$ and $y_M\mid 1\cdot y_M$.
In other words, we have $g\colon[M+1]\to [M]$ defined by $g(m)=m$ for all $m\leq M$ and $g(M+1)=M$.
The morphism $\hat h$ corresponds to the divisibility conditions $y_m\mid y_m$ for all $m<M-1$ 
along with $y_{M-1}\mid 1\cdot y_{M-1}$ and $y_M\mid y_M$.
In other words, we have $h\colon[M+1]\to [M]$ defined by $h(m)=m$ for all $m\leq M$ and $h(M+1)=M-1$.

Since $g(M+1)=M\neq M-1=h(M+1)$, we have $g\neq h$ hence $\hat g\neq\hat h$.
On the other hand, we have $f\circ g=f\circ h$. Indeed, for $m\leq M$ we have
$f(g(m))=f(m)=f(h(m))$.
Also, $f(g(M+1))=f(M)=f(M-1)=f(h(M+1))$, so we have $f\circ g=f\circ h$, and thus
$\hat g\circ\hat f=\widehat{f\circ g}=\widehat{f\circ h}
=\hat h\circ\hat f$,
as desired.

\eqref{item170423b}
The forward implication  is established like the forward implication of part~\eqref{item170423a}.
For the converse, we assume that $f$ is not surjective and show that $\hat f$ is not monic. 
Permuting the entries of the tuple $(x_n)$, we assume without loss of generality that $N\notin\im(f)$.
In particular, the divisibility condition for $\hat f$ implies that $x_N$ divides the empty product, so $x_N$ is invertible.

In order to show that $\hat f$ is not monic, we explicitly construct two morphisms $\hat g,\hat h\colon(x_1,\ldots,x_{N},x_{N})\to(x_n)$ such that
$\hat g\neq\hat h$ but $\hat f\circ\hat g=\hat f\circ\hat h$.
The morphism $\hat g$ corresponds to the divisibility conditions $x_n\mid x_n$ for all $n\leq N$ as well as 
the second copy of $x_{N}$ in $(x_1,\ldots,x_{N},x_{N})$ dividing the empty product.
In other words, we have $g\colon[N]\to [N+1]$ defined by $g(n)=n$ for all $n\leq N$.
The morphism $\hat h$ corresponds to the divisibility conditions $x_n\mid x_n$ for  $n<N$ as well as
the first copy of $x_{N}$ in $(x_1,\ldots,x_{N},x_{N})$ dividing the empty product
and the second copy of $x_N$ dividing $x_N$.
In other words, we have $h\colon[N]\to [N+1]$ defined by $h(n)=n$ for all $n< N$ and $h(N)=N+1$.

As in part~\eqref{item170423a}, it is straightforward to show that $g\neq h$ and $g\circ f=h\circ f$, so 
$\hat g\neq\hat h$ and $\hat f\circ\hat g=\hat f\circ\hat h$,
as desired.
\end{proof}

\begin{ex}
With Theorem~\ref{epic_monic_theorem} in hand, it is straightforward to 
say which of the morphisms from Example~\ref{basic-example} are monic and which ones are epic,
when $A$ is a divisibility monoid.
For instance, morphisms $(x_n)\to(y_m)$ that just drop invertible elements correspond to injective functions $[M]\to[N]$,
so these morphisms are epic.
For factorization morphisms, the underlying functions are surjective, so these morphisms are monic.
And the functions underlying divisibility morphisms are bijections, so these morphisms are both epic and monic.
In particular, in the decomposition $\hat f=\hat\phi\circ\widehat{\id_{[P]}}\circ\hat\epsilon$ from Proposition~\ref{prop170423a},
the morphisms $\hat\epsilon$ and $\widehat{\id_{[P]}}$ are epic, while $\widehat{\id_{[P]}}$ and $\hat\phi$ are monic.

Note that the divisibility morphisms therefore give plentiful examples of morphisms in $\catf(A)$ that are both monic and
epic but are not isomorphisms. For instance, in $\catf(\bbz\setminus\{0\})$ the divisibility morphism $(2)\to(6)$
is monic and epic, but not an isomorphism,
by
Theorem~\ref{iso-thm}.
\label{ex:retracts}
\end{ex}

\section{Functors
and Symmetric Monoidal Structure}
\label{sec170415a}

In this section, we investigate several natural functors involving $\catf(A)$,
and we show in Theorem~\ref{prop190106b} that $\catf(A)$ is a symmetric and strict monoidal category,
where $A$ is still a multiplicative, commutative, cancellative, pre-ordered monoid.

\begin{defn}\label{presheaf}
Let $\mathfrak{F}\colon\mathcal{F}(A) \to \catset$ be the contravariant functor defined by mapping an $N$-tuple 
$(x_n)$ to the set $[N]$, and by mapping each morphism $\hat{f}\colon(x_n) \to (y_m)$ to the underlying function 
$f\colon[M] \to [N]$.
\end{defn}

\begin{disc}\label{disc170430a}
It is straightforward to show that $\mathfrak F\colon\mathcal{F}(A) \to \catset$ is indeed a contravariant functor. 
Moreover, it is faithful because
$\mor{\catf(A)}{(x_n)}{(y_m)}$ is defined as a subset of $\mor{\catset}{[M]}{[N]}$.

On the other hand, $\mathfrak F$ is not full in general. 
For instance, 
if $A$ is a divisibility monoid with at least one non-invertible element $y$, then
$\mor{\catf(A)}{(y)}{(1)}=\emptyset\neq\mor{\catset}{[1]}{[1]}$,
so $\mathfrak F$ 
is not
full in this case.

Similarly, $\mathfrak F$ is not representable in general. 
Indeed, continue with 
a divisibility monoid $A$ with at least one non-invertible element $y$, and suppose that
$(z_p)\in\catf(A)$ is such that $[N]=\mathfrak F((x_n))\cong\mor{\catf(A)}{(x_n)}{(z_p)}$ for all $(x_n)$ in
$\catf(A)$. 
In particular, for each $N$-tuple $(x_n)$ in
$\catf(A)$ we must have $|\mor{\catf(A)}{(x_n)}{(z_p)}|=N$.
However, for the $N$-tuple $(1,\ldots,1)$ we have
$|\mor{\catf(A)}{(1,\ldots,1)}{(z_p)}|=|\mor{\catset}{[P]}{[N]}|=N^P$. 
The equation $N=N^P$ implies that $P=1$, so $(z_p)=(z_1)$.
For an arbitrary $1$-tuple $(x)$, the condition
$|\mor{\catf(A)}{(x)}{(z_1)}|=1$ implies that $x\mid z_1$ for all $x\in A$;
for example, we then have $yz_1\mid z_1$, contradicting the assumption that $y$ be non-invertible.
\end{disc}

Next, we consider two functors between $A$ and 
$\catf(A)$.

\begin{disc}\label{disc171015a}
Consider the monoid $A$ as a small category with object set $A$ and morphisms
given by the inequalities in $A$:
\begin{equation*}
\mor{A}xy=
\begin{cases}
\{x\leq y\}&\text{if $x\leq y$}\\
\emptyset&\text{otherwise.}
\end{cases}
\end{equation*}
In other words, $A$ is a small category where every
morphism-set has at most one element. 
With this convention, 
when we feel it is helpful to consider things functorially,
we consider the inclusion functor $\mathcal I\colon\catpom\to\catcat$,
so, for each pre-ordered monoid $B$, we denote by $\mathcal I(B)$ the corresponding small
category where every
morphism-set has at most one element.

As in Corollary~\ref{cor190115a}, the fact that every morphism set in $A$ has at most one element
implies that any diagram
$$\xymatrix{a\ar[r]\ar[rd]&b\ar[d]\\ &c}$$
in $A$ automatically commutes.
\end{disc}

\begin{defn}\label{defn170501b}
Let $\mathfrak{A}_A\colon\mathcal{F}(A) \to A$ be the covariant functor defined
on objects by mapping an $N$-tuple $(x_n)$ to the product
$\prod_n x_n\in A$. 
For each morphism $\hat{f}\colon (x_n) \to (y_m)$, let $\mathfrak{A}_A(\hat f)$ be the pre-order relation $\prod_n x_n \leq \prod_m y_m$
in 
$A$,
which is satisfied because $f$ is order-constrained.
\end{defn}

\begin{disc}\label{disc170430b}
It is straightforward to show that $\mathfrak A_A$ is indeed a covariant functor. 
Unlike our previous functor, though, $\mathfrak A_A$ is neither full nor faithful in general. 
Indeed, consider the divisibility monoid $A=\bbz\setminus\{0\}$.
It is straightforward to show that
$\mor{\catf(\bbz\setminus\{0\})}{(4,9)}{(6,6)}=\emptyset\neq\mor{\bbz\setminus\{0\}}{36}{36}$, so $\mathfrak A_A$ is not full.
Similarly, we have
$|\mor{\catf(\bbz\setminus\{0\})}{(2,2)}{(2,2)}|=2>1=|\mor{\bbz\setminus\{0\}}{4}{4}|$, so $\mathfrak A_A$ is not faithful.
\end{disc}

\begin{defn}\label{defn170501c}
Let $\mathfrak{B}_A\colon A\to\mathcal{F}(A)$ be the covariant functor defined by mapping an element
$x\in A$ to the 1-tuple $(x)$,
and by mapping each order relation $x\leq y$ to the morphism $\widehat{\id_{[1]}}\colon (x)\to(y)$
which is a morphism in $\catf(A)$ by definition.
\end{defn}

\begin{disc}\label{disc170430c}
It is straightforward to show that $\mathfrak B_A$ is indeed a covariant functor. 
Unlike our previous functor, though, we have the following.
\end{disc}

\begin{prop}\label{prop170430a}
The functor $\mathfrak B_A\colon A\to\mathcal{F}(A)$ is  fully faithful.
\end{prop}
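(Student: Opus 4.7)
The plan is to verify directly that for each pair $x,y\in A$ the functor $\mathfrak B_A$ induces a bijection
$$\mathfrak B_A\colon \mor{A}{x}{y}\longrightarrow\mor{\catf(A)}{(x)}{(y)}.$$
The whole argument reduces to a cardinality count on both sides, followed by the observation that there is at most one morphism available in each hom-set, so no nontrivial matching is required.

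First I would unwind the two hom-sets. By the convention of Remark~\ref{disc171015a}, the set $\mor{A}{x}{y}$ has exactly one element, namely the relation $x\leq y$, when $x\leq y$ holds in $A$, and is empty otherwise. On the other side, Proposition~\ref{singletons_unique} applied with both source and target being $1$-tuples (take $N=1$ in formula~\eqref{eq170421b}) gives
$$|\mor{\catf(A)}{(x)}{(y)}|=\begin{cases}1&\text{if }x\leq y\\ 0&\text{otherwise.}\end{cases}$$
Thus the two hom-sets have the same cardinality for every choice of $x$ and $y$.

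Next I would note that $\mathfrak B_A$ does send the unique element of $\mor{A}{x}{y}$ (when it exists) into $\mor{\catf(A)}{(x)}{(y)}$: the morphism $\widehat{\id_{[1]}}\colon(x)\to(y)$ is well-defined precisely because $x\leq y$ makes $\id_{[1]}$ order-constrained with respect to $(x)$ and $(y)$. Since both hom-sets contain at most one element, any function between them that respects (non)emptiness is automatically bijective. Hence $\mathfrak B_A$ is fully faithful.

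There is essentially no obstacle here: the result is a direct reading of Proposition~\ref{singletons_unique} in the degenerate case $N=M=1$, combined with the one-morphism-per-relation structure of $A$ as a category. The only care needed is to confirm that $\mathfrak B_A$ is nonempty-preserving on hom-sets in the trivial way just described, which is immediate from the definition of a morphism in $\catf(A)$.
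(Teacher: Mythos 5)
Your proposal is correct and follows essentially the same route as the paper, which likewise deduces the result directly from formula~\eqref{eq170421b} of Proposition~\ref{singletons_unique}; you simply spell out the cardinality comparison that the paper leaves implicit. No issues.
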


\begin{proof}
This follows directly from 
formula~\eqref{eq170421b} from Proposition~\ref{singletons_unique}.
\end{proof}

\begin{prop}\label{prop170430b}
The functors $\mathfrak A_A$ and $\mathfrak B_A$ are adjoints, that is, for all $y\in A$ and $(x_n)\in\catf(A)$ we have
natural bijections
$$\mor{A}{y}{\mathfrak A_A((x_n))}
\cong\mor{\catf(A)}{\mathfrak B_A(y)}{(x_n)}.$$
In addition, the composition $\mathfrak{A}_A\mathfrak{B}_A\colon A\to A$ is the identity.
\end{prop}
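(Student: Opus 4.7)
The plan is to read off the adjunction bijection directly from the hom-set computation for morphisms out of a $1$-tuple. Since $\mathfrak{B}_A(y) = (y)$, Proposition~\ref{singletons_unique} (applied when $(x_n) \neq \emptytuple$) and Proposition~\ref{empty_tuple_homset} (applied when $(x_n) = \emptytuple$, noting the empty product is $1_A$) together show that $\mor{\catf(A)}{(y)}{(x_n)}$ is a singleton exactly when $y \leq \prod_n x_n$, and empty otherwise. On the other hand, since $A$ is viewed as a category with at most one morphism between any two objects (Remark~\ref{disc171015a}), and since $\mathfrak{A}_A((x_n)) = \prod_n x_n$, the hom-set $\mor{A}{y}{\mathfrak{A}_A((x_n))}$ is also a singleton when $y \leq \prod_n x_n$ and empty otherwise. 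The cardinality match between these thin hom-sets uniquely determines the desired bijection $\Phi_{y,(x_n)}$.

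Having established the bijection pointwise, naturality in both variables is essentially automatic: both families of hom-sets have at most one element, so every naturality square reduces to comparing singletons (or to a diagram vacuously commuting because one vertex is empty). Specifically, for a morphism $y' \leq y$ in $A$, the precomposition squares commute because $y \leq \prod_n x_n$ forces $y' \leq \prod_n x_n$, making each side of the square a singleton determined by this inequality. Similarly, for a morphism $\hat g \colon (x_n) \to (w_k)$ in $\catf(A)$, the postcomposition square collapses to an equality of singletons once we invoke the inequality $\prod_n x_n \leq \prod_k w_k$ supplied by $\mathfrak{A}_A(\hat g)$.

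Finally, for the assertion $\mathfrak{A}_A \circ \mathfrak{B}_A = \id_A$, the computation is immediate: on objects, $\mathfrak{A}_A(\mathfrak{B}_A(y)) = \mathfrak{A}_A((y)) = y$ since the product of the $1$-tuple $(y)$ is $y$, and on morphisms, $\mathfrak{A}_A(\mathfrak{B}_A(y \leq y')) = \mathfrak{A}_A(\widehat{\id_{[1]}}) = (y \leq y')$. The only subtlety worth flagging is the edge-case bookkeeping around $(x_n) = \emptytuple$, where Proposition~\ref{empty_tuple_homset} must be invoked in place of Proposition~\ref{singletons_unique}; once that case is handled, there is no real obstacle, as the entire argument is powered by the thinness of the relevant hom-sets.
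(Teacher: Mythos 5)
Your proof is correct and follows essentially the same route as the paper's: the bijection is read off from formula~\eqref{eq170421b} of Proposition~\ref{singletons_unique}, naturality is automatic because the relevant hom-sets are thin, and the identity $\mathfrak{A}_A\mathfrak{B}_A=\id_A$ is a direct computation. Your explicit handling of the $(x_n)=\emptytuple$ edge case via Proposition~\ref{empty_tuple_homset} is a minor point of extra care that the paper's proof leaves implicit.
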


\begin{proof}
To show that the composition $\mathfrak{A}_A\mathfrak{B}_A$ is the identity on objects, we compute directly:
$\mathfrak{A}_A(\mathfrak{B}_A(y))
=\mathfrak{A}_A((y))=y$. 
To show that $\mathfrak{A}_A\mathfrak{B}_A$ is the identity on morphisms, 
let $\phi\colon y\to z$ be a morphism in $A$.
That is, we have $y\leq z$ in $A$ and the arrow $\phi$ is the unique arrow in $\mor Ayz$.
The composition $\mathfrak{A}_A\mathfrak{B}_A$ maps arrows in $\mor Ayz$ to arrows in $\mor Ayz$,
so $\mathfrak{A}_A(\mathfrak{B}_A(\phi))$ is the unique arrow in $\mor Ayz$,
i.e., we have $\mathfrak{A}_A(\mathfrak{B}_A(\phi))=\phi$, as desired.

The adjoint 
bijection
follows from the next display wherein the equalities
are by definition
and the  
bijection
is from equation~\eqref{eq170421b} in Proposition~\ref{singletons_unique}.\begin{align*}
\mor{A}{y}{\mathfrak A_A((x_n))}
&\textstyle
=
\mor{A}{y}{\prod_nx_n}\\
&
\cong
\mor{\catf(A)}{(y)}{(x_n)}\\
&
=
\mor{\catf(A)}{\mathfrak B_A(y)}{(x_n)}
\end{align*}
The naturality of the bijection here follows as in the preceding paragraph, using the fact that these morphism
sets in $A$ and $\catf(A)$ have at most one element each.
\end{proof}

Our notations for $\mathfrak A_A$ and $\mathfrak B_A$  are intentionally suggestive of the next result.

\begin{prop}\label{prop170430c}
Consider the inclusion functor $\mathcal I\colon\catpom\to\catcat$ 
of
Remark~\ref{disc171015a}
and the category of factorization functor $\catf\colon\catpom\to\catcat$.
Then the operations $\mathfrak A\colon\catf\to\mathcal I$ and $\mathfrak B\colon\mathcal I\to\catf$ are 
natural transformations. 
\end{prop}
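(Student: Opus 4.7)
The plan is to unwind the definitions and verify naturality of $\mathfrak A$ and $\mathfrak B$ component-wise. Fix a morphism $\phi\colon A\to B$ in $\catpom$. Naturality of $\mathfrak A$ amounts to showing that the square of functors
\[\mathcal I(\phi)\circ\mathfrak A_A=\mathfrak A_B\circ\catf(\phi)\colon\catf(A)\to\mathcal I(B)\]
commutes, and similarly for $\mathfrak B$ we must verify
\[\catf(\phi)\circ\mathfrak B_A=\mathfrak B_B\circ\mathcal I(\phi)\colon\mathcal I(A)\to\catf(B).\]
In each case I check the equality on objects and then on morphisms, using the explicit formulas for $\catf(\phi)$, $\mathfrak A_\bullet$, and $\mathfrak B_\bullet$ from Proposition~\ref{prop170430d} and Definitions~\ref{defn170501b} and~\ref{defn170501c}.

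For $\mathfrak A$, I would first compare the two routes on an object $(x_n)\in\catf(A)$: both yield $\prod_n\phi(x_n)$, since $\phi$ is a monoid homomorphism. For morphisms, I would take $\hat f^{(x_n)}_{(y_m)}\colon(x_n)\to(y_m)$ in $\catf(A)$ and observe that each of $\mathcal I(\phi)\circ\mathfrak A_A$ and $\mathfrak A_B\circ\catf(\phi)$ sends it to a morphism in $\mor{\mathcal I(B)}{\prod_n\phi(x_n)}{\prod_m\phi(y_m)}$. By Remark~\ref{disc171015a} this morphism set has at most one element, so the two images automatically agree once we know they exist; and they do exist because $\phi$ respects the pre-order, so the relation $\prod_nx_n\leq\prod_my_m$ given by $\mathfrak A_A(\hat f)$ maps through $\phi$ to $\prod_n\phi(x_n)\leq\prod_m\phi(y_m)$.

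For $\mathfrak B$, on an object $y\in\mathcal I(A)$ both routes yield the 1-tuple $(\phi(y))\in\catf(B)$. On a morphism $y\leq z$ in $\mathcal I(A)$, each route produces the morphism $\widehat{\id_{[1]}}\colon(\phi(y))\to(\phi(z))$ in $\catf(B)$: the top-right route produces it by first sending $y\leq z$ to $\widehat{\id_{[1]}}\colon(y)\to(z)$ via $\mathfrak B_A$ and then applying $\catf(\phi)$, while the bottom-left route first sends $y\leq z$ to $\phi(y)\leq\phi(z)$ via $\mathcal I(\phi)$ and then applies $\mathfrak B_B$. Since these two morphisms have the same underlying function on $[1]$, they are equal in $\catf(B)$.

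There is no real obstacle here: the whole proof is a bookkeeping exercise made easy by the fact that morphism-sets in $\mathcal I(A)$ and $\mathcal I(B)$ are at most singletons, which removes any potential ambiguity on the $\mathfrak A$-side, and by the fact that both $\mathfrak B_\bullet$ and $\catf(\phi)$ act by the same underlying identity function on $[1]$, which handles the $\mathfrak B$-side. The main care needed is to keep the several notational layers ($\hat f$ versus $f$, $\mathcal I(\phi)$ versus $\phi$, the superscript/subscript notation $\hat f^{(x_n)}_{(y_m)}$) straight throughout the verification.
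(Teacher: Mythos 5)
Your proposal is correct and follows essentially the same route as the paper: both verify the naturality squares for $\mathfrak A$ and $\mathfrak B$ componentwise, with the paper displaying only the object-level diagrams and leaving the morphism-level check implicit. Your explicit treatment of morphisms — using that $\mor{\mathcal I(B)}{-}{-}$ is at most a singleton for the $\mathfrak A$-side and that both routes yield $\widehat{\id_{[1]}}$ for the $\mathfrak B$-side — is a slightly more careful write-up of the same argument.
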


\begin{proof}
First, observe that for each pre-ordered monoid $A$, we have $\mathcal I(A)=A$, and the operations
$\mathfrak{A}_A\colon\mathcal{F}(A) \to \mathcal I(A)$ and
$\mathfrak{B}_A\colon \mathcal I(A)\to\mathcal{F}(A)$ are covariant functors, that is, they are morphisms in $\catcat$.
Furthermore, for each morphism $\phi\colon A\to B$ in $\catpom$, we see next that
the following diagram commutes
\begin{gather*}
\xymatrix{
A\ar[r]^-{\mathfrak B_A}
\ar[d]_{\phi}
&\catf(A)
\ar[r]^-{\mathfrak A_A}
\ar[d]_{\catf(\phi)}
&A
\ar[d]_{\phi}
\\
B\ar[r]_-{\mathfrak B_B}
&\catf(B)
\ar[r]_-{\mathfrak A_B}
&B
}\\
\xymatrix{
y\ar@{|->}[r]\ar@{|->}[d]
&(y)\ar@{|->}[d]
&(x_n)\ar@{|->}[r]\ar@{|->}[d]
&\textstyle\prod_{n}x_n\ar@{|->}[d]
\\
\phi(y)\ar@{|->}[r]
&(\phi(y))
&(\phi(x_n))\ar@{|->}[r]
&\textstyle\prod_{n}\phi(x_n)
}
\end{gather*}
so we indeed have natural transformations.
\end{proof}

We use the following 
concatenation tensor product in our  treatment of weak divisibility and 
weak primeness in Sections~\ref{sec170708a}, \ref{sec170702a}, and~\ref{accp-implies-atomic}
below. 
Proposition~\ref{prop170626a} and Theorem~\ref{prop190106b} document some of its properties.

\begin{defn}\label{defn170605a}
Fix non-empty tuples $(x_n)$ and $(y_m)$ in $\catf(A)$, and define  
their concatenation tensor product
as
$$(x_n)\otimes(y_m)=(x_1,\ldots,x_N,y_1,\ldots,y_M).$$
We also define $(x_n)\otimes\emptytuple=(x_n)=\emptytuple\otimes(x_n)$ and
$\emptytuple\otimes\emptytuple=\emptytuple$.
(In other words, the tensor product here is the binary operation on the free monoid on $A$.)
Note that this makes the length of the tensor product of two tuples equal to the sum of the lengths of the factors.
Note further that the tuples $(x_n)\otimes(y_m)$ and $(y_m)\otimes(x_n)$ are isomorphic in $\catf(A)$,
by permuting the entries, and similarly for tensor products involving $\emptytuple$.

We now make 
this into a bifunctor $\catf(A)\times\catf(A)\to\catf(A)$.
Let $\hat f\colon (x_n)\to (w_q)$ and $\hat g\colon (y_m)\to (z_p)$ be morphisms in $\catf(A)$ between non-empty tuples.
Intuitively, the induced morphism 
$$(x_n)\otimes(y_m)=(x_1,\ldots,x_N,y_1,\ldots,y_M)\to(w_1,\ldots,w_Q,z_1,\ldots,z_P)=(w_q)\otimes(z_p)$$
is given by the  
conditions  $x_n\leq\prod_{q\in f^{-1}(n)}w_q$ and 
$y_m\leq\prod_{p\in g^{-1}(m)}z_p$ coming from the fact that $\hat f$ and $\hat g$ are morphisms in $\catf(A)$.
Formally, the induced morphism $\hat f\otimes\hat g\colon(x_n)\otimes(y_m)\to (w_q)\otimes(z_p)$ is defined to be
$\widehat{f*g}$ where $f*g\colon[Q+P]\to[N+M]$ is defined as
$(f*g)(q)=f(q)$ for $q\in [Q]$, and $(f*g)(Q+p)=N+g(p)$ for $p\in[P]$.
Following the intuitive description of $\hat f\otimes\hat g$ above, one checks readily that the fact that $\hat f$ and $\hat g$ are morphisms in $\catf(A)$
implies that $\hat f\otimes\hat g=\widehat{f*g}$ is also a morphism.
Note that this formal definition also applies to morphisms with domain or codomain $\emptytuple$, using $M=0$, $P=0$, $N=0$, or $Q=0$,
as necessary.
It is straightforward to show that this gives $\emptytuple\otimes\hat f=\hat f=\hat f\otimes \emptytuple$.
\end{defn}

\begin{prop}\label{prop170626a}
The above tensor product is a bifunctor $\catf(A)\times\catf(A)\to\catf(A)$ such that
$\emptytuple\otimes-=-\otimes\emptytuple$ is the identity functor.  
\end{prop}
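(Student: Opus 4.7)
The plan is to verify the standard functoriality axioms directly from the formula for $f*g$ given in Definition~\ref{defn170605a}, and then to read off the identity functor claim from the convention $\emptytuple\otimes\hat f=\hat f=\hat f\otimes\emptytuple$.

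First I would treat the case where none of the tuples are empty; the general case then follows by applying the edge-case convention. To check that $\otimes$ preserves identities, note that if $f=\id_{[N]}$ and $g=\id_{[M]}$, then the formula $(f*g)(q)=f(q)=q$ for $q\in[N]$ and $(f*g)(N+p)=N+g(p)=N+p$ for $p\in[M]$ shows that $f*g=\id_{[N+M]}$, so $\widehat{\id_{[N]}}\otimes\widehat{\id_{[M]}}=\widehat{\id_{[N+M]}}=\id_{(x_n)\otimes(y_m)}$.

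The main step is to verify preservation of composition: given morphisms $\hat f\colon(x_n)\to(w_q)$, $\hat f'\colon(w_q)\to(a_i)$, $\hat g\colon(y_m)\to(z_p)$, and $\hat g'\colon(z_p)\to(b_j)$, one must show
\[(\hat f'\otimes\hat g')\circ(\hat f\otimes\hat g)=(\hat f'\circ\hat f)\otimes(\hat g'\circ\hat g).\]
Using the composition rule $\hat g\circ\hat f=\widehat{f\circ g}$ from Definition~\ref{defn180217a}, this reduces to the purely combinatorial identity $(f*g)\circ(f'*g')=(f\circ f')*(g\circ g')$ of functions $[I+J]\to[N+M]$, where $I,J$ are the lengths of $(a_i),(b_j)$. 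For $i\in[I]$ one computes $(f*g)((f'*g')(i))=(f*g)(f'(i))=f(f'(i))$, while for $I+j$ with $j\in[J]$ one gets $(f*g)(Q+g'(j))=N+g(g'(j))$. These agree with $((f\circ f')*(g\circ g'))$ applied to $i$ and $I+j$, respectively, by the very definition of the $*$ operation. The potential pitfall here is keeping the direction of composition straight (since morphism composition in $\catf(A)$ uses the reverse order of the underlying functions), but the calculation goes through cleanly.

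Finally, the claim that $\emptytuple\otimes-=-\otimes\emptytuple$ equals the identity functor on $\catf(A)$ is immediate: the closing sentences of Definition~\ref{defn170605a} declare $\emptytuple\otimes(x_n)=(x_n)=(x_n)\otimes\emptytuple$ on objects and $\emptytuple\otimes\hat f=\hat f=\hat f\otimes\emptytuple$ on morphisms, and these assignments visibly coincide with the identity functor. The only obstacle worth flagging is bookkeeping with the boundary cases of the $*$-formula when some of $N,M,P,Q$ is zero, but in each such case the formula collapses to the appropriate identity by the stated convention.
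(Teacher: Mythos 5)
Your proposal is correct and follows essentially the same route as the paper's proof: verify that $*$ sends identity functions to identity functions, reduce preservation of composition to the combinatorial identity $(f*g)\circ(f'*g')=(f\circ f')*(g\circ g')$ checked on the two blocks of the index set, and read off the unit claim directly from the conventions in Definition~\ref{defn170605a}. The orientation of the composition (morphisms composing via the reversed composite of underlying functions) is handled correctly, so nothing is missing.
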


\begin{proof}
The operation $\emptytuple\otimes-=-\otimes\emptytuple$ is the identity functor, by definition.
To see that $\otimes$ respects identity morphisms $\widehat{\id_{[N]}}\colon(x_n)\to(x_n)$ and $\widehat{\id_{[M]}}\colon(y_m)\to(y_m)$,
notice that $\id_{[N]}*\id_{[M]}=\id_{[N+M]}\colon[N+M]\to[N+M]$, and so 
$$\id_{(x_n)}\otimes\id_{(y_m)}=\widehat{\id_{[N]}}\otimes\widehat{\id_{[M]}}=\widehat{\id_{[N+M]}}=\id_{(x_n)\otimes(y_m)}.$$
To check that this respects compositions, consider morphisms
$(x_n)\xra{\hat f}(w_q)\xra{\hat h}(u_j)$ and $(y_m)\xra{\hat g}(z_p)\xra{\hat k}(v_i)$.
Then $\hat f\otimes\hat g=\widehat{f*g}$ and $\hat h\otimes\hat k=\widehat{h*k}$.
We show that $(f*g)\circ(h*k)=(f\circ h)*(g\circ k)$.
To this end, for $j\in [J]$ the first equality in the next display is by definition,
as is the second one using the condition $h(j)\in [Q]$, and similarly for the other equalities.
\begin{align*}
(f*g)((h*k)(j))
&=(f*g)(h(j))
=f(h(j))\\
&=(f\circ h)(j)
=((f\circ h)*(g\circ k))(j)
\\
(f*g)((h*k)(J+i))
&=(f*g)(Q+k(i))
=N+g(k(i))\\
&=N+(g\circ k)(i)
=((f\circ h)*(g\circ k))(J+i)
\end{align*}
This explains the third equality in the next display
\begin{align*}
(\hat h\otimes\hat k)\circ(\hat f\otimes\hat g)
&=\widehat{h*k}\circ\widehat{f*g}
=[(f*g)\circ (h*k)]^\wedge
=[(f\circ h)*(g\circ k)]^\wedge\\
&=\widehat{f\circ h}\otimes \widehat{g\circ k}
=(\hat h\circ\hat f)\otimes(\hat k\circ\hat g)
\end{align*}
and the other equalities are by definition.
\end{proof}

\begin{thm}\label{prop190106b}
With the above tensor product, $\catf(A)$ is a symmetric and strict monoidal category. 
\end{thm}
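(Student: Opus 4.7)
The plan is to verify the axioms of a symmetric strict monoidal category $(\catf(A),\otimes,\emptytuple)$: (a) $\otimes$ is a bifunctor with $\emptytuple$ as a strict unit; (b) strict associativity on objects and morphisms; (c) a natural symmetry isomorphism $\sigma_{(x_n),(y_m)}\colon(x_n)\otimes(y_m)\to(y_m)\otimes(x_n)$ satisfying $\sigma_{(y_m),(x_n)}\circ\sigma_{(x_n),(y_m)}=\id$; and (d) the hexagon coherence identity. Proposition~\ref{prop170626a} already provides (a) and the strict unit laws; in the strict setting, the pentagon and triangle axioms are automatic.

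For strict associativity on objects, the two triple concatenations $(x_n)\otimes((y_m)\otimes(z_p))$ and $((x_n)\otimes(y_m))\otimes(z_p)$ literally coincide as the single tuple $(x_1,\ldots,x_N,y_1,\ldots,y_M,z_1,\ldots,z_P)$. On morphisms, I would verify that $(f_1*f_2)*f_3=f_1*(f_2*f_3)$ as functions between the relevant three-block disjoint unions of index sets, which follows directly from the offset recipe in Definition~\ref{defn170605a} by checking each block separately.

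I would construct the symmetry by setting $\sigma_{(x_n),(y_m)}=\hat\tau$, where $\tau\colon[M+N]\to[N+M]$ is the block-swap bijection given by $\tau(k)=N+k$ for $k\in[M]$ and $\tau(M+n)=n$ for $n\in[N]$. Each fiber $\tau^{-1}(i)$ is a singleton whose lone element picks out an entry of $(y_m)\otimes(x_n)$ equal to the $i$-th entry of $(x_n)\otimes(y_m)$, so $\tau$ is order-constrained by reflexivity of $\leq$ and $\hat\tau$ is a well-defined morphism. The analogous block-swap in the opposite direction provides an inverse, and composing the two underlying permutations yields the identity on the relevant finite set; this translates via the rule $\hat g\circ\hat f=\widehat{f\circ g}$ into $\sigma_{(y_m),(x_n)}\circ\sigma_{(x_n),(y_m)}=\id$. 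Naturality reduces to checking that the block-swap permutations appropriately intertwine $f*g$ and $g*f$ on the disjoint-union index sets, which I would verify block by block.

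The main obstacle is the hexagon, which, since associators are identities, amounts to the equality $\sigma_{(x_n),(y_m)\otimes(z_p)}=(\id_{(y_m)}\otimes\sigma_{(x_n),(z_p)})\circ(\sigma_{(x_n),(y_m)}\otimes\id_{(z_p)})$. Unwinding both sides through the definitions of $*$ and the reversed composition convention, each side reduces to the single block permutation that moves the $(x_n)$-block past the concatenation $(y_m)\otimes(z_p)$, so agreement follows by a three-block case analysis on the underlying permutations. The pitfalls are bookkeeping with the reversed composition convention and keeping track of which index set belongs to which tuple, but the argument is ultimately a routine (if tedious) computation once the combinatorics are laid out carefully.
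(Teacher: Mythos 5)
Your proposal is correct and follows essentially the same route as the paper: the strict associativity and unit laws hold on the nose by the definition of concatenation, the symmetry is the same block-swap permutation (the paper's $\zeta_{M,N}$ is exactly your $\tau$, order-constrained by reflexivity since its fibers are singletons picking out equal entries), and the hexagon reduces to the same three-block computation on underlying functions under the reversed composition convention.
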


\begin{proof}
We have already seen that $\emptytuple\otimes-=-\otimes\emptytuple$ is the identity functor.
Also, it follows by definition that $(x_n)\otimes[(y_m)\otimes(z_p)]=[(x_n)\otimes(y_m)]\otimes(z_p)$.
From this, it follows readily that the following diagrams commute
$$\xymatrix{
[[(x_n)\otimes(y_m)]\otimes(z_p)]\otimes(w_q)
\ar[r]^-{\id\otimes \id}_-{=\id}\ar[dd]_\id
&[(x_n)\otimes[(y_m)\otimes(z_p)]]\otimes(w_q)\ar[d]^\id
\\
&(x_n)\otimes[[(y_m)\otimes(z_p)]\otimes(w_q)]\ar[d]^{\id\otimes \id=\id}
\\
[(x_n)\otimes(y_m)]\otimes[(z_p)\otimes(w_q)]\ar[r]^-\id
&(x_n)\otimes[(y_m)\otimes[(z_p)\otimes(w_q)]]
\\
[(x_n)\otimes\emptytuple]\otimes(z_p)\ar[r]^\id\ar[rd]_{\id\otimes \id=\id}
&(x_n)\otimes[\emptytuple\otimes(z_p)]\ar[d]^{\id\otimes \id=\id}
\\
&(x_n)\otimes(z_p)
}$$
so $\catf(A)$ is a strict monoidal category.

For the symmetric structure, we need to define commutativity isomorphisms
$B_{(x_n),(y_m)}\colon(x_n)\otimes(y_m)\to (y_m)\otimes(x_n)$ as in the next display
$$(x_n)\otimes(y_m)=(x_1,\ldots,x_N,y_1,\ldots,y_M)\to(y_1,\ldots,y_M,x_1,\ldots,x_N)=(y_m)\otimes(x_n).$$
Formally, this is the  morphism 
$B_{(x_n),(y_m)}=\widehat{\zeta_{M,N}}\colon(x_n)\otimes(y_m)\to (y_m)\otimes(x_n)$ 
where $\zeta_{M,N}\colon[M+N]\to[N+M]$
is defined to be $\zeta_{M,N}(m)=N+m$ and $\zeta_{M,N}(M+n)=n$ for $m\in[M]$ and $n\in[N]$. 
Note that this makes sense even when $N=0$ or $M=0$.
It is straightforward to show that 
$\widehat{\zeta_{M,N}}$ is a morphism in $\catf(A)$, 
and furthermore that $\zeta_{N,M}\circ\zeta_{M,N}=\id_{[M+N]}$ so
$$\widehat{\zeta_{M,N}}\circ\widehat{\zeta_{N,M}}=[\zeta_{N,M}\circ\zeta_{M,N}]^\wedge=\widehat{\id_{[M+N]}}=\id_{(y_m)\otimes(x_n)}.$$
As in the proof of Proposition~\ref{prop170626a},  the next diagram  commutes
$$\xymatrix{
[(N+M)+P]
&[N+(M+P)]\ar[l]_-\id
&[(M+P)+N]\ar[l]_-{\zeta_{M+P,N}}
\\
[(M+N)+P]\ar[u]^-{\zeta_{M,N}*\id}
&[M+(N+P)]\ar[l]_-\id
&[M+(P+N)]\ar[l]_-{\id*\zeta_{P,N}}\ar[u]_-\id
}$$
which yields the following commutative hexagonal diagram
$$\xymatrix{
[(x_n)\otimes(y_m)]\otimes(z_p)\ar[r]^-\id\ar[d]_{\widehat{\zeta_{M,N}}\otimes \id}^{=B\otimes\id}
&(x_n)\otimes[(y_m)\otimes(z_p)]\ar[r]^-{\widehat{\zeta_{M+P,N}}}_-{=B}
&[(y_m)\otimes(z_p)]\otimes(x_n)\ar[d]^-\id
\\
[(y_m)\otimes(x_n)]\otimes(z_p)\ar[r]^-\id
&(y_m)\otimes[(x_n)\otimes(z_p)]\ar[r]^-{\id\otimes\widehat{\zeta_{P,N}}}_-{=\id\otimes B}
&(y_m)\otimes[(z_p)\otimes(x_n)]
}$$
thus completing the proof.
\end{proof}

\section{Weak Equivalences}\label{model-category}

\begin{assumption}
In this section, $A$ is a divisibility monoid.
\end{assumption}

Here we identify an important set of morphisms in $\catf(A)$, denoted
$\catw(A)$.
We then prove in Theorem~\ref{thm170501a} that the associated 
category of fractions $\catf(A)[\catw(A)^{-1}]$
is (naturally isomorphic to) $A$.
See~\cite{MR1406095,MR0210125} for background on this construction.
Then we exhibit some general properties of the set $\catw(A)$, e.g., 
Propositions~\ref{wk_equiv} and~\ref{prop190106a} show that $\catw(A)$ is a collection of weak equivalences
and admits a calculus of fractions.

Our definition for the set $\catw(A)$
is motivated by the 
fact that, for elements $a,b\in A$, the natural morphisms $(1,ab)\to(ab)\to(a,b)$ are not isomorphisms,
even though they do not carry any new factorization information about $A$.

\begin{defn}\label{defn170501d}
Let $\catw(A)$ be the set of morphisms $\hat{f}\colon(x_n) \to (y_m)$ in $\mathcal{F}(A)$ 
such that either (1) we have $(y_m)=\emptytuple$, or (2) we have $(y_m)\neq\emptytuple$, hence $(x_n)\neq\emptytuple$, 
and for each $n \in [N]$,
the element $r_n\in A$ from Remark~\ref{disc171015b} satisfying $r_nx_n =  \prod_{m \in f^{-1}(n)} y_m$
is  invertible in $A$.
\end{defn}

\begin{disc}\label{disc170501a}
If 
and $\hat\epsilon\colon(x_n)\to(z_p)$
corresponds to dropping invertible elements from $(x_n)$ and permuting the non-dropped elements,
then $\hat\epsilon\in\catw(A)$ because $r_n=1$ for each $n\in\im(\epsilon)$ and $r_n=x_n^{-1}$ for each $n\in[N]\setminus\im(\epsilon)$.
Similarly, 
every factorization morphism
is in $\catw(A)$.
Each isomorphism
is in $\catw(A)$, and a divisibility morphism is in $\catw(A)$ if and only if it is an isomorphism, by Theorem~\ref{iso-thm}.
\end{disc}

Next we show some of how the morphisms in $\catw(A)$ can see factorization properties in $A$.
We revisit this theme in subsequent sections.

\begin{lem}\label{lem190110a} 
For an element $r \in A$, the following conditions are equivalent.
\begin{enumerate}[\rm(i)]
\item \label{lem190110a1}
$r$ is invertible in $A$,
\item \label{lem190110a5}
each morphism $\hat f\colon (x_n)\to (y_m)$ such that $r\cdot\prod_nx_n=\prod_my_m$ is in $\catw(A)$, and
\item \label{lem190110a0}
some morphism $\hat f\colon (x_n)\to (y_m)$ such that $r\cdot\prod_nx_n=\prod_my_m$ is in $\catw(A)$.
\end{enumerate}
\end{lem}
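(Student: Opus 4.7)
The plan is to prove the chain $(\ref{lem190110a1})\Rightarrow(\ref{lem190110a5})\Rightarrow(\ref{lem190110a0})\Rightarrow(\ref{lem190110a1})$, with the crucial computational tool being Remark~\ref{disc171015b}, which identifies $r$ as the product $\prod_n r_n$ of the unique elements $r_n$ witnessing the divisibility conditions that make $\hat f$ a morphism.

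For $(\ref{lem190110a1})\Rightarrow(\ref{lem190110a5})$, I would take an arbitrary $\hat f\colon (x_n)\to(y_m)$ satisfying $r\prod_n x_n=\prod_m y_m$. If $(y_m)=\emptytuple$, then $\hat f\in\catw(A)$ automatically by the first clause of Definition~\ref{defn170501d}. Otherwise Remark~\ref{disc171015b} provides the unique $r_n$'s with $r_n x_n = \prod_{m\in f^{-1}(n)} y_m$, together with $r=\prod_n r_n$. Since $r$ is invertible in the cancellative commutative monoid $A$, each factor $r_n$ must itself be invertible (if $s$ denotes the inverse of $r=\prod_n r_n$, then $r_n$ has inverse $s\prod_{n'\neq n} r_{n'}$), so $\hat f\in\catw(A)$.

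For $(\ref{lem190110a5})\Rightarrow(\ref{lem190110a0})$, I would simply exhibit one concrete morphism satisfying the equation. The morphism $\widehat{\id_{[1]}}\colon(1)\to(r)$ exists by Proposition~\ref{singletons_unique} (equivalently, since $1\mid r$) and satisfies $r\cdot 1 = r$, so applying hypothesis $(\ref{lem190110a5})$ places it in $\catw(A)$.

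For $(\ref{lem190110a0})\Rightarrow(\ref{lem190110a1})$, suppose $\hat f\colon (x_n)\to(y_m)$ is in $\catw(A)$ with $r\prod_n x_n=\prod_m y_m$. If $(y_m)\neq\emptytuple$, then Remark~\ref{disc171015b} again gives $r=\prod_n r_n$, and each $r_n$ is invertible by membership in $\catw(A)$, so $r$ is invertible as a product of invertibles. If instead $(y_m)=\emptytuple$, then $\prod_m y_m=1$, and the divisibility-monoid clause of Proposition~\ref{empty_tuple_homset} forces each $x_n$ to be invertible (or $(x_n)=\emptytuple$); in either case $\prod_n x_n$ is invertible, and the relation $r\prod_n x_n=1$ shows $r=(\prod_n x_n)^{-1}$ is invertible. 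The main subtlety I anticipate is the bookkeeping around the empty-tuple cases, since the $r_n$-description of $\catw(A)$ does not apply when $(y_m)=\emptytuple$; everything else is a direct application of cancellation and Remark~\ref{disc171015b}.
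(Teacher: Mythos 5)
Your proof is correct and follows essentially the same route as the paper's: both implications $(\mathrm{i})\Rightarrow(\mathrm{ii})$ and $(\mathrm{iii})\Rightarrow(\mathrm{i})$ rest on the identity $r=\prod_n r_n$ from Remark~\ref{disc171015b} together with the fact that a product in a commutative monoid is invertible iff each factor is, with the empty-tuple case handled separately via Proposition~\ref{empty_tuple_homset}. Your only departure is exhibiting the explicit witness $\widehat{\id_{[1]}}\colon(1)\to(r)$ for $(\mathrm{ii})\Rightarrow(\mathrm{iii})$, which the paper dismisses as straightforward; that is a harmless (indeed slightly more careful) touch, since the implication does require that at least one such morphism exist.
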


\begin{proof}
The implication $\eqref{lem190110a5}\implies\eqref{lem190110a0}$ is straightforward.

$\eqref{lem190110a1}\implies\eqref{lem190110a5}$
Assume that $r$ is invertible in $A$, and consider a morphism $\hat f\colon (x_n)\to (y_m)$ such that $r\cdot\prod_nx_n=\prod_my_m$.
If $(y_m)=\emptytuple$, then $\hat f\in\catw(A)$ by definition, so assume that $(y_m)\neq\emptytuple$, and therefore $(x_n)\neq\emptytuple$.
For all $n\in[N]$, let $r_n\in A$ be such that $r_nx_n =  \prod_{m \in f^{-1}(n)} y_m$.
Then $r=\prod_nr_n$ by Remark~\ref{disc171015b}.
Since $r$ is invertible, each $r_n$ must be invertible as well, so
$\hat f\in\catw(A)$ by definition.

$\eqref{lem190110a0}\implies\eqref{lem190110a1}$
Assume that some morphism $\hat f\colon (x_n)\to (y_m)$ such that $r\cdot\prod_nx_n=\prod_my_m$ is in $\catw(A)$.
If $(y_m)=\emptytuple$, then $r$ is invertible by Remark~\ref{disc171015b}.
So, assume that $(y_m)\neq\emptytuple$, hence $(x_n)\neq\emptytuple$.
The elements $r_n$ from Definition~\ref{defn170501d}  
therefore satisfy $r=\prod_nr_n$. The condition $\widehat{\text{id}_{[M]}}\in\catw(A)$ 
implies that each $r_n$ is invertible in $A$, therefore so is $r$, as desired.
\end{proof}

The
set $\mathcal W(A)$ has a convenient description
in terms of the factorization result in Proposition~\ref{prop170423a}, as we show next.

\begin{prop}\label{prop170503c}
Let $\hat f\colon(x_n)\to (y_m)$ be a morphism in $\catf(A)$ between non-empty tuples.
Consider the decomposition $\hat f=\hat\phi\circ\widehat{\id_{[P]}}\circ\hat\epsilon$ from Proposition~\ref{prop170423a}
$$(x_n)\xra{\hat\epsilon}(z_p)\xra{\widehat{\id_{[P]}}}(a_pz_p)
\xra{\hat\phi}(y_m).$$
Then 
the morphism $\hat f$ is in $\catw(A)$,
if and only if the divisibility morphism
$\widehat{\id_{[P]}}$ is an 
isomorphism (see Theorem~\ref{iso-thm}).
\end{prop}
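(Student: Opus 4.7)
The plan is to translate both conditions into the invertibility of the same collection of elements and then observe that they coincide. First, I would apply Lemma~\ref{lem190110a} to $\hat f$: the morphism lies in $\catw(A)$ if and only if the unique element $r \in A$ satisfying $r \cdot \prod_n x_n = \prod_m y_m$ from Remark~\ref{disc171015b} is invertible. The last formula of Proposition~\ref{prop170423a} identifies this $r$ explicitly as $r = u \prod_p a_p$, where $u$ is the product of the invertible elements dropped by $\hat\epsilon$. Since $u$ is a unit, $r$ is invertible if and only if $\prod_p a_p$ is invertible, and in a commutative cancellative monoid every factor of a unit is a unit, so this is equivalent to each $a_p$ being invertible individually.

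Next, I would apply Theorem~\ref{iso-thm} to the divisibility morphism $\widehat{\id_{[P]}} \colon (z_p) \to (a_p z_p)$. Since the underlying function is $\id_{[P]}$, conditions (1) and (2) of that theorem are automatic. Condition (3) demands the existence of invertible elements $v_p \in A$ with $v_p z_p = a_p z_p$ for every $p \in [P]$, and cancellativity of $A$ forces $v_p = a_p$. Hence $\widehat{\id_{[P]}}$ is an isomorphism precisely when each $a_p$ is invertible.

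Comparing the two characterizations, both conditions reduce to the same statement — that every $a_p$ is invertible — which yields the claimed equivalence. I do not anticipate any genuine obstacle here: once Proposition~\ref{prop170423a} provides the explicit formula for $r$, and Lemma~\ref{lem190110a} together with Theorem~\ref{iso-thm} provides the translations on the two sides, the proof is essentially bookkeeping. The only subtlety worth being careful about is the passage from invertibility of the product $\prod_p a_p$ to invertibility of each factor $a_p$, which is where cancellativity plus commutativity of $A$ is used.
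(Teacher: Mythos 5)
Your proposal is correct and follows essentially the same route as the paper: both reduce membership of $\hat f$ in $\catw(A)$ to invertibility of the element $r=u\prod_pa_p$ via Lemma~\ref{lem190110a} and the final formula of Proposition~\ref{prop170423a}, and both reduce invertibility of $\widehat{\id_{[P]}}$ to invertibility of each $a_p$ via Theorem~\ref{iso-thm}. The paper's proof is just a terser version of your bookkeeping; your extra care about passing from invertibility of $\prod_pa_p$ to invertibility of each factor is exactly the point the paper leaves implicit.
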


\begin{proof}
By Remark~\ref{disc171015b}, let $r\in A$ be the unique element of $A$ such that $\prod_my_m=r\prod_nx_n$.
Proposition~\ref{prop170423a} implies that $r=u\cdot\prod_pa_p$
where $u$ is an invertible element of $A$.
It follows that 
$a_p$ is invertible if and only if $r$ is invertible.
Thus $\widehat{\id_{[P]}}$ is an 
isomorphism if and only if $\hat f\in\catw(A)$ by Theorem~\ref{iso-thm} and Lemma~\ref{lem190110a}.
\end{proof}

For the next
result of this section, we recall the following notion from~\cite{MR0210125}.

\begin{defn}
\label{defn170501a}
Let $\catc$ be a category with a collection of 
morphisms
$\catw$. 
The 
\emph{category of fractions}
of $(\catc,\catw)$, if it exists, is a category $\catc[\catw^{-1}]$
with a universal functor $\mathfrak K\colon\catc\to\catc[\catw^{-1}]$
such that
\begin{enumerate}[(1)]
\item \label{defn170501a1}
for each $f\in\catw$ the morphism $\mathfrak K(f)$ is an isomorphism in $\catc[\catw^{-1}]$,
and
\item \label{defn170501a2}
for each category $\catd$ and each functor $\mathfrak L\colon \catc\to\catd$ such that for each $f\in\catw$ the 
morphism $\mathfrak L(f)$ is an isomorphism in $\catd$,
there is a unique functor $\mathfrak L'\colon\catc[\catw^{-1}]
\to\catd$ making the following diagram commute
$$\xymatrix{
\catc\ar[r]^-{\mathfrak K}\ar[rd]_{\mathfrak L}
&\catc[\catw^{-1}]
\ar@{-->}[d]^{\exists !\mathfrak L'}
\\
&\catd.}$$
\end{enumerate}
\end{defn}

\begin{thm}\label{thm170501a}
The category $A$ is (naturally isomorphic to) the 
category of fractions 
$\catf(A)[\catw(A)^{-1}]$,
where the
functor $\mathfrak A_A\colon\catf(A)\to A$ from Definition~\ref{defn170501b} plays the role of the
universal functor $\mathfrak K$ from Definition~\ref{defn170501a}.
\end{thm}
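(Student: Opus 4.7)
The plan is to verify the two conditions of Definition~\ref{defn170501a} for $(A,\mathfrak{A}_A)$, where the triangle is read up to natural isomorphism (as reflected in the parenthetical ``naturally isomorphic to'' in the theorem statement). The key tools will be the near-inverse relationship $\mathfrak{A}_A\circ\mathfrak{B}_A=\id_A$ from Proposition~\ref{prop170430b}, the description of $\catw(A)$ via Remark~\ref{disc171015b} and Definition~\ref{defn170501d}, and the singleton-triangle uniqueness of Proposition~\ref{singletons_unique} and Corollary~\ref{cor190115a}.

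First I would dispatch condition~(1): for $\hat f\colon(x_n)\to(y_m)$ in $\catw(A)$, the morphism $\mathfrak{A}_A(\hat f)$ encodes the relation $\prod_n x_n\mid\prod_m y_m$. Remark~\ref{disc171015b} furnishes a unique $r\in A$ with $\prod_m y_m=r\prod_n x_n$, and Definition~\ref{defn170501d} (together with Proposition~\ref{empty_tuple_homset} when $(y_m)=\emptytuple$) forces $r$ to be invertible in $A$. Hence $\prod_m y_m\mid\prod_n x_n$ also holds, so $\mathfrak{A}_A(\hat f)$ is an isomorphism in~$A$.

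Next I would handle condition~(2). Given any $\mathfrak{L}\colon\catf(A)\to\catd$ that inverts $\catw(A)$, set $\mathfrak{L}':=\mathfrak{L}\circ\mathfrak{B}_A\colon A\to\catd$. I would construct a natural isomorphism $\eta\colon\mathfrak{L}'\circ\mathfrak{A}_A\xra{\cong}\mathfrak{L}$ by choosing, for each $(x_n)$, a canonical morphism $\pi_{(x_n)}\colon(\prod_n x_n)\to(x_n)$ lying in $\catw(A)$: the factorization morphism when $(x_n)\neq\emptytuple$, or the unique morphism $(1_A)\to\emptytuple$ otherwise (both lie in $\catw(A)$ by Remark~\ref{disc170501a}). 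Setting $\eta_{(x_n)}:=\mathfrak{L}(\pi_{(x_n)})$ gives the required pointwise isomorphisms, and naturality in a morphism $\hat g\colon(x_n)\to(y_m)$ reduces to comparing two morphisms $(\prod_n x_n)\to(y_m)$ in $\catf(A)$, which must coincide by Proposition~\ref{singletons_unique} (or Corollary~\ref{cor190115a}); applying $\mathfrak{L}$ then yields the naturality square.

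For the uniqueness clause of Definition~\ref{defn170501a}, suppose $\mathfrak{L}''\colon A\to\catd$ satisfies $\mathfrak{L}''\circ\mathfrak{A}_A\cong\mathfrak{L}$. Precomposing with $\mathfrak{B}_A$ and invoking $\mathfrak{A}_A\circ\mathfrak{B}_A=\id_A$ from Proposition~\ref{prop170430b} gives $\mathfrak{L}''\cong\mathfrak{L}\circ\mathfrak{B}_A=\mathfrak{L}'$. The main obstacle I expect is the tension between the strict commutativity in Definition~\ref{defn170501a} and the fact that $\mathfrak{L}'\circ\mathfrak{A}_A$ is typically only \emph{naturally isomorphic} to $\mathfrak{L}$ (since $\mathfrak{L}((\prod_n x_n))$ and $\mathfrak{L}((x_n))$ are a priori distinct objects of $\catd$); this is precisely what the theorem's own parenthetical qualifier accommodates, and the singleton-triangle lemmas of Section~\ref{sec170415b} supply exactly the rigidity needed to make the naturality verification effectively automatic.
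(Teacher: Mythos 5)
Your proposal is correct and follows essentially the same route as the paper: verify that $\mathfrak A_A$ inverts $\catw(A)$ via the invertibility of the element $r$ from Remark~\ref{disc171015b}, define $\mathfrak L'=\mathfrak L\circ\mathfrak B_A$, use the factorization morphisms $(\prod_nx_n)\to(x_n)$ (weak equivalences by Remark~\ref{disc170501a}) together with Corollary~\ref{cor190115a} for the natural isomorphism $\mathfrak L'\circ\mathfrak A_A\cong\mathfrak L$, and deduce uniqueness from $\mathfrak A_A\mathfrak B_A=\id_A$. The only difference is cosmetic: the paper routes the invertibility argument through Lemma~\ref{lem190110a} rather than directly through Definition~\ref{defn170501d}.
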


\begin{proof}
First, let $\hat f\colon (x_n)\to(y_m)$ be a morphism in $\catw(A)$; we show that $\mathfrak A_A(\hat f)$ is an isomorphism in $A$.
Lemma~\ref{lem190110a} says that there is an invertible $u\in A$ such that 
$\prod_my_m=u\prod_nx_n$. The fact that $u$ is invertible implies that $\prod_my_m\mid\prod_nx_n$ and $\prod_nx_n\mid\prod_my_m$
so the morphism $\prod_nx_n\mid\prod_my_m$ in $A$ is an isomorphism. That is,
the morphism $\mathfrak A_A(\hat f)$ is invertible, as desired.

Next, let 
$\catd$ be a category, and let $\mathfrak L\colon 
\catf(A)
\to\catd$ be a functor
such that for each $\hat f\in\catw(A)$ the morphism $\mathfrak L(\hat f)$ is an isomorphism in $\catd$.
For each monoid element $x\in A$, define $\mathfrak L'(x)=\mathfrak L((x))$ where $(x)$ is the associated 1-tuple in $\catf(A)$. 
In other words, we define $\mathfrak L'=\mathfrak L\mathfrak B_A$ where $\mathfrak B_A\colon A\to\catf(A)$ 
is the functor from Definition~\ref{defn170501c}.
This yields the next commutative diagram
\begin{equation}\label{eq170501b}
\begin{split}
\xymatrix{
\catf(A)\ar[r]^-{\mathfrak A_A}\ar[rd]_{\mathfrak L}
&A\ar@{-->}[d]^{\mathfrak L\mathfrak B_A=\mathfrak L'}
\\
&\catd}
\end{split}
\end{equation}
as follows. 
First, we need to show that 
for each object $(x_n)$ in $\catf(A)$, we have $\mathfrak L(\mathfrak B_A(\mathfrak A_A((x_n))))\cong\mathfrak L((x_n))$.
By the definitions of $\mathfrak B_A$ and $\mathfrak A_A$, this is tantamount to showing that $\mathfrak L((\prod_nx_n))\cong\mathfrak L((x_n))$.
Note that by 
Remark~\ref{disc170501a}, the factorization
morphism $\hat f\colon (\prod_nx_n)\to(x_n)$ in $\catf(A)$ is in $\catw(A)$. 
Thus, by assumption the 
induced morphism $\mathfrak L(\hat f)\colon \mathfrak L((\prod_nx_n))\to\mathfrak L((x_n))$ is an isomorphism in $\catd$, as desired.

Next, we need to show that for each 
morphism $\hat f\colon(x_n)\to(y_m)$ in $\catf(A)$, we have $\mathfrak L(\mathfrak B_A(\mathfrak A_A(\hat f)))\cong\mathfrak L(\hat f)$.
As in the previous paragraph, the natural morphisms $(\prod_nx_n)\to(x_n)$ and $(\prod_my_m)\to(y_m)$ in $\catf(A)$ are in $\catw(A)$.
Also, the divisibility 
relation $x_n\mid\prod_{m\in f^{-1}(n)}y_m$ implies
that $\prod_nx_n\mid \prod_my_m$.
By Corollary~\ref{cor190115a}, 
this yields a commutative diagram in $\catf(A)$ with 1-tuples on the left
$$\xymatrix{
(\prod_nx_n)\ar[r]\ar[d]_{\mathfrak B_A(\mathfrak A_A(\hat f))}
&(x_n)\ar[d]^{\hat f}\\
(\prod_my_m)\ar[r]
&(y_m)
}$$
where the horizontal morphisms are in $\catw(A)$.
An application of $\mathfrak L$ turns the horizontal morphisms into isomorphisms in $\catd$, by assumption, so we have the next commutative diagram in $\catd$.
$$\xymatrix{
\mathfrak L((\prod_nx_n))\ar[r]^-\cong\ar[d]_{\mathfrak L(\mathfrak B_A(\mathfrak A_A(\hat f)))=\mathfrak L'(\mathfrak A_A(\hat f))}
&\mathfrak L((x_n))\ar[d]^{\mathfrak L(\hat f)}\\
\mathfrak L((\prod_my_m))\ar[r]^-\cong
&\mathfrak L((y_m))
}$$
This is exactly what it means to have $\mathfrak L(\mathfrak B_A(\mathfrak A_A(\hat f)))\cong\mathfrak L(\hat f)$. 
This shows that $\mathfrak L'$ is a functor making the diagram in~\eqref{eq170501b} commute.

Lastly, we need to show that $\mathfrak L'$ is the unique functor (up to natural 
isomorphism, of course) making diagram~\eqref{eq170501b} commute.
Suppose that $\mathfrak L''$ is another functor making the next diagram commute.
\begin{equation}\label{eq170501c}
\begin{split}
\xymatrix{
\catf(A)\ar[r]^-{\mathfrak A_A}\ar[rd]_{\mathfrak L}
&A\ar[d]^-{\mathfrak L''}
\\
&\catd}
\end{split}
\end{equation}
In the next display, the first equality is from Proposition~\ref{prop170430b}.
$$\mathfrak L''=\mathfrak L''\mathfrak{A}_A\mathfrak{B}_A=\mathfrak L\mathfrak{B}_A=\mathfrak L'
$$
The second equality is from diagram~\eqref{eq170501c}, and the third equality is by the definition of $\mathfrak L'$.
This establishes the desired uniqueness.
\end{proof}

Next, we exhibit further properties of the set $\catw(A)$,
beginning with some terminology from~\cite[Chap.~2]{MR2102294}.

\begin{defn}\label{defn171015a}
Let $\textbf{C}$ be a category. 
A collection of morphisms $\catw$ from $\textbf{C}$ is a collection of \textit{weak equivalences} 
if (1) every isomorphism is in $\catw$, and 
(2) $\catw$ satisfies the following $2$-of-$3$ property: if any two of the morphisms $f, g, g \circ f$ are in $\mathcal W$, then the third is also.
\end{defn}

\begin{prop} \label{wk_equiv}
The set $\catw(A)$ is a collection of weak equivalences. 
\end{prop}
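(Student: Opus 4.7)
The definition of a collection of weak equivalences requires two things: every isomorphism lies in $\catw(A)$, and $\catw(A)$ satisfies the 2-of-3 property. The first condition is essentially already noted in Remark~\ref{disc170501a}; it follows at once by combining Theorem~\ref{iso-thm} with Definition~\ref{defn170501d}, since for an isomorphism $\hat f$ the elements $u_n$ produced by Theorem~\ref{iso-thm} are invertible and serve as the required $r_n$.

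For the 2-of-3 property, the plan is to encode membership in $\catw(A)$ as the invertibility of a single element of $A$, using Remark~\ref{disc171015b} and Lemma~\ref{lem190110a}, and then reduce everything to an elementary cancellation computation. Given composable morphisms $\hat f\colon(x_n)\to(y_m)$ and $\hat g\colon(y_m)\to(z_p)$, Remark~\ref{disc171015b} yields unique elements $r,s,t\in A$ with
\[
r\textstyle\prod_n x_n = \prod_m y_m, \qquad s\prod_m y_m = \prod_p z_p, \qquad t\prod_n x_n = \prod_p z_p.
\]
Commutativity and associativity give $(sr)\prod_n x_n = s\prod_m y_m = \prod_p z_p = t\prod_n x_n$, and cancellativity of $A$ then forces $t=sr$. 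By Lemma~\ref{lem190110a}, the statements $\hat f\in\catw(A)$, $\hat g\in\catw(A)$, and $\hat g\circ\hat f\in\catw(A)$ are equivalent, respectively, to invertibility of $r$, $s$, and $t$ in $A$. Since $A$ is commutative and the relation $t=sr$ holds, the invertibility of any two of $\{r,s,t\}$ forces the invertibility of the third (for instance, if $s$ and $t$ are invertible, then $r=s^{-1}t$ is invertible, with inverse $st^{-1}$), yielding the 2-of-3 property.

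The only subtlety I anticipate, and the mildest of obstacles, is to confirm that the elements $r,s,t$ are genuinely defined when one or more of the three tuples equals $\emptytuple$. Here Proposition~\ref{empty_tuple_homset} saves the day: if $(y_m)=\emptytuple$ then either $(x_n)=\emptytuple$ or every $x_n$ is invertible, so $\prod_n x_n$ is invertible and the corresponding $r$ exists and is automatically invertible, consistent with the ``$(y_m)=\emptytuple$'' clause of Definition~\ref{defn170501d}; the analogous remark covers $(z_p)=\emptytuple$ and the induced constraints on $(y_m)$ and $(x_n)$. Once these edge cases are acknowledged, the identity $t=sr$ and the reduction via Lemma~\ref{lem190110a} complete the argument uniformly.
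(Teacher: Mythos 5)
Your proof is correct and follows essentially the same route as the paper: both verify that isomorphisms lie in $\catw(A)$ via Theorem~\ref{iso-thm}, and both establish the 2-of-3 property by using Remark~\ref{disc171015b} and Lemma~\ref{lem190110a} to translate membership in $\catw(A)$ into invertibility of the elements $r$, $s$, and $t=sr$, after which cancellativity and the multiplicativity of invertibility finish the job. Your extra care with the $\emptytuple$ edge cases is a reasonable addition that the paper leaves implicit.
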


\begin{proof}
The isomorphisms of $\catf(A)$ are all in $\mathcal W(A)$ by Remark~\ref{disc170501a}.
For the 2-of-3 condition, let $\hat{f}\colon(x_n) \to (y_m)$ and $\hat{g}\colon(y_m) \to (z_\ell)$ be morphisms in $\catf(A)$.
Let $r,s\in A$ be such that $\prod_my_m=r\prod_nx_n$ and $\prod_\ell z_\ell=s\prod_my_m=rs\prod_nx_n$.
The product $rs$ is invertible in $A$ if and only if $r$ and $s$ are both invertible in $A$.
So, using the cancellativity of $A$, one applies Lemma~\ref{lem190110a}
to conclude that $\hat g\circ\hat f\in\catw(A)$ if and only if both $\hat f$ and $\hat g$ are in $\catw(A)$.
This implies the 2-of-3 property.
\end{proof}

Next, we turn our attention to the calculus of fractions of~\cite{MR0210125}.

\begin{defn}\label{defn190106a}
Let $\textbf{C}$ be a category. 
A collection of morphisms $\catw$ from $\textbf{C}$  \textit{admits a calculus of right fractions}
if it satisfies the following conditions:
\begin{enumerate}[(1)]
\item \label{defn190106a1} 
$\catw$ contains all identities and is closed under composition;
\item \label{defn190106a2}
(right Ore condition) for each morphism $f\colon x\to y$ in $\catw$ and every morphism $g\colon z\to y$ in $\textbf{C}$, 
there are morphisms $f'\colon w\to z$ in $\catw$ and $g'\colon w\to x$ in $\textbf{C}$ 
$$\xymatrix{
w \ar@{-->}[r]^-{g'}\ar@{-->}[d]_{f'}
&x\ar[d]^{f}\\
z\ar[r]^-{g}&y}$$
making the above diagram commute; and
\item \label{defn190106a3}
(right cancellability) for every pair of morphisms $f,f'\colon x\to y$ in $\textbf{C}$,
if there is a morphism $g\colon y\to z$ in $\catw$ such that $g\circ f=g\circ f'$, then there is a morphism
$h\colon w\to x$ in $\catw$ such that $f\circ h=f'\circ h$. 
\end{enumerate}
\end{defn}

\begin{prop} \label{prop190106a}
The set $\catw(A)$ admits a calculus of right fractions.
\end{prop}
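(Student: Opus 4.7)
The plan is to verify the three clauses of Definition~\ref{defn190106a} in turn, leveraging the uniqueness formula~\eqref{eq170421b} from Proposition~\ref{singletons_unique} (which admits at most one morphism out of a $1$-tuple into any fixed tuple) together with Lemma~\ref{lem190110a} (which detects membership in $\catw(A)$ via invertibility of a product ratio). Clause~\eqref{defn190106a1} is essentially already in hand: identity morphisms are isomorphisms and hence lie in $\catw(A)$ by Remark~\ref{disc170501a}, while closure under composition is the forward half of the $2$-of-$3$ property proved in Proposition~\ref{wk_equiv}.

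For the right Ore condition~\eqref{defn190106a2}, my plan is to build the roof out of a single $1$-tuple. Given $\hat{f}\colon(x_n)\to(y_m)$ in $\catw(A)$ and $\hat{g}\colon(z_\ell)\to(y_m)$, I would set $W=\prod_\ell z_\ell$ (with the empty product read as $1_A$) and take $(w_k)$ to be the $1$-tuple $(W)$. The morphism $\hat{f}'\colon(W)\to(z_\ell)$ is then forced by~\eqref{eq170421b} (or by Proposition~\ref{empty_tuple_homset} when $(z_\ell)=\emptytuple$), and comes out a factorization morphism, hence in $\catw(A)$ by Remark~\ref{disc170501a}. To produce $\hat{g}'\colon(W)\to(x_n)$ I need to verify $W\mid\prod_n x_n$: multiplying the constraints that make $\hat{g}$ a morphism yields $\prod_\ell z_\ell\mid\prod_m y_m$, and Lemma~\ref{lem190110a} applied to $\hat{f}\in\catw(A)$ supplies a unit $u\in A$ with $\prod_m y_m=u\prod_n x_n$, from which the claim follows by cancellativity. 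Commutativity of the square will then be automatic, since $\hat{f}\circ\hat{g}'$ and $\hat{g}\circ\hat{f}'$ are both morphisms from the $1$-tuple $(W)$ into $(y_m)$ and there is at most one such. Degenerate cases are handled uniformly by the convention that the empty product equals $1_A$: if $(z_\ell)=\emptytuple$ then $\hat{f}'\colon(1_A)\to\emptytuple$ still lies in $\catw(A)$ via clause~(1) of Definition~\ref{defn170501d}, and $\hat{g}'$ exists because $1_A$ divides everything.

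For right cancellability~\eqref{defn190106a3} I plan to use the same $1$-tuple trick, which trivializes the verification and, interestingly, never invokes the hypothesis $\hat{g}\circ\hat{f}=\hat{g}\circ\hat{f}'$. Setting $X=\prod_n x_n$ and $(w_k)=(X)$, I take $\hat{h}\colon(X)\to(x_n)$ to be the unique morphism out of this $1$-tuple; this is a factorization morphism (or, in the degenerate case $(x_n)=\emptytuple$, sits in $\catw(A)$ via clause~(1) of Definition~\ref{defn170501d}). Then $\hat{f}\circ\hat{h}$ and $\hat{f}'\circ\hat{h}$ are both morphisms from the $1$-tuple $(X)$ to $(y_m)$, and so are forced to be equal by~\eqref{eq170421b}. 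The only real obstacle across the whole proof is the single divisibility $\prod_\ell z_\ell\mid\prod_n x_n$ in the Ore step; once that is in hand, every remaining verification collapses to the uniqueness of morphisms out of a $1$-tuple.
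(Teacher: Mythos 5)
Your proposal is correct and follows essentially the same route as the paper's proof: the same collapse of the Ore roof to the $1$-tuple $(\prod_\ell z_\ell)$, the same use of Lemma~\ref{lem190110a} to get $\prod_\ell z_\ell\mid\prod_n x_n$, and the same appeal to uniqueness of morphisms out of a $1$-tuple (Corollary~\ref{cor190115a}) for both commutativity and right cancellability. Your observation that the cancellability hypothesis $\hat g\circ\hat f=\hat g\circ\hat f'$ is never actually used is also implicit in the paper's argument.
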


\begin{proof}
The set $\catw(A)$ contains all identities and is closed under composition by Proposition~\ref{wk_equiv}.
For the right Ore condition, let $\hat f\colon(x_n)\to(y_m)$ be a morphism in $\catw(A)$, and let
$\hat g\colon(z_p)\to(y_m)$ be a morphism in $\catf(A)$. 
The morphism condition for $\hat g$ implies that $\prod_pz_p\mid\prod_my_m$.
And according to Lemma~\ref{lem190110a}
the fact that $\hat f$ is in $\catw(A)$ implies that $\prod_my_m=r\prod_nx_n$ for an invertible element $r\in A$.
It follows that $\prod_pz_p\mid\prod_nx_n$, so the unique map $g'\colon[N]\to[1]$ defines a morphism $\widehat{g'}\colon(\prod_pz_p)\to(x_n)$.
Also, the unique map $f'\colon [P]\to[1]$ defines a factorization morphism $\widehat{f'}\colon(\prod_pz_p)\to(z_p)$ in $\catw(A)$
by Remark~\ref{disc170501a}.
Corollary~\ref{cor190115a} shows that these morphisms
make the following diagram commute
$$\xymatrix{
(\prod_pz_p) \ar@{-->}[r]^-{\widehat{g'}}\ar@{-->}[d]_{\widehat{f'}}
&(x_n)\ar[d]^{\hat f}\\
(z_p)\ar[r]^-{\hat g}&(y_m)}$$
so the right Ore condition is satisfied.

For right cancellability, let  $\hat f,\widehat{f'}\colon (x_n)\to (y_m)$ be morphisms in $\catf(A)$, and let
$\hat g\colon (y_m)\to (z_p)$ be a morphism in $\catw(A)$ such that $\hat g\circ \hat f=\hat g\circ \widehat{f'}$.
As in the preceding paragraph, one shows that the unique morphism
$\hat h\colon (\prod_nx_n)\to (x_n)$, which is in $\catw(A)$, satisfies $\hat f\circ \hat h=\widehat{f'}\circ\hat h$, as desired.
\end{proof}

\begin{disc}\label{disc190106a}
Because of~\cite[Sections I.2--3]{MR0210125}
one of the benefits of $\catw(A)$ admitting a calculus of right fractions is that it allows for a simple representation of morphisms
in $\catf(A)[\catw(A)^{-1}]$ as simple fractions instead of more complicated zig-zag diagrams. Furthermore, it provides a 
straightforward characterization of equality of parallel morphisms; and the natural functor $\catf(A)\to\catf(A)[\catw(A)^{-1}]$ is flat,
moreover, it is uniformly flat~\cite[Section~6]{MR1406095}.
In particular, the properties described in~\cite[Section~1]{MR1406095} are satisfied, including that $\catw(A)$ is focal.
Since we already have a simple description of $\catf(A)[\catw(A)^{-1}]$ in Theorem~\ref{thm170501a}, we resist the urge to say
more about these conditions here.
\end{disc}

\section{Weak Divisibility}\label{sec170708a}

\begin{assumptions}
In this section, again  $A$ is a divisibility monoid.
In addition, we say that a morphism $\hat f$ is a \emph{weak equivalence} provided that $\hat f\in\catw(A)$.
\end{assumptions}

Here
we introduce a notion of 
weak divisibility
of morphisms for use with our treatment of 
weak primeness
in
Section~\ref{sec170702a}; see Definition~\ref{defn170702a}.
It may not be clear initially why this definition is useful; see however the application of 
weak
primeness in Theorem~\ref{prop170710a}.
For some motivation/perspective on this definition, 
we begin with a motivating example.

\begin{ex}\label{ex170706azzz}
Consider the divisibility monoid $A=\bbz\setminus\{0\}$
and the morphisms 
$\widehat{\id_{[1]}}^{(2)}_{(6)}\colon(2)\to(6)$ and $\widehat{\id_{[1]}}^{(5)}_{(105)}\colon(5)\to(105)$.
Intuitively, we think of the morphism $\hat f$ as 
multiplication by 3, while $\hat g$ is multiplication by 
$21$.
With this guiding 
thought,
it seems reasonable that $\hat f$ 
should divide
$\hat g$ and that $\hat g$ should not divide $\hat f$.

A general idea says that 
$\hat f$ should divide $\hat g$ when
$\hat g$ is in 
a nice subset  $\langle\hat f\rangle\subseteq\operatorname{Mor}(\catf(A))$ that behaves like an
ideal generated by $\hat f$;
the 
difficulty
is to decide what $\langle\hat f\rangle$ should be.
In our opinion, it should contain $\hat f$, and it should be closed under meaningful compositions.
So, for instance, one could define  
$\langle\hat f\rangle$ to be the set of all meaningful compositions 
$\hat\alpha\circ\hat f\circ\hat\beta$.
With this first approximation, in order 
for $\hat f$ to divide $\hat g$
in our example, we would require the existence
of morphisms in $\catf(A)$ making the following diagram commute.
\begin{equation}
\label{eq170918a}
\begin{split}
\xymatrix{
(5)\ar@{-->}[r]\ar[d]_{\hat g}
&(2)\ar[d]^-{\hat f}
\\
(5\cdot 7\cdot 3)
&(2\cdot 3)\ar@{-->}[l]
}
\end{split}
\end{equation}
It is straightforward to show that no such morphisms exist, e.g., because $5\nmid 2$.
In other words, the failure of existence of such a diagram lies in the
entries of the domains and codomains of the morphisms $\hat f$ and $\hat g$, not necessarily in the 
factorization properties encoded in the morphisms.

One can
come close to remedying this by tensoring
(i.e., concatenating, as in Definition~\ref{defn170605a})
appropriately to make the domains line up
as in the following  diagram wherein the top horizontal arrow is  the natural isomorphism.
$$\xymatrix@C=1em{
(2,5)\ar@{=}[r]&(2)\otimes(5)\ar[rr]\ar[d]_{(2)\otimes\hat g}
&&(5)\otimes(2)\ar[d]^-{(5)\otimes\hat f}\ar@{=}[r]
&(5,2)
\\
(2,105)\ar@{=}[r]&(2)\otimes(5\cdot 7\cdot 3)
&&(5)\otimes(2\cdot 3)\ar@{-->}[ll]\ar@{=}[r]
&(5,6).
}
$$
However, one again checks readily that there is no morphism for the dashed arrow, let alone one making the 
diagrams
commute.

Taking a cue from 
the category of fractions,
one could return to diagram~\eqref{eq170918a} and replace the horizontal
arrows with zig-zags as in the following
$$\xymatrix{
(5)\ar[d]_-{\hat g}
&\ar[l]_-{\widehat{f_1}}^-{\in\catw(A)}(c_{n_1})\ar[r]^-{\widehat{f_2}}
&(c_{n_2})
&\ar[l]_-{\widehat{f_3}}^-{\in\catw(A)}\cdots\ar[r]^-{\widehat{f_{2r}}}
&(2)\ar[d]^-{\hat f}
\\
(5\cdot 7\cdot 3)
&\ar[l]_-{\widehat{g_{2s}}}\cdots\ar[r]^-{\widehat{g_{3}}}_-{\in\catw(A)}
&(d_{m_{2}})
&\ar[l]_-{\widehat{g_{2}}}(d_{m_1})\ar[r]^-{\widehat{g_{1}}}_-{\in\catw(A)}
&(2\cdot 3)
}$$
where each row consists of a finite  number of morphisms, and the 
odd-indexed
morphisms $\widehat{f_{2k+1}}$ and $\widehat{g_{2k+1}}$ 
are weak equivalences.
(Note that we make no assumption about commutativity in this diagram, because the shape makes commutativity meaningless.)
However, there is no such diagram.
Indeed,
were such a diagram to exist, 
Remark~\ref{disc170702a} would provide the following \emph{commutative} diagram in $A$
$$\xymatrix{
5\ar@{-->}[r]\ar[d]
&2\ar[d]
\\
5\cdot 7\cdot 3
&2\cdot 3\ar@{-->}[l]
}$$
but there is no such diagram (let alone a commutative one),
e.g., because $5\nmid 2$.
\end{ex}

In light of Example~\ref{ex170706azzz}, we introduce the following
which is a fusion of the two inadequate ways we suggested that one might 
address the deficiencies of diagram~\eqref{eq170918a}.

\begin{defn}\label{defn170702a}
Recall the concatenation tensor product from Definition~\ref{defn170605a}.
Let $\hat f\colon (v_i)\to(w_j)$ and $\hat g\colon (x_n)\to(y_m)$ be morphisms in $\catf(A)$.
We say that $\hat f$ 
\emph{weakly divides} $\hat g$, denoted $\hat f\divs\hat g$,
provided that there are tuples $(a_p)$ and $(b_q)$ in $\catf(A)$
and a diagram
in $\catf(A)$
\begin{equation}
\label{diag170702a}
\begin{split}
\xymatrix{
(a_p)\otimes(x_n)\ar[d]_-{(a_p)\otimes\hat g}
&\ar[l]_-{\widehat{f_1}}^-{\in\catw(A)}(c_{n_1})\ar[r]^-{\widehat{f_2}}
&(c_{n_2})
&\ar[l]_-{\widehat{f_3}}^-{\in\catw(A)}\cdots\ar[r]^-{\widehat{f_{2r}}}
&(b_q)\otimes(v_i)\ar[d]^-{(b_q)\otimes\hat f}
\\
(a_p)\otimes(y_m)
&\ar[l]_-{\widehat{g_{2s}}}\cdots\ar[r]^-{\widehat{g_{3}}}_-{\in\catw(A)}
&(d_{m_{2}})
&\ar[l]_-{\widehat{g_{2}}}(d_{m_1})\ar[r]^-{\widehat{g_{1}}}_-{\in\catw(A)}
&(b_q)\otimes(w_j)
}
\end{split}
\end{equation}
such that each row consists of a finite  number of morphisms, and the 
odd-indexed
morphisms $\widehat{f_{2k+1}}$ and $\widehat{g_{2k+1}}$ 
are weak equivalences.
\end{defn}

As in Example~\ref{ex170706azzz},
note that we do not say that the diagram~\eqref{diag170702a}
is commutative in $\catf(A)$, since the shape of the diagram makes commutativity meaningless.
Some readers will notice that 
Proposition~\ref{prop190106a} can be used to simplify the diagram~\eqref{diag170702a}.
We do not make this explicit here because of the  simplification in Theorem~\ref{prop170708a} below.

\begin{ex}\label{ex170706a}
Consider the divisibility monoid $A=\bbz\setminus\{0\}$
and the morphisms 
$\widehat{\id_{[1]}}^{(2)}_{(6)}\colon(2)\to(6)$ and $\widehat{\id_{[1]}}^{(5)}_{(105)}\colon(5)\to(105)$
from Example~\ref{ex170706azzz}.
We claim that the following diagram shows that $\widehat{\id_{[1]}}^{(2)}_{(6)}\divs\widehat{\id_{[1]}}^{(5)}_{(105)}$.
$$\xymatrix{
(2)\otimes(5)\ar[d]_-{(2)\otimes\widehat{\id_{[1]}}^{(5)}_{(105)}}
&\ar[l]_-{\widehat{f_1}}^-{\in\catw(A)}(10)\ar[r]^-{\widehat{f_2}}
&(5)\otimes(2)\ar[d]^-{(5)\otimes\widehat{\id_{[1]}}^{(2)}_{(6)}}
\\
(2)\otimes(105)
&\ar[l]_-{\widehat{g_{2}}}(30)\ar[r]^-{\widehat{g_{1}}}_-{\in\catw(A)}
&(5)\otimes(6)
}
$$
Here the horizontal morphisms are all factorization morphisms; see Example~\ref{basic-example} 
and
Definition~\ref{defn170715b}.
Indeed, by definition, this diagram has the following form
\begin{equation}
\label{diag170706b}
\begin{split}
\xymatrix{
(2,5)\ar[d]_-{\widehat{g'}}
&\ar[l]_-{\widehat{f_1}}^-{\in\catw(A)}(10)\ar[r]^-{\widehat{f_2}}
&(5,2)\ar[d]^-{\widehat{f'}}
\\
(2,105)
&\ar[l]_-{\widehat{g_{2}}}(30)\ar[r]^-{\widehat{g_{1}}}_-{\in\catw(A)}
&(5,6)
}
\end{split}
\end{equation}
where 
the factorization morphisms $\widehat{f_1}$ and $\widehat{g_1}$ are weak equivalences by 
Remark~\ref{disc170501a}.
\end{ex}

\begin{disc}\label{disc170702a}
Let $\hat f\colon (v_i)\to(w_j)$ and $\hat g\colon (x_n)\to(y_m)$ be morphisms in $\catf(A)$
such that $\hat f\divs\hat g$.
The diagram~\eqref{diag170702a} provided by
Definition~\ref{defn170702a}
induces the following diagram in $A$
$$\xymatrix{
\prod_pa_p\cdot\prod_nx_n\ar[d]^-{\mathfrak A_A((a_p)\otimes\hat g)}\ar[r]^-{\mathfrak A_A(\widehat{f_1})^{-1}}
&\prod_{n_1}c_{n_1}\ar[r]^-{\mathfrak A_A(\widehat{f_2})}
&\prod_{n_2}c_{n_2}\ar[r]^-{\mathfrak A_A(\widehat{f_3})^{-1}}
&\cdots\ar[r]^-{\mathfrak A_A(\widehat{f_{2r}})}
&\prod_qb_q\cdot\prod_iv_i\ar[d]_-{\mathfrak A_A((b_q)\otimes\hat f)}
\\
\prod_pa_p\cdot\prod_my_m
&\cdots\ar[l]_-{\mathfrak A_A(\widehat{g_2})}
&\prod_{m_2}d_{m_2}\ar[l]_-{\mathfrak A_A(\widehat{g_3})^{-1}}
&\prod_{m_1}d_{m_1}\ar[l]_-{\mathfrak A_A(\widehat{g_{2}})}
&\prod_qb_q\cdot\prod_jw_j\ar[l]_-{\mathfrak A_A(\widehat{g_1})^{-1}}
}$$
where $\mathfrak A_A\colon\catf(A)\to A$ is the the functor from Definition~\ref{defn170501b}.
Note that 
the morphisms $\mathfrak A_A(\widehat{f_{2k+1}})$ and $\mathfrak A_A(\widehat{g_{2k+1}})$ 
in this diagram are isomorphisms in $\catf(A)$ since  the morphisms $\widehat{f_{2k+1}}$ and $\widehat{g_{2k+1}}$ 
in the original diagram~\eqref{diag170702a}  are weak equivalences; see Theorem~\ref{thm170501a}.
In particular, in $A$ we have  $\prod_pa_p\cdot\prod_nx_n\mid\prod_qb_q\cdot\prod_iv_i$ and
$\prod_qb_q\cdot\prod_jw_j\mid\prod_pa_p\cdot\prod_my_m$.

Note furthermore that this new diagram in $A$ commutes by 
Remark~\ref{disc171015a}.

Lastly, we observe that one can detect weak divisibility using diagrams of shapes different
from~\eqref{diag170702a} by inserting identity morphisms in the horizontal sequences.
For instance, if one has a diagram of the following shape in $\catf(A)$
$$\xymatrix{
(a_p)\otimes(x_n)\ar[d]_-{(a_p)\otimes\hat g}
&\ar[l]_-{\widehat{f'}}^-{\in\catw(A)}(c_{n_1})
&\ar[l]_-{\widehat{f''}}^-{\in\catw(A)}(b_q)\otimes(v_i)\ar[d]^-{(b_q)\otimes\hat f}
\\
(a_p)\otimes(y_m)
&\ar[l]_-{\widehat{g_{2}}}(d_{m_{1}})\ar[r]^-{\widehat{g_{1}}}_-{\in\catw(A)}
&(b_q)\otimes(w_j)
}$$
then one obtains the next 
diagrams
to conclude that $\hat f\divs\hat g$.
\begin{gather*}
\xymatrix@C=11mm{
(a_p)\otimes(x_n)\ar[d]_-{(a_p)\otimes\hat g}
&\ar[l]_-{\widehat{f'}}^-{\in\catw(A)}(c_{n_1})
&\ar[l]_-{\widehat{f''}}^-{\in\catw(A)}(b_q)\otimes(v_i)\ar[r]^-{\id_{(b_q)\otimes(v_i)}}\ar@/_2pc/[ll]_-{\widehat{f'}\circ\widehat{f''}}^-{\in\catw(A)}
&(b_q)\otimes(v_i)\ar[d]^-{(b_q)\otimes\hat f}
\\
(a_p)\otimes(y_m)
&\ar[l]_-{\widehat{g_{2}}}(d_{m_{1}})\ar[rr]^-{\widehat{g_{1}}}_-{\in\catw(A)}
&&(b_q)\otimes(w_j)}\\
\xymatrix@C=11mm{
(a_p)\otimes(x_n)\ar[d]_-{(a_p)\otimes\hat g}
&\ar[l]_-{\widehat{f'}\circ\widehat{f''}}^-{\in\catw(A)}(b_q)\otimes(v_i)\ar[r]^-{\id_{(b_q)\otimes(v_i)}}
&(b_q)\otimes(v_i)\ar[d]^-{(b_q)\otimes\hat f}
\\
(a_p)\otimes(y_m)
&\ar[l]_-{\widehat{g_{2}}}(d_{m_{1}})\ar[r]^-{\widehat{g_{1}}}_-{\in\catw(A)}
&(b_q)\otimes(w_j)
}
\end{gather*}
\end{disc}

In the next result, 
condition~\eqref{prop170708a5}  makes detection of weak divisibility quite easy.
And it gives another indication of how the morphisms in $\catw(A)$ can see factorization properties in $A$.
Compare condition~\eqref{prop170708a2'}  to 
diagram~\eqref{diag170706b}
in 
Example~\ref{ex170706a}.

\begin{thm}\label{prop170708a}
Let
$\hat f\colon (v_i)\to(w_j)$ and $\hat g\colon (x_n)\to(y_m)$ 
be morphisms in $\catf(A)$. Then the following conditions are equivalent.
\begin{enumerate}[\rm(i)]
\item \label{prop170708a1}
We have $\hat f\divs\hat g$.
\item \label{prop170708a2'}
There are elements $a,b\in A$  and a diagram
in $\catf(A)$
\begin{equation*}
\xymatrix{
(a)\otimes(x_n)\ar[d]_-{(a)\otimes\hat g}
&\ar[l]_-{\hat{\mu}}^-{\in\catw(A)}\left(a\prod_nx_n\right)\ar[r]^-{\widehat{\alpha}}
&(b)\otimes(v_i)\ar[d]^-{(b)\otimes\hat f}
\\
(a)\otimes(y_m)
&\ar[l]_-{\widehat{\beta}}\left(b\prod_jw_j\right)\ar[r]^-{\hat{\eta}}_-{\in\catw(A)}
&(b)\otimes(w_j)
}
\end{equation*}
where the middle tuple in each row has length 1, and the morphisms $\hat\mu$ and $\hat\eta$ are 
factorization morphisms.
\item \label{prop170708a4}
We have $\prod_nx_n\cdot\prod_jw_j\mid\prod_iv_i\cdot\prod_my_m$.
\item \label{prop170708a3}
There are elements $a,b\in A$ such that $a\prod_nx_n\mid b\prod_iv_i$ and $b\prod_jw_j\mid a\prod_my_m$.
\item \label{prop170708a5}
The monoid elements $r,s\in A$ such that $\prod_my_m=r\prod_nx_n$ and $\prod_jw_j=s\prod_iv_i$ satisfy $s\mid r$.
\end{enumerate}
\end{thm}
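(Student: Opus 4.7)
The plan is to establish the circle of implications (ii)$\Rightarrow$(i)$\Rightarrow$(iv)$\Rightarrow$(iii)$\Rightarrow$(v)$\Rightarrow$(ii). The implication (ii)$\Rightarrow$(i) is immediate, since the diagram displayed in (ii) is exactly the special case of the diagram~\eqref{diag170702a} from Definition~\ref{defn170702a} with one weak equivalence followed by one further morphism in each row.

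For (i)$\Rightarrow$(iv), I would apply Remark~\ref{disc170702a} directly to the diagram supplied by the hypothesis $\hat f\divs\hat g$. Translating through the functor $\mathfrak A_A$ and inverting the images of the weak equivalences (which are isomorphisms in $A$ by Theorem~\ref{thm170501a}), and then invoking Remark~\ref{disc171015a} to see that the resulting diagram in $A$ automatically commutes, we obtain the two divisibilities $\prod_p a_p\cdot\prod_n x_n\mid\prod_q b_q\cdot\prod_i v_i$ and $\prod_q b_q\cdot\prod_j w_j\mid\prod_p a_p\cdot\prod_m y_m$. Setting $a:=\prod_p a_p$ and $b:=\prod_q b_q$ yields (iv).

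For (iv)$\Rightarrow$(iii), I would multiply the two divisibilities from (iv) and then cancel $ab$ by cancellativity. For (iii)$\Rightarrow$(v), substituting the defining equations $\prod_m y_m=r\prod_n x_n$ and $\prod_j w_j=s\prod_i v_i$ into the divisibility in (iii) gives $s\cdot\prod_n x_n\cdot\prod_i v_i\mid r\cdot\prod_n x_n\cdot\prod_i v_i$, and cancellativity of $A$ delivers $s\mid r$.

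The heart of the proof is (v)$\Rightarrow$(ii), which I expect to be the main obstacle since one must guess the correct $a$ and $b$. The key idea is the choice $a:=\prod_i v_i$ and $b:=\prod_n x_n$, after which the diagram in (ii) can be produced explicitly. The morphisms $\hat\mu\colon(a\prod_n x_n)\to(a)\otimes(x_n)$ and $\hat\eta\colon(b\prod_j w_j)\to(b)\otimes(w_j)$ are factorization morphisms out of one-tuples and hence lie in $\catw(A)$ by Remark~\ref{disc170501a}. For $\hat\alpha$, the equality $a\prod_n x_n = \prod_i v_i\cdot\prod_n x_n = b\prod_i v_i$ supplies the required divisibility trivially. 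For $\hat\beta$, one computes $b\prod_j w_j=\prod_n x_n\cdot s\prod_i v_i$ and $a\prod_m y_m=\prod_i v_i\cdot r\prod_n x_n$, so the hypothesis $s\mid r$ produces $b\prod_j w_j\mid a\prod_m y_m$ and hence the morphism $\hat\beta$. All other verifications reduce to routine applications of cancellativity in $A$ and the definition of a morphism in $\catf(A)$.
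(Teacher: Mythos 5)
Your proof is correct and follows essentially the same route as the paper's: the same reduction of (i) to divisibility data via Remark~\ref{disc170702a}, the same cancellation arguments linking the divisibility conditions, and the same key choice $a=\prod_iv_i$, $b=\prod_nx_n$ when building the diagram for (ii). The only cosmetic difference is that you close the cycle through (v) rather than treating (iii)$\iff$(v) separately, which changes nothing of substance.
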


\begin{proof}
The 
implication~$\eqref{prop170708a2'}\implies\eqref{prop170708a1}$
follows by definition of $\hat f\divs\hat g$,
and the implication~$\eqref{prop170708a1}\implies\eqref{prop170708a3}$
is from the first paragraph of Remark~\ref{disc170702a} with $a=\prod_pa_p$ and $b=\prod_qb_q$.

$\eqref{prop170708a3}\implies\eqref{prop170708a4}$
Assume that there are elements $a,b\in A$ such that $a\prod_nx_n\mid b\prod_iv_i$ and $b\prod_jw_j\mid a\prod_my_m$.
Multiply these divisibility relations by $\prod_jw_j$ and $\prod_iv_i$, respectively, to conclude that
$$
\textstyle
a\prod_nx_n\cdot\prod_jw_j\mid b\prod_iv_i\cdot\prod_jw_j
\mid a\prod_iv_i\cdot\prod_my_m.
$$
It follows that $a\prod_nx_n\cdot\prod_jw_j\mid a\prod_iv_i\cdot\prod_my_m$,
hence $\prod_nx_n\cdot\prod_jw_j\mid \prod_iv_i\cdot\prod_my_m$ since $A$ is cancellative.

$\eqref{prop170708a4}\implies\eqref{prop170708a2'}$
Assume that $\prod_nx_n\cdot\prod_jw_j\mid\prod_iv_i\cdot\prod_my_m$.
This yields the  morphism $\widehat{\id_{[1]}}$ in the following diagram in $\catf(A)$.
$$
\xymatrix@C=7mm{
\left(\prod_iv_i\right)\otimes(x_n)\ar[d]^-{(\prod_iv_i)\otimes\hat g}
&&\ar[ll]_-{\hat{\mu}}^-{\in\catw(A)}\left(\prod_iv_i\cdot\prod_nx_n\right)\ar[r]^-{\widehat{\alpha}}
&\left(\prod_nx_n\right)\otimes(v_i)\ar[d]^-{(b)\otimes\hat f}
\\
\left(\prod_iv_i\right)\otimes(y_m)
&\ar[l]_-{\widehat{\gamma}}\left(\prod_iv_i\cdot\prod_my_m\right)
&\ar[l]_-{\widehat{\id_{[1]}}}\left(\prod_nx_n\cdot\prod_jw_j\right)\ar[r]^-{\hat{\eta}}_-{\in\catw(A)}
&\left(\prod_nx_n\right)\otimes(w_j)
}
$$
The morphisms $\hat\mu$, $\hat\alpha$, $\hat\eta$, and $\hat\gamma$ are factorization morphisms.
Thus, condition~\eqref{prop170708a2'} is satisfied with
$a=\prod_iv_i$
and
$b=\prod_nx_n$.

$\eqref{prop170708a4}\iff\eqref{prop170708a5}$
From the conditions $\prod_my_m=r\prod_nx_n$ and $\prod_jw_j=s\prod_iv_i$ it is straightforward to show that 
$\prod_nx_n\cdot\prod_jw_j\mid\prod_iv_i\cdot\prod_my_m$ if and only if $s\mid r$.
\end{proof}

Next, we present
some applications of Theorem~\ref{prop170708a}.
In the first one, parts~\eqref{prop170707a1}--\eqref{prop170707a2} say that
weak divisibility is a pre-order on the set $\operatorname{Mor}(\catf(A))$.
Part~\eqref{prop170707a3} says that weak equivalences are strongly minimal in this ordering;
see Theorem~\ref{prop170708b} for more about this.
It is not hard to prove Proposition~\ref{prop170707a} by definition, without invoking Theorem~\ref{prop170708a}, but 
that proof is longer, and this one showcases Theorem~\ref{prop170708a} nicely.

\begin{prop}\label{prop170707a}
Let
$\hat f\colon (v_i)\to(w_j)$ and $\hat g\colon (x_n)\to(y_m)$ and $\hat h\colon (z_\ell)\to(t_k)$
be morphisms in $\catf(A)$.
\begin{enumerate}[\rm(a)]
\item \label{prop170707a1}
We have $\hat f\divs\hat f$.
\item \label{prop170707a2}
If $\hat f\divs\hat g$ and $\hat g\divs\hat h$, then $\hat f\divs\hat h$.
\item \label{prop170707a3}
If $\hat f$ is a weak equivalence, e.g., if $\hat f$ is an isomorphism, then $\hat f\divs\hat g$. 
\end{enumerate}
\end{prop}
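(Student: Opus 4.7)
The plan is to invoke condition~\eqref{prop170708a5} of Theorem~\ref{prop170708a} as the operational criterion for weak divisibility, which converts everything into a divisibility problem in $A$ and thus reduces each of the three statements to a one-line check.

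For each of the morphisms under consideration, Remark~\ref{disc171015b} furnishes unique elements $s,r,q\in A$ with
\[
\textstyle\prod_jw_j=s\prod_iv_i,\qquad\prod_my_m=r\prod_nx_n,\qquad\prod_kt_k=q\prod_\ell z_\ell.
\]
For part~\eqref{prop170707a1}, reflexivity $s\mid s$ is automatic, so Theorem~\ref{prop170708a}\eqref{prop170708a5} yields $\hat f\divs\hat f$. For part~\eqref{prop170707a2}, the hypotheses $\hat f\divs\hat g$ and $\hat g\divs\hat h$ translate via the same criterion into $s\mid r$ and $r\mid q$ in $A$; transitivity of the divisibility relation on $A$ gives $s\mid q$, which is exactly the content of $\hat f\divs\hat h$.

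For part~\eqref{prop170707a3}, when $\hat f\in\catw(A)$ the implication $\eqref{lem190110a0}\Rightarrow\eqref{lem190110a1}$ of Lemma~\ref{lem190110a} (applied to $\hat f$ with its associated $s$) forces $s$ to be invertible in $A$. An invertible element divides every element of $A$, so $s\mid r$ for the $r$ attached to an arbitrary $\hat g$; Theorem~\ref{prop170708a}\eqref{prop170708a5} then gives $\hat f\divs\hat g$. The case of $\hat f$ an isomorphism follows because isomorphisms lie in $\catw(A)$ by Remark~\ref{disc170501a}.

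No significant obstacle is anticipated: the only subtlety is making sure the elements $s,r,q$ exist even when one of the tuples is $\emptytuple$, but this is precisely what Remark~\ref{disc171015b} records (and in the $(w_j)=\emptytuple$ case Proposition~\ref{empty_tuple_homset} forces each $v_i$ to be invertible, so $s=(\prod_iv_i)^{-1}$ is well defined). With Theorem~\ref{prop170708a}\eqref{prop170708a5} in hand, the proof is essentially the observation that the passage $\hat f\mapsto s$ converts weak divisibility of morphisms into divisibility of monoid elements, and reflexivity, transitivity, and the divisibility property of units on $A$ carry over verbatim.
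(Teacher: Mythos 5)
Your proposal is correct and follows essentially the same route as the paper's proof: both reduce all three parts to condition~\eqref{prop170708a5} of Theorem~\ref{prop170708a}, using reflexivity and transitivity of divisibility in $A$ for parts~\eqref{prop170707a1} and~\eqref{prop170707a2}, and Lemma~\ref{lem190110a} to get invertibility of $s$ for part~\eqref{prop170707a3}. No gaps.
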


\begin{proof}
Let $r,s,q\in A$ be such that $\prod_my_m=r\prod_nx_n$ and $\prod_jw_j=s\prod_iv_i$ and $\prod_kt_k=q\prod_\ell z_\ell$.
Part~\eqref{prop170707a1}
follows from Theorem~\ref{prop170708a} because $s\mid s$.
Part~\eqref{prop170707a2} follows similarly: if $\hat f\divs\hat g$ and $\hat g\divs\hat h$, then 
$s\mid r$ and $r\mid q$, hence $s\mid q$, so $\hat f\divs\hat h$.

\eqref{prop170707a3}
Assume that $\hat f$ is a weak equivalence.
Lemma~\ref{lem190110a} implies that $s$ is invertible, so $s\mid r$,
hence $\hat f\divs\hat g$ by Theorem~\ref{prop170708a}.
\end{proof}

Our next  application of Theorem~\ref{prop170708a}
complements Proposition~\ref{prop170707a}\eqref{prop170707a3}.

\begin{thm}\label{prop170708b}
Let
$\hat f\colon (v_i)\to(w_j)$ 
be a morphism  in $\catf(A)$.
The following conditions are equivalent.
\begin{enumerate}[\rm(i)]
\item \label{prop170708b2a}
We have 
$\hat f\divs\hat g$ for all morphisms $\hat g$ in $\catf(A)$.
\item \label{prop170708b2b}
We have 
$\hat f\divs\hat g$ for some weak equivalence $\hat g$ in $\catf(A)$.
\item \label{prop170708b2c}
The morphism $\hat f$ is a weak equivalence.
\end{enumerate}
\end{thm}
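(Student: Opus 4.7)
The plan is to prove the cycle $\eqref{prop170708b2c}\implies\eqref{prop170708b2a}\implies\eqref{prop170708b2b}\implies\eqref{prop170708b2c}$, with the bulk of the work living in the last implication, which is where Theorem~\ref{prop170708a} does its job.

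First, $\eqref{prop170708b2c}\implies\eqref{prop170708b2a}$ is just Proposition~\ref{prop170707a}\eqref{prop170707a3}, which says that any weak equivalence weakly divides every morphism; no further argument is needed.

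For $\eqref{prop170708b2a}\implies\eqref{prop170708b2b}$ I would simply exhibit a single weak equivalence to plug into condition~\eqref{prop170708b2a}. The cleanest choice is $\hat g=\id_{(v_i)}\colon(v_i)\to(v_i)$, which is an isomorphism and hence a weak equivalence by Remark~\ref{disc170501a}; then \eqref{prop170708b2a} gives $\hat f\divs\id_{(v_i)}$, establishing \eqref{prop170708b2b}.

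The substantive direction is $\eqref{prop170708b2b}\implies\eqref{prop170708b2c}$. Suppose $\hat f\divs\hat g$ for some weak equivalence $\hat g\colon(x_n)\to(y_m)$, and let $r,s\in A$ be the unique elements (guaranteed by Remark~\ref{disc171015b}) with $\textstyle\prod_my_m=r\prod_nx_n$ and $\prod_jw_j=s\prod_iv_i$. Applying Theorem~\ref{prop170708a} \eqref{prop170708a1}$\Leftrightarrow$\eqref{prop170708a5} to $\hat f\divs\hat g$ yields $s\mid r$ in $A$. On the other hand, since $\hat g\in\catw(A)$, Lemma~\ref{lem190110a} tells us that $r$ is invertible; and any divisor of an invertible element in a cancellative monoid is itself invertible, so $s$ is invertible. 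Applying Lemma~\ref{lem190110a} once more — this time to $\hat f$ with its witnessing element $s$ — gives $\hat f\in\catw(A)$, which is \eqref{prop170708b2c}. The only real subtlety to keep an eye on is the degenerate case where one of the codomains is $\emptytuple$, but this is already subsumed by the phrasing of Lemma~\ref{lem190110a} and the universal existence of $r,s$ via Remark~\ref{disc171015b}, so no separate case analysis is needed.
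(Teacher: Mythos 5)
Your proof is correct and follows essentially the same route as the paper: the paper also gets (iii)$\implies$(i) from Proposition~\ref{prop170707a}\eqref{prop170707a3}, treats (i)$\implies$(ii) as trivial (you merely make the witnessing identity morphism explicit), and proves (ii)$\implies$(iii) by combining Theorem~\ref{prop170708a} (giving $s\mid r$) with Lemma~\ref{lem190110a} (invertibility of $r$, hence of $s$, hence $\hat f\in\catw(A)$). No changes needed.
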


\begin{proof}
The  implication~$\eqref{prop170708b2a}\implies\eqref{prop170708b2b}$ is trivial,
and~$\eqref{prop170708b2c}\implies\eqref{prop170708b2a}$ is from Proposition~\ref{prop170707a}\eqref{prop170707a3}.

\eqref{prop170708b2b}$\implies$\eqref{prop170708b2c} Assume that there is a weak equivalence $\hat g\colon (x_n)\to(y_m)$
in $\catf(A)$  such that $\hat f\divs\hat g$.
Let $r,s\in A$ be such that $\prod_my_m=r\prod_nx_n$ and $\prod_jw_j=s\prod_iv_i$.
Lemma~\ref{lem190110a} implies that $r$ is invertible.
Theorem~\ref{prop170708a} says that $s$ divides the invertible element $r$, so $s$ is invertible as well,
so another application of Lemma~\ref{lem190110a} implies that $\hat f$ is a weak equivalence.
\end{proof}

Proposition~\ref{prop170707a}
shows that the weak divisibility relation on the set of morphisms in
$\catf(A)$ is a pre-order. Our next definition is the induced equivalence relation.

\begin{defn}\label{defn170715a}
Let $\hat f\colon (v_i)\to(w_j)$ and $\hat g\colon (x_n)\to(y_m)$ be morphisms in $\catf(A)$.
We say that $\hat f$ and $\hat g$ are
\emph{weakly associate}
if
$\hat f\divs\hat g$ and $\hat g\divs\hat f$.
\end{defn}

The notions from this section provide several characterizations of the weakly associate relation.
We single out the next 
ones,
which follows directly from Theorem~\ref{prop170708a}, for 
perspective
in Theorem~\ref{prop170710y} below.

\begin{prop}\label{prop170715a}
Let
$\hat f\colon (v_i)\to(w_j)$ and $\hat g\colon (x_n)\to(y_m)$ 
be morphisms in $\catf(A)$. 
Let $r,s\in A$ be such that $\prod_my_m=r\prod_nx_n$ and $\prod_jw_j=s\prod_iv_i$.
Then the following conditions are equivalent.
\begin{enumerate}[\rm(i)]
\item \label{prop170715a1}
The morphisms $\hat f$ and $\hat g$ are weakly associate.
\item \label{prop170715a4}
The elements $r$ and $s$ divide each other in $A$, i.e., we have $r\mid s$ and $s\mid r$.
\item \label{prop170715a5}
We have $r=vs$ for some invertible element $v\in A$.
\item \label{prop170715a2}
The elements $\prod_nx_n\cdot\prod_jw_j$ and $\prod_iv_i\cdot\prod_my_m$ divide each other in $A$.
\item \label{prop170715a3}
We have $\prod_nx_n\cdot\prod_jw_j=u\cdot\prod_iv_i\cdot\prod_my_m$
for some invertible element $u\in A$.
\end{enumerate}
\end{prop}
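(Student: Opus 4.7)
The plan is to chain the equivalences via Theorem~\ref{prop170708a}, exploiting the symmetry between the roles of $\hat f$ and $\hat g$, and then translate ``mutual divisibility'' into ``differ by a unit'' using cancellativity.

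First I would prove $\eqref{prop170715a1}\iff\eqref{prop170715a4}$. By Definition~\ref{defn170715a}, the morphisms $\hat f$ and $\hat g$ are weakly associate precisely when $\hat f\divs\hat g$ and $\hat g\divs\hat f$. Applying Theorem~\ref{prop170708a}\eqref{prop170708a5} to $\hat f\divs\hat g$ gives $s\mid r$, while swapping the roles of $\hat f$ and $\hat g$ in that same equivalence gives $\hat g\divs\hat f\iff r\mid s$. Thus $\eqref{prop170715a1}\iff\eqref{prop170715a4}$.

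Next I would establish $\eqref{prop170715a4}\iff\eqref{prop170715a5}$. The forward direction uses cancellativity of $A$ in a standard way: write $r=vs$ and $s=wr$, then $r=vwr=r\cdot vw$, so $1=vw$ by cancellation, making $v$ (and $w$) invertible. The reverse direction is immediate, since if $r=vs$ with $v$ invertible, then $s=v^{-1}r$, so $r\mid s$ and $s\mid r$.

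For $\eqref{prop170715a1}\iff\eqref{prop170715a2}$, I would again apply Theorem~\ref{prop170708a}, this time using condition~\eqref{prop170708a4}: we have $\hat f\divs\hat g\iff\prod_nx_n\cdot\prod_jw_j\mid\prod_iv_i\cdot\prod_my_m$, and swapping roles gives $\hat g\divs\hat f\iff\prod_iv_i\cdot\prod_my_m\mid\prod_nx_n\cdot\prod_jw_j$. Conjoining these yields \eqref{prop170715a2}. Finally, $\eqref{prop170715a2}\iff\eqref{prop170715a3}$ is again the cancellative monoid fact that two elements divide each other if and only if they differ by a unit, proved exactly as in $\eqref{prop170715a4}\iff\eqref{prop170715a5}$.

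I do not expect a serious obstacle here: everything is a direct unpacking of Theorem~\ref{prop170708a} combined with the elementary observation about mutual divisibility in a cancellative monoid. The only mild point of care is to be consistent about which side of the weak-divisibility relation corresponds to which of the divisibility relations $r\mid s$ or $s\mid r$; beyond that, the proof is essentially bookkeeping.
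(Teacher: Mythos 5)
Your proof is correct and follows essentially the same route as the paper, which simply asserts that the proposition ``follows directly from Theorem~\ref{prop170708a}'' and leaves the unpacking to the reader. Your careful tracking of which divisibility ($r\mid s$ versus $s\mid r$) corresponds to which direction of weak divisibility, together with the standard cancellativity argument for mutual divisibility, fills in exactly the details the paper omits.
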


\section{Weak
Irreducibility}\label{sec170501a}

\begin{assumptions}
In this section, again  $A$ is a divisibility monoid,
and we say that a morphism $\hat f$ is a weak equivalence provided that $\hat f\in\catw(A)$.
\end{assumptions}

Here
we treat a notion of irreducibility  for morphisms and objects in $\catf(A)$.
Note that the definition of an irreducible tuple 
fits with a fundamental idea behind our construction, that factorization in $A$ is tracked by morphisms in $\catf(A)$.

\begin{defn}\label{defn170501e}
A monoid element $a\in A$ is \emph{irreducible} if it is not invertible and only has trivial factorizations in $A$: if $a=bc$ in $A$, then either $b$
or $c$ is invertible in $A$.
We shall say that a morphism $\hat{f}$ in $\mathcal{F}(A)$ is \textit{weakly irreducible} when $\hat{f}$ is not a weak equivalence and for every factorization 
$\hat{f} = \hat{g} \circ \hat{h}$ either $\hat{g}$ or $\hat{h}$ is a weak equivalence. 
We say an object $(x_n) \in \mathcal{F}(A)$ is 
\textit{weakly irreducible} 
if  the
morphism $(1) \to (x_n)$ is 
weakly
irreducible.
\end{defn}

\begin{disc}
\label{disc170501c}
If $\hat f\colon(x_n)\to(y_m)$ is a morphism in $\catf(A)$ with $(x_n)=\emptytuple$ or $(y_m)=\emptytuple$,
then  $\hat f$ is a weak equivalence, so in particular $\hat f$ is not 
weakly
irreducible. 
For example, this shows that $\emptytuple$ is not 
weakly irreducible  in $\catf(A)$.

The term ``weakly irreducible morphism'' is meant to suggest the usual notion of irreducibility,
modified by weak equivalences.
Of course, one could introduce variations on this,
for instance, a stronger notion declaring a morphism $\hat{f}$ in $\mathcal{F}(A)$ to be ``irreducible'' when $\hat{f}$ is not an isomorphism and for every factorization 
$\hat{f} = \hat{g} \circ \hat{h}$ either $\hat{g}$ or $\hat{h}$ is an isomorphism. 
However, this version is too strong for our purposes.
\end{disc}

\begin{lem}\label{lem190115a}
Consider morphisms in $\catf(A)$
$$(v_i)\xra[\in\catw(A)]{\hat\epsilon}(r_a)\xra{\hat{\eta}}(s_b)
\xra[\in\catw(A)]{\hat\phi}(w_j)$$ 
such that $\hat\epsilon$ and $\hat\phi$ are weak equivalences.
If the composition $\hat f=\hat\phi\circ\hat\eta\circ\hat\epsilon$
is weakly irreducible, then so is $\hat\eta$.
\end{lem}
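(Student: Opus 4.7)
The plan is to argue by contrapositive: assume $\hat\eta$ fails to be weakly irreducible, and show $\hat f=\hat\phi\circ\hat\eta\circ\hat\epsilon$ also fails to be weakly irreducible. According to Definition~\ref{defn170501e}, $\hat\eta$ failing to be weakly irreducible means one of two things: either (a) $\hat\eta$ itself is a weak equivalence, or (b) there is a factorization $\hat\eta=\hat g\circ\hat h$ in which neither $\hat g$ nor $\hat h$ is a weak equivalence. I would handle these two cases separately, with the 2-of-3 property from Proposition~\ref{wk_equiv} doing essentially all of the work.

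In case (a), since $\hat\epsilon,\hat\eta,\hat\phi$ are all in $\catw(A)$, and $\catw(A)$ is closed under composition (also by Proposition~\ref{wk_equiv} or directly from Lemma~\ref{lem190110a}), the composite $\hat f$ is itself a weak equivalence, hence not weakly irreducible.

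In case (b), writing $\hat f=(\hat\phi\circ\hat g)\circ(\hat h\circ\hat\epsilon)$ exhibits a candidate factorization of $\hat f$ into two morphisms, neither of which is a weak equivalence. Indeed, if $\hat\phi\circ\hat g$ were a weak equivalence, then since $\hat\phi\in\catw(A)$, the 2-of-3 property would force $\hat g\in\catw(A)$, contradicting the hypothesis of case (b); symmetrically, $\hat h\circ\hat\epsilon$ is not a weak equivalence because $\hat\epsilon\in\catw(A)$. So $\hat f$ admits a factorization in which neither factor is a weak equivalence, contradicting weak irreducibility of $\hat f$.

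The main (only) subtlety is the bookkeeping of the 2-of-3 property to rule out that $\hat\phi\circ\hat g$ or $\hat h\circ\hat\epsilon$ could slip into $\catw(A)$; once this is observed, the proof collapses to two short paragraphs. No further structural facts about $\catf(A)$ beyond the closure/2-of-3 behavior of $\catw(A)$ are needed, so I do not expect any real obstacle.
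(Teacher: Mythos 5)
Your proof is correct and follows essentially the same route as the paper's: the key step in both is rewriting $\hat f=(\hat\phi\circ\hat g)\circ(\hat h\circ\hat\epsilon)$ and applying the 2-of-3 property of $\catw(A)$ to see that neither factor is a weak equivalence. The only difference is organizational — you argue by contrapositive with two explicit cases, while the paper assumes $\hat f$ weakly irreducible, first deduces via 2-of-3 that $\hat\eta$ is not a weak equivalence, and then derives a contradiction from a bad factorization of $\hat\eta$ — but the mathematical content is identical.
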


\begin{proof}
Assume that $\hat f$ is weakly irreducible.
Then $\hat f$ is not a weak equivalence, so the 2-of-3 condition for $\catw(A)$ implies that $\hat\eta$
is not a weak equivalence.
Suppose by way of contradiction that $\hat\eta$ is not weakly irreducible, so that it factors as
$\hat\eta=\hat h\circ\hat g$  where neither $\hat h$ nor $\hat g$ is a weak equivalence. 
Plug this factorization into the definition of $\hat f$ to conclude that
$$\hat f
=\hat\phi\circ\hat\eta
\circ\hat\epsilon
=(\hat\phi\circ
\hat h)\circ(\hat g\circ\hat\epsilon)
$$
Since $\hat\epsilon$ and $\hat\phi$ are both weak equivalences, 
the 2-of-3 condition for $\catw(A)$ implies that neither $\hat\phi\circ
\hat h$ nor $\hat g\circ\hat\epsilon$ is a weak equivalence, contradicting the fact that $\hat f$ is weakly irreducible.
This establishes the result.
\end{proof}

The next result shows, e.g., that the morphism $\widehat{\id_{[1]}}\colon(2)\to(6)$ from
Example~\ref{ex170706azzz}
is weakly irreducible.
And it gives another indication of how the morphisms in $\catw(A)$ can see factorization properties in $A$.

\begin{thm}\label{lem170501a} 
For an element $r \in A$, the following conditions are equivalent.
\begin{enumerate}[\rm(i)]
\item \label{lem170501a1}
$r$
is irreducible in $A$,
\item \label{lem170501a5}
each morphism $\hat f\colon (x_n)\to (y_m)$ where $r\cdot\prod_nx_n=\prod_my_m$ is weakly irreducible,
\item \label{lem170501a2} the 1-tuple
$(r)$
is 
weakly
irreducible in $\mathcal{F}(A)$, 
and
\item \label{lem170501a0}
some morphism $\hat f\colon (x_n)\to (y_m)$ with $r\cdot\prod_nx_n=\prod_my_m$ is weakly irreducible.
\end{enumerate}
\end{thm}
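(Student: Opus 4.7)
The plan is to establish the cycle $\eqref{lem170501a1}\implies\eqref{lem170501a5}\implies\eqref{lem170501a2}\implies\eqref{lem170501a0}\implies\eqref{lem170501a1}$. The two middle implications are immediate by specializing to the morphism $\widehat{\id_{[1]}}\colon(1)\to(r)$, which satisfies $r\cdot 1=r$ and whose weak irreducibility is, by Definition~\ref{defn170501e}, the same as weak irreducibility of the $1$-tuple $(r)$.

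For $\eqref{lem170501a1}\implies\eqref{lem170501a5}$, I would start by using Lemma~\ref{lem190110a} to get $\hat f\notin\catw(A)$ from the non-invertibility of $r$, and Proposition~\ref{empty_tuple_homset} to rule out the degenerate empty-tuple cases (irreducibility prevents $r$ from being invertible, so neither $(x_n)$ nor $(y_m)$ can be empty). Given any factorization $\hat f=\hat g\circ\hat h$ through some $(z_p)$, the same considerations yield $(z_p)\neq\emptytuple$; then the multipliers $s,t\in A$ of $\hat h$ and $\hat g$ (from Remark~\ref{disc171015b}) satisfy $st=r$ by cancellativity of $A$, and irreducibility of $r$ makes one of them invertible, placing one of $\hat h,\hat g$ in $\catw(A)$ via Lemma~\ref{lem190110a}.

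The substantive direction is $\eqref{lem170501a0}\implies\eqref{lem170501a1}$. Let $\hat f$ be as in~\eqref{lem170501a0}; then $r$ is not invertible (else $\hat f$ would be a weak equivalence by Lemma~\ref{lem190110a}) and $(x_n),(y_m)$ are non-empty. Suppose toward contradiction that $r=st$ with neither factor invertible. Introducing the elements $r_n\in A$ from Remark~\ref{disc171015b} with $r_nx_n=\prod_{m\in f^{-1}(n)}y_m$ and $\prod_n r_n=r$, I would build a non-trivial factorization of $\hat f$ through a divisibility-morphism intermediate. If at least two of the $r_n$ are non-invertible, I would pick one such $r_{n_0}$ and use the intermediate tuple $(x_1,\ldots,r_{n_0}x_{n_0},\ldots,x_N)$: the first factor is the divisibility morphism $\widehat{\id_{[N]}}$ with multiplier $r_{n_0}$, and the second has underlying function $f$ with multiplier $\prod_{n\neq n_0}r_n$, both non-invertible. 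If exactly one $r_{n_0}$ is non-invertible, then $r$ is associate to $r_{n_0}$, forcing $r_{n_0}$ to be reducible; writing $r_{n_0}=uv$ non-trivially, the intermediate $(x_1,\ldots,ux_{n_0},\ldots,x_N)$ yields factors of multipliers $u$ and $r/u$ (associate to $v$), both non-invertible. The remaining case (all $r_n$ invertible) would make $r$ invertible, contradicting our setup.

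The main obstacle is verifying in each case that the proposed intermediate tuple actually admits a second factor $\hat g$ with underlying function $f$, so that the composition $\hat g\circ\hat h$ equals $\hat f$. This reduces to checking the morphism condition at position $n_0$ (namely $r_{n_0}x_{n_0}\mid r_{n_0}x_{n_0}$ in the first case, or $ux_{n_0}\mid uvx_{n_0}$ in the second), with the other positions inherited from $\hat f$ and the overall multiplier computed directly from the modified tuple. The translations between invertibility of multipliers and membership in $\catw(A)$ come uniformly from Lemma~\ref{lem190110a}.
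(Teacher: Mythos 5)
Your proof is correct, and it establishes the same cycle $\eqref{lem170501a1}\implies\eqref{lem170501a5}\implies\eqref{lem170501a2}\implies\eqref{lem170501a0}\implies\eqref{lem170501a1}$ that the paper uses; your $\eqref{lem170501a1}\implies\eqref{lem170501a5}$ and the two specializations to $\widehat{\id_{[1]}}\colon(1)\to(r)$ match the paper's argument essentially verbatim. Where you genuinely diverge is in the substantive implication $\eqref{lem170501a0}\implies\eqref{lem170501a1}$. The paper first runs $\hat f$ through the canonical decomposition $\hat\phi\circ\widehat{\id_{[P]}}\circ\hat\epsilon$ of Proposition~\ref{prop170423a}, uses Lemma~\ref{lem190115a} to transfer weak irreducibility to the middle divisibility morphism, and then proves its two claims (all but one $a_p$ invertible; the remaining $a_{p_0}$ irreducible) on that divisibility morphism, invoking Proposition~\ref{prop170503c} along the way. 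You instead work directly with the multipliers $r_n$ from Remark~\ref{disc171015b} and factor $\hat f$ itself through the modified tuples $(x_1,\ldots,r_{n_0}x_{n_0},\ldots,x_N)$ and $(x_1,\ldots,ux_{n_0},\ldots,x_N)$, verifying the order-constraint at position $n_0$ by hand. The combinatorial core is the same in both arguments --- split part of the multiplier off at a single coordinate to manufacture a non-trivial factorization --- but yours is more self-contained, needing only Lemma~\ref{lem190110a} and Remark~\ref{disc171015b}, whereas the paper's detour through divisibility morphisms buys reusable statements (Proposition~\ref{prop170501b}, Theorem~\ref{prop170501a}) that it exploits in later sections. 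Two small points you may want to make explicit: in your first case, the second factor's multiplier $\prod_{n\neq n_0}r_n$ is non-invertible because any divisor of an invertible element of $A$ is invertible; and the composition convention $\hat g\circ\hat h=\widehat{h\circ g}$ is what makes your intermediate factorization literally compose back to $\hat f$, since $\id_{[N]}\circ f=f$.
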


\begin{proof}
The implications $\eqref{lem170501a5}\implies\eqref{lem170501a2}
\implies\eqref{lem170501a0}$
are straightforward.

$\eqref{lem170501a1}\implies\eqref{lem170501a5}$
Assume that $r$ is irreducible in $A$.
Consider a morphism 
$\hat f\colon (x_n)\to (y_m)$ such that $r\cdot\prod_nx_n=\prod_my_m$.
The fact that $r$ is not invertible implies that the given morphism is not a weak equivalence by Lemma~\ref{lem190110a}.
To show that $\hat f$ is weakly irreducible, decompose $\hat f$ as a composition
$$(x_n)\xra{\hat g}(z_\ell)\xra{\hat h}(y_m).$$
Let $a,b\in A$ be such that $\prod_\ell z_\ell=a\prod_nx_n$ and 
$$\textstyle r\cdot\prod_nx_n=\prod_my_m=b\prod_\ell z_\ell=ab\prod_nx_n.$$
Cancellation implies that $r=ab$.
Since $r$ is irreducible, we conclude that $a$ or $b$ is invertible in $A$,
so $\hat g$ or $\hat h$ is a weak equivalence by Lemma~\ref{lem190110a}, as desired.

$\eqref{lem170501a0}\implies\eqref{lem170501a1}$
Assume that $\hat f\colon (x_n)\to (y_m)$ is a weakly irreducible morphism in $\catf(A)$ with $r\cdot\prod_nx_n=\prod_my_m$.
In particular, $\hat f$ is not a weak equivalence, so $(y_m)\neq\emptytuple$, and hence $(x_n)\neq\emptytuple$.
Consider the decomposition $\hat f=\hat\phi\circ\widehat{\id_{[P]}}^{(z_p)}_{(a_pz_p)}
\circ\hat\epsilon$ from Proposition~\ref{prop170423a}
\begin{equation}\label{eq190114a}
(x_n)\xra[\in\catw(A)]{\hat\epsilon}(z_p)\xra{\widehat{\id_{[P]}}^{(z_p)}_{(a_pz_p)}}
(a_pz_p)
\xra[\in\catw(A)]{\hat\phi}(y_m).
\end{equation}
Recall the superscript/subscript notation for $\widehat{\id_{[P]}}$ from Definition~\ref{defn180217a}.

Claim 1: There is an element $p_0\in [P]$ such that $a_p$ is invertible in $A$ for all $p\neq p_0$.
Indeed, Proposition~\ref{prop170503c} implies that some $a_p$ is non-invertible in $A$.
Let $p_0$ be the smallest such $p\in[P]$, so that $a_{p_0}$ is non-invertible in $A$ but $a_p$ is invertible in $A$ for all $p<p_0$.
The morphism $\widehat{\id_{[P]}}^{(z_p)}_{(a_pz_p)}$ factors as 
the composition of the next two morphisms.
\begin{equation}\label{eq190114b}
(z_p)\xra[\notin\catw(A)]{\widehat{\id_{[P]}}^{(z_p)}_{(z_p')}}
\underbrace{(z_1,\ldots,z_{p_0-1},a_{p_0}z_{p_0},z_{p_0+1},\ldots,z_P)}_{=(z_p')}
\xra{\widehat{\id_{[P]}}^{(z_p')}_{(a_pz_p)}}(a_pz_p)
\end{equation}
The fact that $a_{p_0}$ is non-invertible implies that the first factor $\widehat{\id_{[P]}}^{(z_p)}_{(z_p')}$ is not a weak equivalence. 
Since $\hat\epsilon$ is a weak equivalence, the 2-of-3 condition implies that $\widehat{\id_{[P]}}^{(z_p)}_{(z_p')}\circ\hat\epsilon$ is
not a weak equivalence. Combining the decompositions~\eqref{eq190114a} and~\eqref{eq190114b}, we obtain the next decomposition of $\hat f$.
$$(x_n)\xra[\notin\catw(A)]{\widehat{\id_{[P]}}^{(z_p)}_{(z_p')}\circ\hat\epsilon}(z_p')\xra{\hat\phi\circ\widehat{\id_{[P]}}^{(z_p')}_{(a_pz_p)}}(y_m).$$
Since $\hat f$ is weakly irreducible and the first factor here is not a weak equivalence, it follows that the second factor must be a weak
equivalence. Proposition~\ref{prop170503c} implies that $a_p$ is invertible in $A$ for all $p\neq p_0$.
This establishes Claim~1.

Because of~\eqref{eq190114a}, the morphism $\widehat{\id_{[P]}}^{(z_p)}_{(a_pz_p)}$ is weakly irreducible by Lemma~\ref{lem190115a}.
However, it decomposes in~\eqref{eq190114b}, so another application of Lemma~\ref{lem190115a} shows that the morphism
$\widehat{\id_{[P]}}^{(z_p)}_{(z_p')}$ is weakly irreducible.

Claim 2: the element $a_{p_0}\in A$ is irreducible. By Claim~1, this element is non-invertible.
Suppose that $a_{p_0}=bc$ for some $b,c\in A$.
Then the weakly irreducible morphism $\widehat{\id_{[P]}}^{(z_p)}_{(z_p')}$ from~\eqref{eq190114b} factors as 
$$(z_p) \xra{\widehat{\id_{[P]}}} (z_1,\ldots,z_{p_0-1},bz_{p_0},\ldots,z_P)\xra{\widehat{\id_{[P]}}} 
(z_1,\ldots,z_{p_0-1},bcz_{p_0},\ldots,z_P)$$
so the fact that this composition is 
weakly
irreducible implies that
one of the morphisms in the composition
is a weak equivalence.
By definition, it follows that $b$ is invertible in $A$ or $c$ is invertible. This establishes Claim~2.

Recall that Proposition~\ref{prop170423a} implies that $r=u\prod_pa_p=a_{p_0}(u\prod_{p\neq p_0}a_p)$ for some invertible $u\in A$.
Claim~1 implies that the element $u\prod_{p\neq p_0}a_p$ is invertible in $A$.
Since $a_{p_0}$ is irreducible in $A$ by Claim~2 and $u\prod_{p\neq p_0}a_p$ is invertible, their product $r$ is irreducible in $A$, as desired.
\end{proof}

Our next result is for use in Theorem~\ref{prop170501a}.

\begin{prop}\label{prop170501b}
Let $P\in\bbn$, and
consider a divisibility morphism
$\widehat{\id_{[P]}}\colon(z_p)\to(a_pz_p)$ in $\catf(A)$. 
Such a morphism is 
weakly
irreducible if and only if there is an integer $p_0\in[P]$ such that
$a_{p_0}$ is irreducible in $A$ and for all $p\neq p_0$ the element $a_p$ is invertible in~$A$,
that is, if and only if the element $r=\prod_pa_p$ is irreducible in $A$.
\end{prop}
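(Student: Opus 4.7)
The proof plan relies heavily on Theorem~\ref{lem170501a}, which already characterizes weak irreducibility of an arbitrary morphism $\hat f\colon (x_n)\to (y_m)$ in terms of irreducibility of the unique element $r\in A$ with $r\prod_n x_n = \prod_m y_m$ from Remark~\ref{disc171015b}. My strategy is first to identify the relevant element $r$ for the divisibility morphism $\widehat{\id_{[P]}}\colon (z_p)\to (a_p z_p)$, and then to reduce the statement to a purely monoid-theoretic fact about products.

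\textbf{Step 1 (Reduction to Theorem~\ref{lem170501a}).} By definition of the divisibility morphism, we have $a_p z_p = a_p z_p$ for each $p$, so $\prod_p a_p z_p = \left(\prod_p a_p\right)\left(\prod_p z_p\right)$. Thus the element $r\in A$ of Remark~\ref{disc171015b} associated to $\widehat{\id_{[P]}}$ is exactly $r=\prod_p a_p$. Applying the equivalence $\eqref{lem170501a1}\iff\eqref{lem170501a0}$ of Theorem~\ref{lem170501a} to the morphism $\widehat{\id_{[P]}}$ shows immediately that $\widehat{\id_{[P]}}$ is weakly irreducible if and only if $r=\prod_p a_p$ is irreducible in $A$. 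This establishes the second ``if and only if'' in the proposition.

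\textbf{Step 2 (Characterizing irreducibility of $r$).} It remains to show that $r=\prod_p a_p$ is irreducible in $A$ if and only if there exists $p_0\in[P]$ with $a_{p_0}$ irreducible and $a_p$ invertible for all $p\neq p_0$. The converse direction is immediate: if $a_{p_0}$ is irreducible and $u:=\prod_{p\neq p_0}a_p$ is invertible, then $r=a_{p_0}u$ is a unit multiple of an irreducible element, hence irreducible. For the forward direction, suppose $r$ is irreducible. Then $r$ is non-invertible, so at least one factor, say $a_{p_0}$, is non-invertible. Writing $r = a_{p_0}\cdot\prod_{p\neq p_0}a_p$ and using irreducibility of $r$, one of the two factors is invertible; since $a_{p_0}$ is not, we get $\prod_{p\neq p_0}a_p$ invertible. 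In a commutative cancellative monoid, a product of elements is invertible only when each factor is (factor out any one $a_{p_1}$ from the identity $v\prod_{p\neq p_0}a_p=1$ to see $a_{p_1}\mid 1$), so every $a_p$ with $p\neq p_0$ is invertible. Finally, $a_{p_0}$ equals $r$ times an invertible element, so $a_{p_0}$ inherits irreducibility from $r$.

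\textbf{Main obstacle.} There is no serious obstacle here, since Theorem~\ref{lem170501a} does the categorical heavy lifting. The only subtlety is the standard monoid fact that, in a commutative cancellative monoid, a product is invertible iff each factor is, and that being a unit multiple of an irreducible is again irreducible; both follow from cancellativity in one or two lines. The proof should therefore be short, organized as: identify $r$, cite Theorem~\ref{lem170501a}, then give the monoid argument of Step~2.
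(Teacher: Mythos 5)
Your proposal is correct and matches the paper's proof: identify $r=\prod_pa_p$ as the element of Remark~\ref{disc171015b} for $\widehat{\id_{[P]}}$, then invoke Theorem~\ref{lem170501a} to equate weak irreducibility of the morphism with irreducibility of $r$. Your Step~2 merely spells out the elementary monoid equivalence (irreducibility of $\prod_pa_p$ versus one irreducible factor and the rest invertible) that the paper leaves implicit, and that argument is sound.
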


\begin{proof}
The element $r$ satisfies $\prod_p(a_pz_p)=r\prod_pz_p$, so Theorem~\ref{lem170501a} says that
$\widehat{\id_{[P]}}$ is weakly irreducible if and only if $r$ is irreducible in $A$, that is, if and only if 
there is an integer $p_0\in[P]$ such that
$a_{p_0}$ is irreducible in $A$ and for all $p\neq p_0$ the element $a_p$ is invertible in~$A$.
\end{proof}

Next, we give a characterization of irreducibility akin to Proposition~\ref{prop170503c}.

\begin{thm}\label{prop170501a}
Let $\hat f\colon(x_n)\to (y_m)$ be a morphism in $\catf(A)$ between non-empty tuples.
Consider the decomposition $\hat f=\hat\phi\circ\widehat{\id_{[P]}}\circ\hat\epsilon$ from Proposition~\ref{prop170423a}
$$(x_n)\xra{\hat\epsilon}(z_p)\xra{\widehat{\id_{[P]}}}(a_pz_p)
\xra{\hat\phi}(y_m).$$
Then 
the morphism $\hat f$ is 
weakly
irreducible
if and only if the divisibility morphism
$\widehat{\id_{[P]}}$ is 
weakly
irreducible (see Proposition~\ref{prop170501b}).
\end{thm}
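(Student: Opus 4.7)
My plan is to deduce this theorem as a direct consequence of three earlier results: Theorem~\ref{lem170501a} (weak irreducibility of $\hat f$ corresponds to irreducibility of the ratio $r\in A$), Proposition~\ref{prop170501b} (weak irreducibility of the divisibility morphism $\widehat{\id_{[P]}}$ corresponds to irreducibility of $\prod_p a_p$), and the final assertion of Proposition~\ref{prop170423a} (the element $r$ satisfying $r\prod_n x_n=\prod_m y_m$ equals $u\prod_p a_p$ for some invertible $u\in A$).

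First I would observe that since $(x_n)$ and $(y_m)$ are non-empty, Remark~\ref{disc171015b} supplies a unique $r\in A$ with $r\prod_n x_n=\prod_m y_m$, and Proposition~\ref{prop170423a} tells us $r=u\prod_p a_p$ for an invertible $u$. In particular, $r$ and $\prod_p a_p$ are weakly associate, so $r$ is irreducible in $A$ if and only if $\prod_p a_p$ is irreducible in $A$ (irreducibility is preserved under multiplication by an invertible element, using cancellativity of $A$).

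Next, I would apply Theorem~\ref{lem170501a} to the morphism $\hat f$: since $r\prod_n x_n=\prod_m y_m$, the morphism $\hat f$ is weakly irreducible if and only if $r$ is irreducible in $A$. Likewise, Proposition~\ref{prop170501b} says that the divisibility morphism $\widehat{\id_{[P]}}\colon(z_p)\to(a_pz_p)$ is weakly irreducible if and only if $\prod_p a_p$ is irreducible in $A$. Chaining these two equivalences with the equivalence from the previous paragraph gives that $\hat f$ is weakly irreducible if and only if $\widehat{\id_{[P]}}$ is, which is the claim.

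The argument is short, so I do not expect any serious obstacle; the only routine check is that irreducibility in the monoid $A$ is invariant under multiplication by an invertible element, which follows immediately from the definition (if $us=bc$ then $s=(u^{-1}b)c$, so any non-trivial factorization of $us$ yields one of $s$ and vice versa). As an alternative to invoking Theorem~\ref{lem170501a} and Proposition~\ref{prop170501b}, the forward direction can be obtained in one line from Lemma~\ref{lem190115a} applied to the decomposition $\hat f=\hat\phi\circ\widehat{\id_{[P]}}\circ\hat\epsilon$ (recalling from Remark~\ref{disc170501a} that $\hat\phi$ and $\hat\epsilon$ are weak equivalences); I would mention this as a remark for clarity, but the proof above handles both directions uniformly.
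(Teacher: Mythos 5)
Your proof is correct and follows essentially the same route as the paper's: both reduce the statement to Theorem~\ref{lem170501a} and Proposition~\ref{prop170501b} via the identity $r=u\prod_p a_p$ from Proposition~\ref{prop170423a}, and the paper likewise notes Lemma~\ref{lem190115a} as an alternative for the direction from $\hat f$ to $\widehat{\id_{[P]}}$. Your explicit check that irreducibility is invariant under multiplication by an invertible element is a detail the paper leaves implicit.
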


\begin{proof}
Assume that the divisibility morphism
$\widehat{\id_{[P]}}$ is 
weakly
irreducible. Proposition~\ref{prop170501b} implies that $\prod_pa_p$ is irreducible in $A$.
By Remark~\ref{disc171015b}, let $r\in A$ be the unique element of $A$ such that $\prod_my_m=r\prod_nx_n$.
Since $r=u\prod_na_n$ for some invertible $a\in A$ by Proposition~\ref{prop170423a}, we conclude that $r$ is irreducible in $A$, so
Theorem~\ref{lem170501a} says that $\hat f$ is weakly
irreducible.

The converse is proved similarly, or by Lemma~\ref{lem190115a}.
\end{proof}

\begin{cor}\label{cor170501a}
A tuple $(x_n)\neq\emptytuple$ is 
weakly
irreducible in $\catf(A)$
if and only if
there is an integer $n_0\in[N]$ such that
$x_{n_0}$ is irreducible in $A$ and for all $n\neq n_0$ the element $x_n$ is invertible in $A$,
that is, if and only if $\prod_nx_n$ is irreducible in $A$.
\end{cor}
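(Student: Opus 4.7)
The plan is to reduce this directly to Theorem~\ref{lem170501a} applied to the canonical morphism $\hat f\colon(1)\to(x_n)$, then to verify an elementary fact about irreducibility of products in a cancellative commutative monoid.

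First, by Definition~\ref{defn170501e} the tuple $(x_n)\neq\emptytuple$ is weakly irreducible precisely when the morphism $\hat f\colon(1)\to(x_n)$ is weakly irreducible. For this $\hat f$, the unique element $r\in A$ of Remark~\ref{disc171015b} satisfying $r\cdot 1=\prod_n x_n$ is simply $r=\prod_n x_n$. Thus the equivalence $\eqref{lem170501a1}\iff\eqref{lem170501a0}$ of Theorem~\ref{lem170501a} yields at once that $(x_n)$ is weakly irreducible if and only if $\prod_n x_n$ is irreducible in $A$. This already establishes the last ``if and only if'' in the statement.

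It remains to prove the elementary equivalence: $\prod_n x_n$ is irreducible in $A$ if and only if there is an index $n_0\in[N]$ with $x_{n_0}$ irreducible and $x_n$ invertible for every $n\neq n_0$. For the backward implication, if the $x_n$ with $n\neq n_0$ are invertible then $u:=\prod_{n\neq n_0}x_n$ is invertible, so $\prod_n x_n=u\cdot x_{n_0}$ is the product of an invertible element with an irreducible element, hence is itself irreducible (directly from the definition, using cancellativity). For the forward implication, assume $\prod_n x_n$ is irreducible; in particular it is non-invertible, so not every $x_n$ can be invertible. Write $\prod_n x_n=x_{n_0}\cdot\prod_{n\neq n_0}x_n$ where $n_0$ is any index with $x_{n_0}$ non-invertible. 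Irreducibility of the product forces the second factor $\prod_{n\neq n_0}x_n$ to be invertible, whence each individual $x_n$ with $n\neq n_0$ is invertible. Finally, to see that $x_{n_0}$ is itself irreducible, use cancellativity: any factorization $x_{n_0}=bc$ induces a factorization $\prod_n x_n=b\cdot(c\cdot\prod_{n\neq n_0}x_n)$, and irreducibility of $\prod_n x_n$ forces one of the two factors to be invertible, which forces $b$ or $c$ to be invertible.

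The main (only) subtle point is the forward direction of this last elementary equivalence, namely pinning down the unique ``bad'' index $n_0$ and showing that $x_{n_0}$ inherits irreducibility from the product; everything else is bookkeeping around Theorem~\ref{lem170501a}.
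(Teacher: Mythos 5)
Your proof is correct and follows the paper's own route: both reduce the statement to weak irreducibility of the morphism $(1)\to(x_n)$ and invoke Theorem~\ref{lem170501a} with $r=\prod_nx_n$. The only difference is that you write out the elementary monoid argument identifying irreducibility of $\prod_nx_n$ with the existence of the index $n_0$, which the paper asserts without proof.
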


\begin{proof}
By definition, a tuple $(x_n)\neq\emptytuple$ is 
weakly
irreducible in $\catf(A)$
if and only if the morphism $\hat f\colon (1)\to (x_n)$ is 
weakly
irreducible.
Theorem~\ref{lem170501a} says that this holds if and only if $\prod_nx_n$ is irreducible in $A$, 
that is, if and only if there is an integer $n_0\in[N]$ such that
$x_{n_0}$ is irreducible in $A$ and for all $n\neq n_0$ the element $x_n$ is invertible in $A$.
\end{proof}

The next lemma is for use in the proof of Theorem~\ref{prop170503a}.

\begin{lem}\label{lem170503a}
Consider 
two
tuples $(x_n),(y_m)$ in $\catf(A)$,
and assume that in $\catf(A)$
there is a finite sequence
$(x_n)\to\cdots\to(y_m)$ of 
weakly
irreducible morphisms and weak equivalences.
If $\prod_nx_n$ is invertible or has an irreducible factorization in $A$,
then 
$\prod_my_m$ is invertible or has an irreducible factorization in $A$.
\end{lem}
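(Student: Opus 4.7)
The plan is to induct on the length $k$ of the chain
\[(x_n)=(w^{(0)})\to(w^{(1)})\to\cdots\to(w^{(k)})=(y_m)\]
of weakly irreducible morphisms and weak equivalences. The base case $k=0$ is immediate since then $(x_n)=(y_m)$. For the inductive step, I would apply the induction hypothesis to the initial sub-chain of length $k-1$ to conclude that $\prod_\ell w^{(k-1)}_\ell$ is invertible or has an irreducible factorization in $A$, and then verify that this property propagates across a single final morphism $\hat h\colon(w^{(k-1)})\to(y_m)$.

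For that single-morphism step, Remark~\ref{disc171015b} supplies a unique element $r\in A$ with $\prod_m y_m = r\cdot \prod_\ell w^{(k-1)}_\ell$. Two cases arise according to the nature of $\hat h$: if $\hat h$ is a weak equivalence, Lemma~\ref{lem190110a} forces $r$ to be invertible; if $\hat h$ is weakly irreducible, Theorem~\ref{lem170501a} forces $r$ to be irreducible. I would then argue in both cases that multiplication by $r$ preserves the desired alternative. If $\prod_\ell w^{(k-1)}_\ell$ is invertible, say equal to a unit $u$, then $\prod_m y_m=ru$ is either a unit (when $r$ is a unit) or a single irreducible (when $r$ is irreducible). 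If instead $\prod_\ell w^{(k-1)}_\ell = p_1\cdots p_t$ is an irreducible factorization, then $\prod_m y_m$ is either $(rp_1)p_2\cdots p_t$ or $rp_1\cdots p_t$, each of which is again a product of irreducibles.

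The only subtlety to flag is the classical fact that, in a cancellative commutative monoid, a unit multiple of an irreducible element is itself irreducible; this follows directly from Definition~\ref{defn170501e} (an invertible factor of a putative decomposition can always be absorbed to produce a trivial decomposition) and poses no real obstacle. Aside from that routine point, the entire argument is a straightforward bookkeeping exercise resting on Lemma~\ref{lem190110a} and Theorem~\ref{lem170501a}, so no significant difficulty is anticipated.
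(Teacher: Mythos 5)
Your proof is correct and follows essentially the same route as the paper: both arguments reduce each morphism in the chain to its associated element $r$, invoke Lemma~\ref{lem190110a} for weak equivalences and Theorem~\ref{lem170501a} for weakly irreducible morphisms to see that $r$ is a unit or an irreducible, and then absorb units into irreducibles. The paper simply collects all the factors $r_1,\dots,r_q$ at once via $\prod_m y_m=r_1\cdots r_q\prod_n x_n$ rather than inducting one morphism at a time, and leaves the final bookkeeping (which you spell out) implicit.
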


\begin{proof}
Let $q$ denote the number of morphisms in the given sequence
$(x_n)\to\cdots\to(y_m)$. 
By 
Lemma~\ref{lem190110a} and Theorem~\ref{lem170501a}
there is a sequence
$r_1,\ldots,r_q$ of invertible elements and irreducible elements such that $\prod_my_m=r_1\cdots r_q\prod_nx_n$.
As $\prod_nx_n$ is assumed to be 
invertible or to have an irreducible factorization in $A$, it follows from this formula that 
$\prod_my_m$ is invertible or has an irreducible factorization in $A$.
\end{proof}

\section{Weak
Primeness}
\label{sec170702a}

\begin{assumptions}
In this section, again  $A$ is a divisibility monoid,
and we say that a morphism $\hat f$ is a weak equivalence provided that $\hat f\in\catw(A)$.
\end{assumptions}

Here
we introduce and study a notion of 
weak primeness for objects and morphisms in $\catf(A)$, 
based on the idea of weak divisibility from Section~\ref{sec170708a}.
Our treatment here is roughly parallel to our discussion of 
weak
irreducibility.

\begin{defn}\label{defn170501ezzz}
A monoid element $a\in A$ is \emph{prime} if it is not invertible and for all $b,c\in A$ if $a\mid bc$ in $A$, then either $a\mid b$
or $a\mid c$ in $A$.
We shall say that a morphism $\hat{f}$ in $\mathcal{F}(A)$ is 
\textit{weakly
prime} when $\hat{f}$ is not a weak equivalence and for every 
weak divisibility relation 
$\hat{f} \divs \hat{g} \circ \hat{h}$ either $\hat{f} \divs \hat{g}$ or $\hat{f} \divs \hat{h}$.
We say an object $(x_n) \in \mathcal{F}(A)$ is 
\textit{weakly
prime} if  the morphism $(1) \to (x_n)$ is 
weakly prime.
\end{defn}

\begin{disc}
\label{disc170501czzz}
If $\hat f\colon(x_n)\to(y_m)$ is a morphism in $\catf(A)$ with $(x_n)=\emptytuple$ or $(y_m)=\emptytuple$,
then $\hat f$ is not 
weakly
prime; so $\emptytuple$ is not 
weakly prime
in $\catf(A)$.

Similar to the previous section, the term ``weakly prime morphism'' is meant to suggest the usual notion of primeness,
modified by weak equivalences and weak divisibility.
One could introduce variations on this,
for instance, using isomorphisms or one of the divisibility notions discussed in 
Example~\ref{ex170706azzz}.
However, these versions do not suit our purposes.
\end{disc}

The next result shows, e.g., that the morphism $\widehat{\id_{[1]}}\colon(2)\to(6)$ from
Example~\ref{ex170706azzz}.
is weakly prime.
And it gives yet another indication of how the morphisms in $\catw(A)$ can see factorization properties in $A$.

\begin{thm}\label{lem170501azzz} 
For an element $r \in A$, the following conditions are equivalent.
\begin{enumerate}[\rm(i)]
\item \label{lem170501azzz1}
$r$ is prime in $A$,
\item \label{lem170501azzz5}
each morphism $\hat f\colon (x_n)\to (y_m)$ where $r\cdot\prod_nx_n=\prod_my_m$ is weakly prime,
\item \label{lem170501azzz2} 
the 1-tuple $(r)$ is weakly prime in $\mathcal{F}(A)$, 
and
\item \label{lem170501azzz0}
some morphism $\hat f\colon (x_n)\to (y_m)$ with $r\cdot\prod_nx_n=\prod_my_m$ is weakly prime.
\end{enumerate}
\end{thm}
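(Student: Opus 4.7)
My plan is to reduce everything to divisibility statements about the ``factor'' elements in $A$ via Theorem~\ref{prop170708a}, exploiting the fact that the weak divisibility relation $\hat f \divs \hat g$ on morphisms is equivalent to divisibility between the associated factor elements. This makes the proof run more smoothly than the corresponding irreducibility result, because we never have to unpack the internal combinatorial structure of the morphisms with Proposition~\ref{prop170423a}; we can work purely with elements of $A$.

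First, the implications $\eqref{lem170501azzz5}\Rightarrow\eqref{lem170501azzz2}\Rightarrow\eqref{lem170501azzz0}$ are immediate: the natural morphism $(1)\to(r)$ satisfies $r\cdot 1=r$, so~\eqref{lem170501azzz5} makes it weakly prime, which is exactly~\eqref{lem170501azzz2}, and this is a specific instance of~\eqref{lem170501azzz0}. The main work is thus $\eqref{lem170501azzz1}\Rightarrow\eqref{lem170501azzz5}$ and $\eqref{lem170501azzz0}\Rightarrow\eqref{lem170501azzz1}$.

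For $\eqref{lem170501azzz1}\Rightarrow\eqref{lem170501azzz5}$, assume $r$ is prime and consider any morphism $\hat f\colon(x_n)\to(y_m)$ with $r\prod_n x_n=\prod_m y_m$. Since $r$ is non-invertible, Lemma~\ref{lem190110a} says $\hat f$ is not a weak equivalence. Suppose $\hat f\divs \hat g\circ\hat h$ for composable morphisms $\hat h\colon(u_\ell)\to(t_k)$ and $\hat g\colon(t_k)\to(z_p)$, with factor elements $a$ and $b$ respectively (so $\prod_k t_k=a\prod_\ell u_\ell$ and $\prod_p z_p=b\prod_k t_k$). Then the factor of $\hat g\circ\hat h$ is $ab$, and Theorem~\ref{prop170708a}\eqref{prop170708a5} translates the hypothesis $\hat f\divs \hat g\circ\hat h$ into $r\mid ab$. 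Primeness of $r$ yields $r\mid a$ or $r\mid b$, and reapplying Theorem~\ref{prop170708a}\eqref{prop170708a5} gives $\hat f\divs\hat h$ or $\hat f\divs\hat g$, as required.

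For $\eqref{lem170501azzz0}\Rightarrow\eqref{lem170501azzz1}$, let $\hat f\colon(x_n)\to(y_m)$ be a weakly prime morphism with $r\prod_n x_n=\prod_m y_m$. Since $\hat f$ is not a weak equivalence, Lemma~\ref{lem190110a} shows $r$ is non-invertible. To verify primeness, suppose $r\mid bc$ in $A$, and build the divisibility morphisms $\hat h\colon(1)\to(c)$ (factor $c$) and $\hat g\colon(c)\to(bc)$ (factor $b$); these are well defined because $1\mid c$ and $c\mid bc$. Their composition $\hat g\circ\hat h\colon(1)\to(bc)$ has factor $bc$, so the hypothesis $r\mid bc$ together with Theorem~\ref{prop170708a}\eqref{prop170708a5} gives $\hat f\divs \hat g\circ\hat h$. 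Weak primeness of $\hat f$ then produces $\hat f\divs \hat g$ or $\hat f\divs\hat h$, i.e., $r\mid b$ or $r\mid c$, so $r$ is prime.

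The only point requiring mild care is keeping the direction of weak divisibility straight (by Theorem~\ref{prop170708a}\eqref{prop170708a5}, $\hat f\divs\hat g$ means the factor of $\hat f$ divides the factor of $\hat g$, and not the reverse); beyond that, the proof is a direct translation between primeness in $A$ and weak primeness in $\catf(A)$ via the factor element. I do not anticipate any genuine obstacle: Theorem~\ref{prop170708a} does essentially all of the heavy lifting.
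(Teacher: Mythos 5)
Your proposal is correct and follows essentially the same route as the paper's proof: both reduce weak primeness to primeness of the factor element via Theorem~\ref{prop170708a}\eqref{prop170708a5} and Lemma~\ref{lem190110a}, testing primeness with a composition of divisibility morphisms between 1-tuples. Your write-up of the implication \eqref{lem170501azzz1}$\implies$\eqref{lem170501azzz5} is in fact slightly more explicit than the paper's (which just says to reverse the earlier steps), since you treat an arbitrary composition $\hat g\circ\hat h$ directly.
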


\begin{proof}
The implications $\eqref{lem170501azzz5}\implies\eqref{lem170501azzz2}\implies\eqref{lem170501azzz0}$
are trivial.

$\eqref{lem170501azzz0}\implies\eqref{lem170501azzz1}$
Assume that
$\hat f\colon (x_n)\to (y_m)$ is a weakly prime morphism with $r\cdot\prod_nx_n=\prod_my_m$.
In particular, $r$ is not invertible in $A$, otherwise 
$\hat f$
would be a weak equivalence by Lemma~\ref{lem190110a}.
Let $a,b\in A$ be such that $r\mid ab$.
Consider the
following morphisms between 1-tuples.
$$(1)\xra[=\hat h]{\widehat{\id_{[1]}}}(a)\xra[=\hat g]{\widehat{\id_{[1]}}}(ab)$$
Because of Theorem~\ref{prop170708a}, the assumption $r\mid ab$
implies that $\hat f\divs\hat h\circ\hat g$.
Since $\hat f$ is 
weakly
prime, we have $\hat f\divs\hat h$ or $\hat f\divs\hat g$.
In the case $\hat f\divs\hat h$, another application of Theorem~\ref{prop170708a} implies that $r\mid a$.
In the case $\hat f\divs\hat g$, we conclude similarly that $r\mid b$.
Thus, the element $r$ is prime in $A$, as desired.

$\eqref{lem170501azzz1}\implies\eqref{lem170501azzz5}$
Assume that $r$ is prime in $A$. In particular, the element $r$ is not invertible in $A$.
Let  $\hat f\colon (x_n)\to (y_m)$ be a morphism such that $r\cdot\prod_nx_n=\prod_my_m$.
Lemma~\ref{lem190110a} implies that $\hat f$ is not a weak equivalence.
If $\hat f\divs\hat \alpha\circ\hat \beta$ for some morphisms $\hat\alpha$ and $\hat\beta$, 
then reverse the steps of the previous paragraph to conclude that $\hat f\divs\hat \alpha$ or $\hat f\divs\hat \beta$,
so that $\hat f$ is weakly prime, as desired.
\end{proof}

The next result is included for use in Theorem~\ref{prop170501azzz}.

\begin{prop}\label{prop170501bzzz}
With $I\in\bbn$, consider a divisibility morphism
$\widehat{\id_{[I]}}\colon(v_i)\to(a_iv_i)$ in $\catf(A)$. 
Such a morphism is 
weakly
prime if and only if there is an integer $i_0\in[I]$ such that
$a_{i_0}$ is prime in $A$ and for all $i\neq i_0$ the element $a_i$ is invertible in $A$, that is, if and only if
the element $\ti a=\prod_ia_i$ is prime in $A$.
\end{prop}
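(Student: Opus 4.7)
The plan is to apply Theorem~\ref{lem170501azzz} to the given divisibility morphism and then reduce to a standard monoid-theoretic fact about factorizations of prime elements.

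First, I observe that the divisibility morphism $\widehat{\id_{[I]}}\colon(v_i)\to(a_iv_i)$ fits the hypothesis of Theorem~\ref{lem170501azzz} with $r=\ti a=\prod_i a_i$. Indeed, with $(x_n)=(v_i)$ and $(y_m)=(a_iv_i)$, we have
\[
\textstyle\prod_m y_m = \prod_i(a_iv_i) = \left(\prod_i a_i\right)\prod_i v_i = \ti a\cdot\prod_n x_n.
\]
So Theorem~\ref{lem170501azzz} immediately gives: $\widehat{\id_{[I]}}$ is weakly prime if and only if $\ti a$ is prime in $A$. This takes care of the last stated equivalence.

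It remains to show that $\ti a=\prod_i a_i$ is prime in $A$ if and only if there is some $i_0\in[I]$ with $a_{i_0}$ prime and all other $a_i$ invertible. For the (easier) backward direction, assume $a_{i_0}$ is prime and $a_i$ is invertible for $i\neq i_0$. Then $u:=\prod_{i\neq i_0}a_i$ is invertible, and $\ti a=ua_{i_0}$; a direct check using the definition of prime (and cancellativity) shows that the product of a prime and a unit is prime. For the forward direction, assume $\ti a$ is prime. Write $\ti a = a_1\cdot(a_2\cdots a_I)$. Since $\ti a\mid\ti a$, primeness of $\ti a$ and an easy induction on $I$ give some $i_0\in[I]$ with $\ti a\mid a_{i_0}$. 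Write $a_{i_0}=\ti a\cdot c$. Then
\[
\textstyle\ti a = a_{i_0}\cdot\prod_{i\neq i_0}a_i = \ti a\cdot c\cdot\prod_{i\neq i_0}a_i,
\]
and cancellativity of $A$ forces $c\cdot\prod_{i\neq i_0}a_i=1$. Hence $c$ and every $a_i$ for $i\neq i_0$ are invertible, and $a_{i_0}=\ti a\cdot c$ is $\ti a$ times a unit, hence prime.

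No step is really an obstacle here: the only mild subtlety is the forward direction of the second equivalence, where one needs to remember that cancellativity of $A$ turns the equation $\ti a=\ti a\cdot c\cdot\prod_{i\neq i_0}a_i$ into a genuine unit factorization. Everything else is a direct translation through Theorem~\ref{lem170501azzz}, and the structure parallels the proof of Proposition~\ref{prop170501b} in the irreducibility case.
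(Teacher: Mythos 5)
Your proposal is correct and follows essentially the same route as the paper: the paper's proof simply says to argue as for Proposition~\ref{prop170501b} using Theorem~\ref{lem170501azzz}, i.e., apply that theorem with $r=\prod_ia_i$ and then identify primeness of the product with primeness of one factor modulo units. You have merely written out the monoid-theoretic equivalence (prime product $\iff$ one prime factor and the rest units) that the paper leaves implicit, and your verification of it, including the use of cancellativity, is sound.
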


\begin{proof}
Argue as for Proposition~\ref{prop170501b}, using Theorem~\ref{lem170501azzz}.
\end{proof}

Next,
we give a characterization of 
weak primeness
akin to Proposition~\ref{prop170503c} and Theorem~\ref{prop170501a}.

\begin{thm}\label{prop170501azzz}
Let $\hat f\colon(x_n)\to (y_m)$ be a morphism in $\catf(A)$ between non-empty tuples.
Consider the decomposition $\hat f=\hat\phi\circ\widehat{\id_{[P]}}\circ\hat\epsilon$ from Proposition~\ref{prop170423a}
$$(x_n)\xra{\hat\epsilon}(z_p)\xra{\widehat{\id_{[P]}}}(a_pz_p)
\xra{\hat\phi}(y_m).$$
Then 
the morphism $\hat f$ is 
weakly
prime
if and only if the divisibility morphism
$\widehat{\id_{[P]}}$ is 
weakly
prime (see Proposition~\ref{prop170501bzzz}).
\end{thm}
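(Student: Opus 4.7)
The plan is to reduce this biconditional to Theorem~\ref{lem170501azzz} and Proposition~\ref{prop170501bzzz}, which characterize weak primeness of a morphism in terms of primeness of the element $r$ from Remark~\ref{disc171015b} associated to the morphism. The key input is Proposition~\ref{prop170423a}, which tells us exactly how the $r$-element for $\hat f$ compares to the $r$-element for the divisibility morphism $\widehat{\id_{[P]}}$.

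First I would compute the two elements of $A$ to which the theorem reduces. By Remark~\ref{disc171015b} applied to $\hat f$, there is a unique $r\in A$ such that $\prod_m y_m = r\prod_n x_n$. By Proposition~\ref{prop170423a}, this $r$ has the form $r = u\prod_p a_p$ for some invertible $u\in A$. On the other hand, for the divisibility morphism $\widehat{\id_{[P]}}\colon (z_p)\to(a_p z_p)$, the corresponding element is $\tilde r = \prod_p a_p$, since $\prod_p(a_p z_p)= \bigl(\prod_p a_p\bigr)\prod_p z_p$.

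Next I would invoke the standard fact that primeness in a cancellative monoid is preserved under multiplication by a unit: since $u$ is invertible in $A$, the element $u\prod_p a_p$ is prime in $A$ if and only if $\prod_p a_p$ is prime in $A$. This is immediate from the definition, since the divisibility relations $u\tilde r\mid bc$ and $\tilde r\mid bc$ are equivalent when $u$ is a unit, and similarly for $u\tilde r\mid b$ versus $\tilde r\mid b$.

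Finally, I would assemble the equivalence. By Theorem~\ref{lem170501azzz} (applied to $\hat f$, which satisfies $r\cdot\prod_n x_n = \prod_m y_m$), the morphism $\hat f$ is weakly prime if and only if $r = u\prod_p a_p$ is prime in $A$. By Proposition~\ref{prop170501bzzz}, $\widehat{\id_{[P]}}$ is weakly prime if and only if $\prod_p a_p$ is prime in $A$. The unit-invariance of primeness from the previous step identifies these two conditions, completing the proof. There is no real obstacle here; the theorem is a direct parallel of Theorem~\ref{prop170501a}, and once the bookkeeping of the $r$-elements is unwound via Proposition~\ref{prop170423a}, the result follows from the characterizations already in hand.
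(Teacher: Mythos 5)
Your proposal is correct and follows essentially the same route as the paper, which simply says to argue as for Theorem~\ref{prop170501a} using Theorem~\ref{lem170501azzz} and Proposition~\ref{prop170501bzzz}; your unwinding of the $r$-elements via Proposition~\ref{prop170423a} and the unit-invariance of primeness is exactly the intended argument.
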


\begin{proof}
Argue as for Theorem~\ref{prop170501a}, using Theorem~\ref{lem170501azzz} and Proposition~\ref{prop170501bzzz}.
\end{proof}

\begin{cor}\label{cor170501azzz}
A tuple $(x_n)\neq\emptytuple$ is 
weakly
prime in $\catf(A)$
if and only if
there is an integer $n_0\in[N]$ such that
$x_{n_0}$ is prime in $A$ and for all $n\neq n_0$ the element $x_n$ is invertible in $A$,
that is, if and only if $\prod_nx_n$ is prime in $A$.
\end{cor}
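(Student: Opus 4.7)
The plan is to mimic the proof of Corollary~\ref{cor170501a} verbatim, simply substituting the prime/weakly prime analogs. Specifically, I will break the ``if and only if'' chain into two biconditionals: first, the equivalence between ``$(x_n)$ is weakly prime'' and ``$\prod_n x_n$ is prime in $A$'', and second, the purely monoid-theoretic equivalence between ``$\prod_n x_n$ is prime'' and the condition that exactly one entry $x_{n_0}$ is prime while the others are invertible.

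For the first biconditional, I would apply Definition~\ref{defn170501ezzz} directly: by construction, $(x_n)\neq\emptytuple$ is weakly prime if and only if the morphism $\hat f\colon (1)\to (x_n)$ is weakly prime. Since this morphism satisfies $r\cdot 1=\prod_n x_n$ with $r=\prod_n x_n$, Theorem~\ref{lem170501azzz} (specifically the equivalence $\eqref{lem170501azzz1}\Leftrightarrow\eqref{lem170501azzz0}$ applied with the 1-tuple $(1)$) immediately yields that $\hat f$ is weakly prime if and only if $r=\prod_n x_n$ is prime in $A$.

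For the second biconditional, the reverse direction is trivial: if $x_{n_0}$ is prime and all other $x_n$ are invertible, then $\prod_n x_n=u\cdot x_{n_0}$ with $u$ invertible, and any associate of a prime is prime (a routine consequence of the definition of prime in a cancellative commutative monoid). For the forward direction, I would use the standard fact that in a cancellative commutative monoid a prime element is irreducible: if $p$ is prime and $p=ab$, then $p\mid ab$ gives (say) $p\mid a$, hence $a=pc$ and then cancellation forces $b$ invertible. Applying this to $\prod_n x_n$ and inducting on $N$ (or applying irreducibility to the factorization $\prod_n x_n = x_1\cdot\prod_{n\geq 2}x_n$ and iterating), exactly one factor is non-invertible; this factor is then an associate of the prime $\prod_n x_n$ and hence is prime.

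The main obstacle is no real obstacle at all: the entire substance is encapsulated in Theorem~\ref{lem170501azzz}, and the only non-categorical ingredient is the elementary assertion that a product is prime in a cancellative commutative monoid if and only if exactly one factor is prime and the rest are units. Since Theorem~\ref{lem170501azzz} already handles the transition between monoid-primeness and weak-primeness of morphisms, and since Definition~\ref{defn170501ezzz} reduces primeness of a tuple to primeness of the morphism $(1)\to(x_n)$, the proof reduces to a one-line invocation followed by the standard monoid fact.
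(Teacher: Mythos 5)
Your proposal is correct and follows essentially the same route as the paper, whose proof of this corollary is simply ``argue as for Corollary~\ref{cor170501a}, using Theorem~\ref{lem170501azzz}'': reduce via Definition~\ref{defn170501ezzz} to the morphism $(1)\to(x_n)$, invoke Theorem~\ref{lem170501azzz} to identify weak primeness of that morphism with primeness of $\prod_nx_n$, and finish with the elementary monoid fact about when a product is prime. Your extra detail on that last monoid-theoretic step (prime implies irreducible, hence exactly one non-invertible factor, which is then an associate of the prime product) is a correct filling-in of what the paper leaves implicit.
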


\begin{proof}
Argue as for Corollary~\ref{cor170501a}, using
Theorem~\ref{lem170501azzz}.
\end{proof}

\begin{cor}\label{cor170710a}
If a morphism $\hat f$ is 
weakly
prime in $\catf(A)$, then it is 
weakly
irreducible in $\catf(A)$.
If a tuple $(x_n)$ is 
weakly
prime in $\catf(A)$, then it is 
weakly
irreducible in $\catf(A)$.
\end{cor}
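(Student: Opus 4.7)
The plan is to reduce the statement about morphisms to the classical fact that, in a cancellative commutative monoid, every prime element is irreducible, and then transfer the conclusion back via the characterizations already proved in Theorems~\ref{lem170501azzz} and~\ref{lem170501a}. The key observation is that both ``weakly prime'' and ``weakly irreducible'' have been translated into numerical conditions on the unique cofactor $r \in A$ satisfying $r\prod_n x_n = \prod_m y_m$ (from Remark~\ref{disc171015b}), so the whole corollary should collapse to a one-line argument in $A$.

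First I would handle the morphism case. Let $\hat f\colon (x_n)\to (y_m)$ be weakly prime. By Remark~\ref{disc170501czzz}, neither tuple is empty, so Remark~\ref{disc171015b} yields a unique $r\in A$ with $r\prod_n x_n = \prod_m y_m$. By Theorem~\ref{lem170501azzz}, the element $r$ is prime in $A$. I would then prove the elementary claim that a prime element of a cancellative commutative monoid is irreducible: if $r = bc$ then $r \mid bc$, so weak primeness of $r$ gives $r \mid b$ (after possibly swapping), i.e.\ $b = rd$; substituting yields $r = rdc$, and cancellativity forces $dc = 1$, making $c$ invertible. Hence $r$ is irreducible. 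Applying Theorem~\ref{lem170501a} in the direction \eqref{lem170501a1}$\Rightarrow$\eqref{lem170501a5} to the same equation $r\prod_n x_n = \prod_m y_m$ shows that $\hat f$ is weakly irreducible.

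The tuple statement is then immediate from the morphism statement. By definition, a tuple $(x_n)$ is weakly prime (resp.\ weakly irreducible) exactly when the morphism $(1)\to(x_n)$ is weakly prime (resp.\ weakly irreducible); so the conclusion for tuples follows from what was just proved, applied to this particular morphism.

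There is no real obstacle: the result is essentially a corollary of the dictionary established in the previous two sections. The only small point to verify carefully is that weak primeness of $\hat f$ excludes the degenerate case where $(x_n)$ or $(y_m)$ equals $\emptytuple$, which is guaranteed by Remark~\ref{disc170501czzz}; this is what allows the invocation of Remark~\ref{disc171015b} and hence of Theorem~\ref{lem170501azzz}.
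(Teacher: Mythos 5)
Your proposal is correct and follows essentially the same route as the paper: both reduce to the fact that prime elements of a cancellative commutative monoid are irreducible and then transfer via Theorems~\ref{lem170501azzz} and~\ref{lem170501a} (the paper handles the tuple case through Corollaries~\ref{cor170501a} and~\ref{cor170501azzz}, which amounts to the same reduction to the morphism $(1)\to(x_n)$ that you use). The only difference is that you spell out the prime-implies-irreducible argument, which the paper takes as known.
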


\begin{proof}
Since prime elements of $A$ are irreducible, 
the first implication follows from Theorems~\ref{lem170501a} and~\ref{lem170501azzz},
and the second implication is by Corollaries~\ref{cor170501a} and~\ref{cor170501azzz}.
\end{proof}

\section{Factorization in Integral Domains}\label{accp-implies-atomic}

\begin{assumptions}
In this section, Let $D$ be an integral domain.
We say that a morphism $\hat f$ in $\catf(D\setminus\{0\})$ is a weak equivalence provided that $\hat f\in\catw(D\setminus\{0\})$.
\end{assumptions}

Next,
we interpret some properties of integral domains  in terms of the category of factorization.
This entire section fits with the theme that factorization properties of our monoid are mirrored in the morphisms of its
category of factorization.
We begin
with the atomic property.
Recall that our integral domain $D$ is \emph{atomic} if every non-zero non-unit in $D$ factors as a finite product of 
irreducible elements of $D$, also known as ``atoms''.

\begin{thm}\label{prop170503a}
The following conditions are equivalent:
\begin{enumerate}[\rm(i)]
\item\label{prop170503a1}
the 
integral
domain $D$ is atomic,
\item\label{prop170503a2'}
every morphism $\hat f\colon (x_n)\to(y_m)$ in $\catf(D \setminus \{0\})$ between non-empty tuples decomposes as a finite composition
$(x_n)\to\cdots\to(y_m)$ of 
weakly
irreducible morphisms and weak equivalences,
\item\label{prop170503a2}
every tuple $(y_m)
\neq\emptytuple$ in $\catf(D \setminus \{0\})$ admits a finite chain
$(1)\to\cdots\to(y_m)$ of 
weakly
irreducible morphisms and weak equivalences, and
\item\label{prop170503a3}
every 1-tuple $(y)$ admits a finite chain
$(1)\to\cdots\to(y)$ of 
weakly
irreducible morphisms and weak equivalences, 
and
\item\label{prop170503a3'}
every 1-tuple $(y)$ admits a finite chain
$(1)\to\cdots\to(y)$ of 
weakly
irreducible morphisms and weak equivalences consisting entirely of 1-tuples.
\end{enumerate}
\end{thm}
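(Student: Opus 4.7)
The plan is to establish the cycle of implications (i) $\Rightarrow$ (ii) $\Rightarrow$ (iii) $\Rightarrow$ (iv) $\Rightarrow$ (v) $\Rightarrow$ (i). Three of these are essentially formal: (ii) $\Rightarrow$ (iii) follows by applying (ii) to the morphism $(1)\to(y_m)$ (which exists because $1$ divides any product); (iii) $\Rightarrow$ (iv) is the specialization to 1-tuples; and (v) $\Rightarrow$ (i) is a direct application of Lemma~\ref{lem170503a}, because for any non-zero non-unit $y\in D$ any chain $(1)\to\cdots\to(y)$ of weakly irreducible morphisms and weak equivalences yields a factorization of $y$ as a product of units and irreducibles, and $y$ being a non-unit forces at least one irreducible factor to appear.

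For (i) $\Rightarrow$ (ii), I would start by invoking Proposition~\ref{prop170423a} to decompose any morphism $\hat f\colon (x_n)\to(y_m)$ between non-empty tuples as $\hat f=\hat\phi\circ\widehat{\id_{[P]}}\circ\hat\epsilon$, where $\hat\epsilon$ and $\hat\phi$ are factorization morphisms and hence weak equivalences by Remark~\ref{disc170501a}. It therefore suffices to decompose the divisibility morphism $\widehat{\id_{[P]}}\colon(z_p)\to(a_p z_p)$. Since $D$ is atomic, each $a_p$ is either a unit or a product of atoms times a unit; for each coordinate $p$, I would build $a_p z_p$ from $z_p$ one factor at a time, producing a chain of divisibility morphisms each of which multiplies a single coordinate by either an atom (weakly irreducible by Proposition~\ref{prop170501b}) or a unit (a weak equivalence by Lemma~\ref{lem190110a}). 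Concatenating these chains over all $p$ and composing with $\hat\epsilon$ and $\hat\phi$ then gives the required decomposition of $\hat f$.

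The step (iv) $\Rightarrow$ (v) is proved by collapsing intermediate tuples. Given a chain $(1)=(t_0)\to(t_1)\to\cdots\to(t_k)=(y)$ supplied by (iv), I would apply the composite functor $\mathfrak B_{D\setminus\{0\}}\circ\mathfrak A_{D\setminus\{0\}}$ of Section~\ref{sec170415a} to obtain a chain of 1-tuples $(1)\to(\prod_j(t_1)_j)\to\cdots\to(y)$. The ratio $r_i\in D\setminus\{0\}$ of Remark~\ref{disc171015b} attached to each morphism is unchanged by this collapse, so by Lemma~\ref{lem190110a} and Theorem~\ref{lem170501a} weak equivalences remain weak equivalences and weakly irreducible morphisms remain weakly irreducible, giving a chain of 1-tuples of the required type.

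The step I expect to be the main obstacle is (i) $\Rightarrow$ (ii): the bookkeeping required to refine the divisibility morphism $\widehat{\id_{[P]}}\colon(z_p)\to(a_pz_p)$ into a chain of one-atom-at-a-time refinements is the most delicate piece, and one must verify at each intermediate stage that the relevant morphism is a genuine divisibility morphism in $\catf(D\setminus\{0\})$ whose weakly irreducible status is controlled by Proposition~\ref{prop170501b}. Everything else reduces either to a direct application of the previously established structure theory of $\catw(D\setminus\{0\})$ or to the characterization of weakly irreducible morphisms in terms of the ratio $r\in A$.
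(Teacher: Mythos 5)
Your proposal is correct, and the heart of it --- the implication (i)$\implies$(ii) via Proposition~\ref{prop170423a} followed by an atom-at-a-time refinement of the divisibility morphism $\widehat{\id_{[P]}}\colon(z_p)\to(a_pz_p)$ --- is exactly the paper's argument (the paper packages the refinement as a Claim proved by induction on the total number $\sum_pK_p$ of irreducible factors of the $a_p$, which is the same bookkeeping you describe). The implications (ii)$\implies$(iii)$\implies$(iv) and the use of Lemma~\ref{lem170503a} to return to atomicity also match. Where you genuinely diverge is in how condition (v) enters the cycle: the paper proves (i)$\implies$(v) directly, by observing that $(1)\to(y)$ is a divisibility morphism between $1$-tuples and invoking the Claim with $P=1$, and then closes with the trivial (v)$\implies$(iv). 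You instead prove (iv)$\implies$(v) by collapsing an arbitrary chain to a chain of $1$-tuples via $\mathfrak B_{D\setminus\{0\}}\circ\mathfrak A_{D\setminus\{0\}}$, noting that the ratio $r$ of Remark~\ref{disc171015b} is preserved so that Lemma~\ref{lem190110a} and Theorem~\ref{lem170501a} keep weak equivalences and weakly irreducible morphisms in their respective classes. This is a valid alternative: it buys you a single five-step cycle and avoids re-invoking atomicity for (v), at the modest cost of one extra functorial argument; the paper's route is shorter because the Claim already produces chains of $P$-tuples, so (v) falls out for free once (i) is in hand.
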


\begin{proof}
$\eqref{prop170503a1}\implies\eqref{prop170503a2'}$
Assume that $D$ is atomic.

Claim: each divisibility morphism $\widehat{\id_{[P]}}\colon(z_p)\to(a_pz_p)$ between $P$-tuples
decomposes as a finite composition
$(z_p)\to\cdots\to(a_pz_p)$ of 
weakly
irreducible divisibility morphisms and weak equivalences, consisting entirely of $P$-tuples.
Indeed, since $D$ is atomic, each $a_p$ factors as $a_p=u_pq_{p,1}\cdots q_{p,K_p}$ 
with $K_p\geq 0$, where $u_p$ is a unit and
each $q_{p,k}$ is irreducible.
We induct on $n=\sum_{p=1}^{p_1}K_p$.
In the base case $n=0$, each $a_p$ is a unit, so $\widehat{\id_{[P]}}$ is a weak equivalence by Proposition~\ref{prop170503c}.
For the induction step, assume that $n\geq 1$, so we have $K_{p_1}\geq 1$ for some $p_1\in [P]$.
Then the morphism $\widehat{\id_{[P]}}$ factors as follows
\begin{align*}
(z_p)
&\xra[\text{w.i.}]{\widehat{\id_{[P]}}}(q_{1,1}z_1,z_2,\ldots,z_P)
\xra[(\dagger)]{\widehat{\id_{[P]}}}(a_pz_p).
\end{align*}
The  morphism labeled ``w.i.'' is
weakly
irreducible by Theorem~\ref{lem170501a}.
The morphism labeled $(\dagger)$ satisfies our induction hypothesis, so it
decomposes as a finite composition
of 
weakly
irreducible morphisms and weak equivalences, consisting entirely of $P$-tuples.
Thus, the same is true of the original morphism $\widehat{\id_{[P]}}$.

Now,
consider a morphism $\hat f\colon (x_n)\to(y_m)$ in $\catf(D \setminus \{0\})$ between non-empty tuples.
Consider the decomposition $\hat f=\hat\phi\circ\widehat{\id_{[P]}}\circ\hat\epsilon$ from Proposition~\ref{prop170423a}
$$(x_n)\xra{\hat\epsilon}(z_p)\xra{\widehat{\id_{[P]}}}(a_pz_p)
\xra{\hat\phi}(y_m).$$
The Claim implies that the divisibility morphism $\widehat{\id_{[P]}}$
decomposes as a finite composition
$(z_p)\to\cdots\to(a_pz_p)$ of 
weakly
irreducible divisibility morphisms and weak equivalences.
Since $\hat\epsilon$ and $\hat\phi$ are weak equivalences by Remark~\ref{disc170501a}, 
the desired conclusion for $\hat f$ follows from the displayed factorization.

$\eqref{prop170503a2'}\implies\eqref{prop170503a2}$
Condition~\eqref{prop170503a2} is the special case $(x_n)=(1)$ of condition~\eqref{prop170503a2'}, so this implication is trivial.

$\eqref{prop170503a2}\implies\eqref{prop170503a3}$
Condition~\eqref{prop170503a3} is the special case $N=1$ of condition~\eqref{prop170503a2}, so this implication is trivial.

$\eqref{prop170503a3}\implies\eqref{prop170503a1}$
Assume that every 1-tuple 
$(y)$
admits a finite chain
$(1)\to\cdots\to(y)$ of 
weakly
irreducible morphisms and weak equivalences.
Let $x\in D$ be a non-zero non-unit.
By assumption, there is a finite chain
$(1)\to\cdots\to(x)$ of 
weakly
irreducible morphisms and weak equivalences.
Since $1\in A$ is invertible, Lemma~\ref{lem170503a} 
implies that $x$ is  invertible or has an irreducible factorization as well.
Thus, $D$ is atomic, as desired.

$\eqref{prop170503a1}\implies\eqref{prop170503a3'}$
The natural morphism $(1)\to(y)$ is a divisibility morphism between 1-tuples, so the Claim in the proof of the preceding implication
gives the desired conclusion.

$\eqref{prop170503a3'}\implies\eqref{prop170503a3}$
This implication is trivial.
\end{proof}

Next, we have a similar characterization of 
UFD's (unique factorization domains).
Compare 
conditions~\eqref{prop170710a5}--\eqref{prop170710a4}
to~\cite[Theorem~5.1]{coykendall:idg}.
It is worth noting that 
conditions~\eqref{prop170710a5} and~\eqref{prop170710a4}
do not include commutativity requirements on the diagrams they contain.

\begin{thm}\label{prop170710a}
The following conditions are equivalent.
\begin{enumerate}[\rm(i)]
\item\label{prop170710a1}
the 
integral
domain $D$ is a UFD.
\item\label{prop170710a2}
\emph{(a)} Every
tuple $(x_n)\neq\emptytuple$ in $\catf(D \setminus \{0\})$ admits a finite chain
$(1)\to\cdots\to(x_n)$ of 
weakly
irreducible morphisms and weak equivalences, and
\emph{(b)}
every 
weakly
irreducible tuple in $\catf(D \setminus \{0\})$ is 
weakly
prime.
\item\label{prop170710a3}
\emph{(a)} Every
morphism $\hat f\colon (x_n)\to(y_m)$ in $\catf(D \setminus \{0\})$ between non-empty tuples decomposes as a finite composition
$(x_n)\to\cdots\to(y_m)$ of 
weakly
irreducible morphisms and weak equivalences, and
\emph{(b)}
every 
weakly
irreducible morphism  in $\catf(D \setminus \{0\})$ is 
weakly
prime.
\item\label{prop170710a5}
\emph{(a)} Every morphism $\hat f\colon (x_n)\to(y_m)$ in $\catf(D \setminus \{0\})$ between non-empty tuples decomposes 
as a finite composition
$(x_n)\to\cdots\to(y_m)$ of 
weakly
irreducible morphisms and weak equivalences; and
\emph{(b)} for every pair of morphisms $(v_i)\xra{\hat f}(z_p)\xla{\hat g}(w_j)$ in $\catf(D\setminus\{0\})$ with $(v_i)$ and $(w_j)$ 
weakly
irreducible,
either there is a weak equivalence $(v_i)\to(w_j)$ or 
there  are morphisms
\begin{equation}
\label{eq170719a}
\begin{split}
\xymatrix{
&(z_p)
\\
(v_i)\ar[r]\ar[ru]^{\hat f}
&(t)\ar[u]
&(w_j)\ar[l]\ar[lu]_{\hat g}
}
\end{split}
\end{equation}
where each morphism in the bottom row of the diagram is 
weakly
irreducible and $(t)$ is a 1-tuple
such that $t$ is associate to $\prod_iv_i\cdot\prod_jw_j$ in $D$.
\item\label{prop170710a4}
\emph{(a)} Every tuple $(x_n)\neq\emptytuple$ in $\catf(D \setminus \{0\})$ admits a finite chain
$(1)\to\cdots\to(x_n)$ of 
weakly
irreducible morphisms and weak equivalences; and
\emph{(b)} for every pair of morphisms $(v)\xra{\hat f}(z)\xla{\hat g}(w)$ between 1-tuples in $\catf(D\setminus\{0\})$ with $(v)$ and $(w)$ 
weakly
irreducible,
either there is a weak equivalence $(v)\to(w)$ or 
there is a positive integer $K$ such that there are morphisms
$$\xymatrix@C=6.3mm@R=2.5mm{
&&&(z)
\\ 
(v)&&&&&&(w)
\\
(t_{0,\ell_0})\ar[r]\ar[rrruu]^{\hat f}
\ar@{=}[u]
&(t_{1,\ell_1})\ar[rruu]
&(t_{2,\ell_2})\ar[r]\ar[l]\ar[ruu]
&(t_{3,\ell_3})\ar[uu]
&\cdots\ar[r]\ar[l]
&(t_{2K-1,\ell_{2K-1}})\ar[lluu]
&(t_{2K,\ell_{2K}})\ar[l]\ar[llluu]_{\hat g}\ar@{=}[u]
}$$
where each morphism in the bottom row is 
weakly
irreducible, and
for each $k\in [K]$ 
the elements $\prod_{\ell_{2k-2}}t_{2k-2,\ell_{2k-2}}\cdot\prod_{\ell_{2k}}t_{2k,\ell_{2k}}$ and
$\prod_{\ell_{2k-1}}t_{2k-1,\ell_{2k-1}}$ are associates.
\end{enumerate}
\end{thm}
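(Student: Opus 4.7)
The plan is to establish the equivalences $\eqref{prop170710a1}\iff\eqref{prop170710a2}\iff\eqref{prop170710a3}$ first via a direct cycle, and then separately show $\eqref{prop170710a3}\iff\eqref{prop170710a5}$ and $\eqref{prop170710a2}\iff\eqref{prop170710a4}$. The guiding principle is the classical fact that an integral domain is a UFD if and only if it is atomic and every irreducible element is prime; our task is to translate both halves of this characterization into the categorical language using the dictionary provided by Theorems~\ref{lem170501a} and~\ref{lem170501azzz} and Corollaries~\ref{cor170501a} and~\ref{cor170501azzz}.

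For $\eqref{prop170710a1}\Rightarrow\eqref{prop170710a3}$, part~(a) is exactly the implication $\eqref{prop170503a1}\Rightarrow\eqref{prop170503a2'}$ of Theorem~\ref{prop170503a}. For part~(b), let $\hat f\colon(x_n)\to(y_m)$ be a weakly irreducible morphism, let $r\in D\setminus\{0\}$ be the unique element with $r\prod_nx_n=\prod_my_m$, and use Theorem~\ref{lem170501a} to deduce that $r$ is irreducible; the UFD hypothesis then makes $r$ prime, and Theorem~\ref{lem170501azzz} translates this back to $\hat f$ being weakly prime. The implication $\eqref{prop170710a3}\Rightarrow\eqref{prop170710a2}$ is the obvious specialization to morphisms of the form $(1)\to(x_n)$, using the definitions of weakly irreducible and weakly prime objects. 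For $\eqref{prop170710a2}\Rightarrow\eqref{prop170710a1}$, part~(a) combined with Theorem~\ref{prop170503a} gives atomicity of $D$, while part~(b) together with Corollaries~\ref{cor170501a} and~\ref{cor170501azzz} upgrades every irreducible element of $D$ to a prime element (applied to 1-tuples $(p)$ with $p$ irreducible); these two properties together characterize UFDs.

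For the diagrammatic reformulations $\eqref{prop170710a3}\iff\eqref{prop170710a5}$ and $\eqref{prop170710a2}\iff\eqref{prop170710a4}$, note that conditions~(a) coincide on each side, so the work reduces to re-expressing condition~(b). Here the key observation, obtained by applying $\mathfrak A_A$ and using Theorem~\ref{prop170708a} to translate all morphisms into divisibility relations in $D$, is that the existence of a diagram of the shape~\eqref{eq170719a} with the required properties is equivalent in atomic domains to the following purely arithmetic statement: whenever two irreducibles $p,q\in D$ both divide an element $c\in D$, either $p$ and $q$ are associate (giving the weak equivalence $(v_i)\to(w_j)$ via the characterization of weak equivalences in Lemma~\ref{lem190110a}) or $pq\mid c$ (witnessed by the 1-tuple $(t)$ with $t$ associate to $\prod_iv_i\cdot\prod_jw_j$ and the vertical morphism $(t)\to(z_p)$). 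A short argument using atomicity shows that this statement is equivalent to the condition that every irreducible element of $D$ is prime: the forward direction is immediate from the definition of prime, while the reverse direction proceeds by factoring $ab$ into irreducibles and iterating the dichotomy. The zig-zag diagram in~\eqref{prop170710a4}(b) encodes the same divisibility relation $\prod_iv_i\cdot\prod_jw_j\mid\prod_pz_p$ through a chain of 1-tuples whose products factor compatibly, and by Theorem~\ref{prop170708a} each triangle in the zig-zag unpacks to an elementary divisibility step in $D$.

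The main obstacle will be the bookkeeping in the equivalence $\eqref{prop170710a2}\iff\eqref{prop170710a4}$, where the restriction to 1-tuples in the zig-zag forces one to build up divisibility relations in $D$ one irreducible factor at a time; one direction requires constructing an explicit zig-zag of the right length from a factorization into irreducibles (using the Claim in the proof of Theorem~\ref{prop170503a} to realize each irreducible as a weakly irreducible divisibility morphism between 1-tuples), while the other direction requires extracting the divisibility $\prod_iv_i\cdot\prod_jw_j\mid\prod_pz_p$ from the compatibility conditions at each node. Everything else is routine translation via the dictionary already established in Sections~\ref{sec170501a} and~\ref{sec170702a}.
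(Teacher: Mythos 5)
Most of your outline is sound and close to the paper's. The cycle $\eqref{prop170710a1}\Leftrightarrow\eqref{prop170710a2}\Leftrightarrow\eqref{prop170710a3}$ via Theorem~\ref{prop170503a} and the dictionary of Theorems~\ref{lem170501a}, \ref{lem170501azzz} and Corollaries~\ref{cor170501a}, \ref{cor170501azzz} is exactly what the paper does. Your translation of condition~\eqref{prop170710a5}(b) into the arithmetic statement ``for irreducibles $p,q$ dividing $c$, either $p\sim q$ or $pq\mid c$'' is correct (one still has to check that a weak equivalence $(v_i)\to(w_j)$ exists precisely when the distinguished irreducible entries are associate, which needs the explicit three-step construction the paper gives in its Claim, but that is routine), and your ``iterate the dichotomy'' argument that this statement plus atomicity forces every irreducible to be prime is a legitimate, self-contained alternative: the paper instead closes the loop through $\eqref{prop170710a4}$ and an appeal to \cite[Theorem~5.1]{coykendall:idg}.

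The genuine gap is in your treatment of $\eqref{prop170710a4}$(b). You assert that the zig-zag ``encodes the same divisibility relation $\prod_iv_i\cdot\prod_jw_j\mid\prod_pz_p$,'' but it does not. Setting $t_i=\prod_{\ell_i}t_{i,\ell_i}$, the diagonal morphisms into $(z)$ give $t_i\mid z$, and the weak irreducibility of the bottom row together with the associate conditions $t_{2k-2}t_{2k}\sim t_{2k-1}$ forces each $t_{2k}$ to be an irreducible divisor of $z$ with $t_{2k-2}t_{2k}\mid z$; in other words, the zig-zag is precisely a path from $v$ to $w$ in the irreducible divisor graph $G(z)$. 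For $K\geq 2$ this gives connectivity of $G(z)$, not the single relation $vw\mid z$ (completeness), so the dichotomy available from $\eqref{prop170710a4}$(b) is ``associate, or joined by a path,'' and your iteration --- which at each step consumes one irreducible factor of $c$ using ``$p\sim q$ or $pq\mid c$'' --- simply does not run. The implication ``atomic $+$ every $G(z)$ connected $\Rightarrow$ every irreducible is prime'' is exactly the nontrivial direction of \cite[Theorem~5.1]{coykendall:idg}, which the paper cites after carefully extracting the path data $(a_k)=(t_{2k})$ from the zig-zag; you must either invoke that theorem or reprove it, and as written your proposal does neither.
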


\begin{proof}
$\eqref{prop170710a1}\iff\eqref{prop170710a3}$
The 
integral
domain $D$ is a UFD if and only if it is atomic and every irreducible element of $D$ is prime. 
Theorem~\ref{prop170503a} shows that $D$ being atomic is equivalent to 
condition~\eqref{prop170710a3}(a).
If every 
weakly
irreducible morphism in $\catf(D \setminus \{0\})$ is 
weakly
prime, then 
every irreducible element of $D$ is prime by 
Theorems~\ref{lem170501a} and~\ref{lem170501azzz};
and conversely.

$\eqref{prop170710a1}\iff\eqref{prop170710a2}$
Argue as in the previous paragraph using Corollaries~\ref{cor170501a} and~\ref{cor170501azzz}.

$\eqref{prop170710a1}\implies\eqref{prop170710a5}$
Assume that $D$ is a UFD.
In particular, $D$ is atomic, so Theorem~\ref{prop170503a} shows that condition~(a) holds. 
For condition~(b), 
consider a pair of morphisms $(v_i)\xra{\hat f}(z_p)\xla{\hat g}(w_j)$ in $\catf(D\setminus\{0\})$ with $(v_i)$ and $(w_j)$ 
weakly
irreducible,
and assume that there is not a weak equivalence $(v_i)\to(w_j)$.
Since $(v_i)$ and $(w_j)$ are 
weakly
irreducible, Corollary~\ref{cor170501a} implies that there are integers $i_0\in[I]$ and
$j_0\in[J]$ such that $v_{i_0}$ and $w_{j_0}$ are irreducible in $D$, for all $i\in[I]\setminus\{i_0\}$ the element $v_i$ is a unit in $D$,
and for all $j\in[J]\setminus\{j_0\}$ the element $w_j$ is a unit in $D$.
The existence of the morphism $\hat f$ implies that $v_{i_0}\mid z_{p_1}$ for some $p_1\in[P]$,
and similarly $w_{j_0}\mid z_{p_2}$ for some $p_2\in[P]$,
as the assumption that $D$ is a UFD implies that $v_{i_0}$ and $w_{j_0}$ are prime in $D$.

Claim: The elements $v_{i_0}$ and $w_{j_0}$ are not associates in $D$.
By way of contradiction, suppose that $v_{i_0}$ and $w_{j_0}$ were  associates in $D$, 
say $u\in D$ is a unit such that $w_{j_0}=uv_{i_0}$.
Consider the next morphisms  in $\catf(D\setminus\{0\})$
where $\hat\epsilon$ drops all the units $v_i$ with $i\neq i_0$ 
and $\hat\sigma$ is the morphism induced by the condition $w_{j_0}\mid\prod_jw_j$.
$$(v_i)\xra{\hat\epsilon}(v_{i_0})\xra{\widehat{\id_{[1]}}}(uv_{i_0})=(w_{j_0})\xra{\hat\sigma}(w_j)$$
Remark~\ref{disc170501a} implies that the three morphisms in this display are weak equivalences.
Thus, the composition $\hat\sigma\circ\widehat{\id_{[1]}}\circ\hat\epsilon\colon (v_i)\to (w_j)$
is also a weak equivalence, by the 2-of-3 condition 
in Proposition~\ref{wk_equiv}.
This contradicts our assumption that there is not a weak equivalence $(v_i)\to(w_j)$,
establishing the Claim.

Set $(t)=(v_{i_0}w_{j_0})$.
Let $\hat\alpha$ be the composition of the following morphisms in $\catf(D\setminus\{0\})$
where $\hat\epsilon$ drops all the units $v_i$ with $i\neq i_0$ 
$$
(v_i)\xra{\hat\epsilon}(v_{i_0})\xra{\widehat{\id_{[1]}}}(v_{i_0}w_{j_0})=(t).
$$
Since $w_{j_0}$ is irreducible, Theorem~\ref{prop170501a} implies that $\hat\alpha$ is 
weakly irreducible. 
Similarly, there is a weakly irreducible morphism 
$(w_j)\to(t)$.
To complete the diagram~\eqref{eq170719a}, it suffices to exhibit a morphism $(t)=(v_{i_0}w_{j_0})\to(z_p)$.
In the case $p_1\neq p_2$, such a morphism comes from the  conditions $t=v_{i_0}w_{j_0}\mid z_{p_1}z_{p_2}\mid\prod_pz_p$.
In the case $p_1= p_2$, we have $v_{i_0},w_{j_0}\mid z_{p_1}$.
Since
$v_{i_0}$ and $w_{j_0}$ are non-associate irreducibles in $D$,
we conclude that $v_{i_0}w_{j_0}\mid z_{p_1}\mid\prod_pz_p$ because $D$ is a UFD.
This yields a morphism $(t)=(v_{i_0}w_{j_0})\to(z_p)$, thus
completing the diagram~\eqref{eq170719a} in this case, as well as the proof of this implication.

$\eqref{prop170710a5}\implies\eqref{prop170710a4}$
Theorem~\ref{prop170503a} shows that condition~\eqref{prop170710a5}(a) implies condition~\eqref{prop170710a4}(a).
Since condition~\eqref{prop170710a4}(b) is a special case of~\eqref{prop170710a5}(b),
this implication is established.

$\eqref{prop170710a4}\implies\eqref{prop170710a1}$
Assume that~\eqref{prop170710a4} holds.
Then condition~\eqref{prop170710a4}(a) implies that $D$ is atomic by Theorem~\ref{prop170503a}.
We use~\cite[Theorem~5.1]{coykendall:idg}
to show that $D$ is a UFD.
To this end, let $z,v,w\in D$ be such that $v$ and $w$ are non-associate irreducible divisors of $z$. 
It
suffices to exhibit irreducible divisors $a_1,\ldots,a_m$ of $z$
such that the following products all divide $z$: $va_1$, $a_1a_2$, \ldots, $a_{m-1}a_m$, and $a_mw$.
(In the notation of~\cite[Theorem~5.1]{coykendall:idg},
this will show that $G(z)$ is connected.)

The assumptions $v\mid z$ and $w\mid z$ provide morphisms $(v)\xra{\widehat{\id_{[1]}}}(z)\xla{\widehat{\id_{[1]}}}(w)$
in $\catf(D\setminus\{0\})$.
As $v$ and $w$ are non-associate irreducibles, there is not a weak equivalence $(v)\to(w)$.
So, condition~\eqref{prop170710a4}(b)
provides a positive integer $K$ and~morphisms
$$\xymatrix@C=6.3mm@R=2.5mm{
&&&(z)
\\ 
(v)&&&&&&(w)
\\
(t_{0,\ell_0})\ar[r]\ar[rrruu]^{\widehat{\id_{[1]}}^{(v)}_{(z)}}
\ar@{=}[u]
&(t_{1,\ell_1})\ar[rruu]
&(t_{2,\ell_2})\ar[r]\ar[l]\ar[ruu]
&(t_{3,\ell_3})\ar[uu]
&\cdots\ar[r]\ar[l]
&(t_{2K-1,\ell_{2K-1}})\ar[lluu]
&(t_{2K,\ell_{2K}})\ar[l]\ar[llluu]_{\widehat{\id_{[1]}}^{(w)}_{(z)}}\ar@{=}[u]
}$$
where each morphism in the bottom row  is weakly irreducible and
for each $k\in [K]$ the element $\prod_{\ell_{2k-2}}t_{2k-2,\ell_{2k-2}}\cdot\prod_{\ell_{2k}}t_{2k,\ell_{2k}}$ is associate
to $\prod_{\ell_{2k-1}}t_{2k-1,\ell_{2k-1}}$.
Thus, 
Theorem~\ref{lem170501a} provides irreducible elements 
$b_1,b_2,\ldots,b_{2K}\in D$ such that
for 
all $k\in[K]$
we have
\begin{gather*}
\textstyle
b_{2k-1}\prod_{\ell_{2k-2}}t_{2k-2,\ell_{2k-2}}=\prod_{\ell_{2k-1}}t_{2k-1,\ell_{2k-1}}=b_{2k}\prod_{\ell_{2k}}t_{2k,\ell_{2k}}.
\end{gather*}

For  $i=0,\ldots,2K$, set $t_i=\prod_{\ell_i}t_{i,\ell_i}$.
Then the above equalities involving $b_i$ read 
\begin{equation}\label{eq170724a}
b_{2k-1}t_{2k-2}=t_{2k-1}=b_{2k}t_{2k}
\end{equation}
for $k\in[K]$.
Furthermore, 
the vertical/diagonal arrows in the above diagram in $\catf(D\setminus\{0\})$
show that $t_i\mid z$ for all $i\in[2K-1]$; in particular, this shows that $t_{2k-1}\mid z$ for all $k\in [K]$.

The equations~\eqref{eq170724a} in the case $k=1$ show that 
$b_1t_0=t_1=b_2t_2$.
By assumption, we have $t_1\sim t_0t_2$, where $\sim$ is the associate relation. 
Combining these relations, we find that
$b_1t_0\sim t_0t_2\sim b_2t_2$ and hence $b_1\sim t_2$ and $b_2\sim t_0$.
In particular, since $b_1$ is irreducible, so is $t_2$, that is, $t_2$ is an irreducible divisor of $z$ such that $t_2v=t_2t_0\sim t_1\mid z$,
by the previous paragraph. 
Arguing similarly, we see that for all $k\in [K]$ the element $t_{2k}$ is an irreducible divisor of $z$ such that $t_{2k-2}t_{2k}\sim t_{2k-1}\mid z$.
For $k=K$, this reads as $t_{2K-2}w\sim t_{2K-1}\mid z$.

In summary, this shows that we have irreducible divisors $t_2,\ldots,t_{2K-2}$ of $z$
such that the following products all divide $z$: $vt_2$, $t_2t_4$, \ldots, $t_{2K-4}t_{2K-2}$, and $t_{2K-2}w$.
In other words, the elements $a_k=t_{2k}$ for $k\in[K-1]$ satisfy the conditions described in the first paragraph of this part of the proof.
So $D$ is a UFD by~\cite[Theorem~5.1]{coykendall:idg}.
\end{proof}

We continue with 
similar characterizations of other properties from~\cite{AAZ},
beginning with
the ACCP (ascending chain condition on principal ideals)
property.

\begin{thm}\label{prop170503b}
The following conditions are equivalent:
\begin{enumerate}[\rm(i)]
\item\label{prop170503b1}
the 
integral
domain $D$ satisfies ACCP,
\item\label{prop170503b2}
every chain of morphisms $(x_{1,n})_{n=1}^{N_1} \leftarrow (x_{2,n})_{n=1}^{N_2} \leftarrow \cdots$ in 
$\catf(D\setminus\{0\})$ stabilizes to a chain of weak equivalences,
that is, for every such chain there is an index $i$ such that each morphism $(x_{i,n})_{n=1}^{N_i}\from(x_{i+1,n})_{n=1}^{N_{i+1}}\from\cdots$ is
a weak equivalence, and
\item\label{prop170503b3}
every chain of morphisms $(x_1) \leftarrow (x_2) \leftarrow \cdots$ of 1-tuples in 
$\catf(D\setminus\{0\})$ stabilizes to a chain of isomorphisms (equivalently, weak equivalences).
\end{enumerate}
\end{thm}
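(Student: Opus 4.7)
My plan is to prove the cycle $\eqref{prop170503b1}\Rightarrow\eqref{prop170503b2}\Rightarrow\eqref{prop170503b3}\Rightarrow\eqref{prop170503b1}$, using Lemma~\ref{lem190110a} and Theorem~\ref{iso-thm} as the main translation tools between conditions on elements of $D$ and conditions on morphisms in $\catf(D\setminus\{0\})$. The basic dictionary is that a morphism $\hat f\colon(x_n)\to(y_m)$ in $\catf(D\setminus\{0\})$ carries a unique element $r\in D\setminus\{0\}$ with $r\prod_nx_n=\prod_my_m$ (Remark~\ref{disc171015b}), and Lemma~\ref{lem190110a} says $\hat f$ is a weak equivalence exactly when this $r$ is a unit.

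For $\eqref{prop170503b1}\Rightarrow\eqref{prop170503b2}$, I would start with a chain
$(x_{1,n})_{n=1}^{N_1}\xla{\hat f_1}(x_{2,n})_{n=1}^{N_2}\xla{\hat f_2}\cdots$
and set $y_i=\prod_{n=1}^{N_i}x_{i,n}$. Applying the functor $\mathfrak A_{D\setminus\{0\}}$ from Definition~\ref{defn170501b} (or using Remark~\ref{disc171015b} directly) gives an element $r_i\in D\setminus\{0\}$ with $r_iy_{i+1}=y_i$ for every $i$, so $(y_1)\subseteq(y_2)\subseteq\cdots$ is an ascending chain of principal ideals in $D$. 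By ACCP this chain stabilizes at some index $i_0$, and $(y_i)=(y_{i+1})$ for $i\geq i_0$ forces each $r_i$ to be a unit. An application of Lemma~\ref{lem190110a} then says that $\hat f_i$ is a weak equivalence for every $i\geq i_0$.

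For $\eqref{prop170503b2}\Rightarrow\eqref{prop170503b3}$, note that a chain of 1-tuples is a special case of the chains in (ii), so the chain stabilizes to weak equivalences. Then I would observe that a weak equivalence $\hat g\colon(x)\to(y)$ between 1-tuples has underlying function $\id_{[1]}$, and the element $r$ attached to it is a unit with $rx=y$; hence $x$ and $y$ are associates, so by Theorem~\ref{iso-thm} the morphism $\hat g$ is in fact an isomorphism. The parenthetical equivalence in (iii) is included here: every isomorphism is a weak equivalence by Remark~\ref{disc170501a}, and conversely as just argued.

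For $\eqref{prop170503b3}\Rightarrow\eqref{prop170503b1}$, I would start with an ascending chain of principal ideals $(a_1)\subseteq(a_2)\subseteq\cdots$ in $D$. Each inclusion $(a_i)\subseteq(a_{i+1})$ is the divisibility $a_{i+1}\mid a_i$, which by Proposition~\ref{singletons_unique} produces the unique morphism $\widehat{\id_{[1]}}\colon(a_{i+1})\to(a_i)$ in $\catf(D\setminus\{0\})$. Assumption (iii) says this chain of 1-tuples stabilizes to isomorphisms, and by Theorem~\ref{iso-thm} this means $a_i$ and $a_{i+1}$ are associates for all sufficiently large $i$, i.e., the original chain of ideals stabilizes. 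The main obstacle here is really only bookkeeping: keeping the direction of morphisms consistent with the direction of inclusions of principal ideals, since a morphism $(y)\to(x)$ in $\catf(D\setminus\{0\})$ corresponds to $y\mid x$ while the associated ideal inclusion $(x)\subseteq(y)$ runs the other way.
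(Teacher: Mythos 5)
Your proposal is correct and follows essentially the same route as the paper: the same cycle of implications, the same translation via products and Lemma~\ref{lem190110a} between stabilizing chains of principal ideals and chains of weak equivalences, and the same observation (via Theorem~\ref{iso-thm}) that weak equivalences between 1-tuples are isomorphisms. No gaps.
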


\begin{proof}
First, compare Theorem~\ref{iso-thm} and Definition~\ref{defn170501d} to see that a morphism 
$(x)\to(y)$ between 1-tuples in $\catf(D\setminus\{0\})$ is a weak equivalence if and only if it is an isomorphism. 

$\eqref{prop170503b1}\implies\eqref{prop170503b2}$
Assume that $D$ 
satisfies
ACCP, and consider a chain of morphisms 
$$(x_{1,n})_{n=1}^{N_1} \xla{\widehat{f_1}} (x_{2,n})_{n=1}^{N_2} \xla{\widehat{f_2}} \cdots$$ 
in 
$\catf(D\setminus\{0\})$.
This provides a chain of divisibility relations 
$$\textstyle\cdots \mid \prod_{n=1}^{N_2} x_{2,n} \mid  \prod_{n=1}^{N_1}x_{1,n}$$
hence a chain of principal ideals 
$\langle \prod_{n=1}^{N_1}x_{1,n}\rangle\subseteq\langle \prod_{n=1}^{N_2} x_{2,n}\rangle\subseteq\cdots$.
ACCP implies that
this chain stabilizes, so there is an integer $i$ such that
for each $j\geq i$ there is a unit $u_i\in D$ such that
$\prod_{n=1}^{N_j} x_{j,n}=u_j\prod_{n=1}^{N_{j+1}} x_{j+1,n}$.
Lemma~\ref{lem190110a}
implies that $\widehat{f_j}$ is a weak equivalence for each $j\geq i$, as desired.

$\eqref{prop170503b2}\implies\eqref{prop170503b3}$
Condition~\eqref{prop170503b3} is the special case of condition~\eqref{prop170503b2}
where $N_i=1$ for all $i$.
Thus, this implication is clear.

$\eqref{prop170503b3}\implies\eqref{prop170503b1}$
Assume that condition~\eqref{prop170503b3} is satisfied. A chain
of principal ideals $\langle x_1\rangle\subseteq\langle x_2\rangle\subseteq\cdots$
yields a chain of divisibility morphisms
$(x_1)\xla{\widehat{\id_{[1]}}} (x_2)\xla{\widehat{\id_{[1]}}}\cdots$
which stabilizes by assumption. 
As in the proof of $\eqref{prop170503b1}\implies\eqref{prop170503b2}$, 
one concludes that the original chain of principal ideals also stabilizes.
\end{proof}

Recall that  $D$ is a \emph{half-factorial domain (HFD)}
provided that it is atomic and for every pair of irreducible factorizations $p_1\cdots p_m=q_1\cdots q_n$ in $D$,
we have $m=n$. (Note that originally HFD's were not assumed to be atomic.)
Our next result characterize HFDs
as in Theorem~\ref{prop170503a};
see also Theorem~\ref{prop170918a}.

\begin{thm}\label{prop170503d}
The following conditions are equivalent:
\begin{enumerate}[\rm(i)]
\item\label{prop170503d1}
the integral domain $D$ is an HFD;
\item\label{prop170503d2'}
every morphism $\hat f\colon (x_n)\to(y_m)$ in $\catf(D \setminus \{0\})$ between non-empty tuples decomposes as a finite composition
$(x_n)\to\cdots\to(y_m)$ of 
weakly
irreducible morphisms and weak equivalences,
and for every such
morphism $\hat f$  every such decomposition
for $\hat f$ has the same number of 
weakly
irreducible morphisms;
\item\label{prop170503d2}
every tuple $(x_n)\neq\emptytuple$ in $\catf(D \setminus \{0\})$ admits a finite chain
$(1)\to\cdots\to(x_n)$ of 
weakly
irreducible morphisms and weak equivalences, and every such chain
for $(x_n)$ has the same number of 
weakly
irreducible morphisms; 
\item\label{prop170503d3}
every 1-tuple $(x)$ admits a finite chain
$(1)\to\cdots\to(x)$ of 
weakly
irreducible morphisms and weak equivalences, and every such chain
for $(x)$ has the same number of 
weakly
irreducible morphisms; and
\item\label{prop170503d3'}
every 1-tuple $(x)$ admits a finite chain
$(1)\to\cdots\to(x)$ of 
weakly
irreducible morphisms and weak equivalences consisting entirely of 1-tuples, and every such chain
for $(x)$ has the same number of 
weakly
irreducible morphisms.
\end{enumerate}
\end{thm}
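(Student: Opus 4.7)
My plan is to model this proof on that of Theorem~\ref{prop170503a}, following the same cycle of implications (i) $\implies$ (ii') $\implies$ (ii) $\implies$ (iii) $\implies$ (i), together with (i) $\implies$ (iii') $\implies$ (i). By Theorem~\ref{prop170503a}, the existence halves of (ii'), (ii), (iii), and (iii') are each equivalent to atomicity of $D$, which follows from the HFD hypothesis in (i). Hence the task in each implication reduces entirely to the length-uniqueness halves.

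The key technical observation I would establish first is the following correspondence. Given any finite composition
\[
(x_{0,\ell_0}) \xra{\hat g_1} (x_{1,\ell_1}) \xra{\hat g_2} \cdots \xra{\hat g_k} (x_{k,\ell_k})
\]
in $\catf(D\setminus\{0\})$ whose morphisms are each either weakly irreducible or weak equivalences, the number of indices $i \in [k]$ for which $\hat g_i$ is weakly irreducible equals the length of any irreducible factorization of the multiplier $r \in D\setminus\{0\}$ from Remark~\ref{disc171015b} satisfying $\prod_{\ell_k} x_{k,\ell_k} = r \prod_{\ell_0} x_{0,\ell_0}$. To see this, for each $i$ let $r_i \in D \setminus \{0\}$ denote the multiplier of $\hat g_i$ from Remark~\ref{disc171015b}; iterating that remark along the chain gives $r = r_1 r_2 \cdots r_k$. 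By Lemma~\ref{lem190110a}, the factor $r_i$ is invertible in $D$ precisely when $\hat g_i$ is a weak equivalence, and by Theorem~\ref{lem170501a} the factor $r_i$ is irreducible in $D$ precisely when $\hat g_i$ is weakly irreducible. Collecting the weak-equivalence contributions into a single unit $u$, one obtains $r = u \cdot q_1 \cdots q_m$ where $q_1,\ldots,q_m$ are irreducible in $D$ and $m$ is the number of weakly irreducible morphisms in the chain. Conversely, any such factorization $r = u q_1 \cdots q_m$ can be realized as a chain of 1-tuples obtained by successive divisibility morphisms, as in the Claim inside the proof of Theorem~\ref{prop170503a}.

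With this correspondence in hand, the HFD hypothesis~(i) ensures that the number of weakly irreducible morphisms in any chain is determined by the common multiplier $r$ of all decompositions of a fixed morphism $\hat f$ (or by the endpoint $x$ in the 1-tuple cases), so the length-uniqueness halves of (ii'), (ii), (iii), and (iii') all follow simultaneously. For the reverse implications (iii) $\implies$ (i) and (iii') $\implies$ (i), I would observe that any two irreducible factorizations $x = p_1 \cdots p_k = q_1 \cdots q_{k'}$ of a non-unit $x \in D$ induce two chains $(1) \to \cdots \to (x)$ through 1-tuples with $k$ and $k'$ weakly irreducible morphisms respectively, so the uniqueness assumption forces $k = k'$. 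The trivial specializations (ii') $\implies$ (ii) (set $(x_n) = (1)$) and (ii) $\implies$ (iii) (set $(y_m) = (x)$) then close the cycle.

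The main obstacle will be verifying the multiplicativity claim $r = r_1 \cdots r_k$ asserted in the second paragraph. Although this is intuitively clear and follows inductively from Remark~\ref{disc171015b} together with the cancellativity of $D\setminus\{0\}$, one must be careful that intermediate tuples may have varying lengths, so the identity should be tracked in terms of the total products $\prod_{\ell_i}x_{i,\ell_i}$ rather than entry by entry. Once the correspondence is precisely formulated, the implications reduce to routine bookkeeping and the HFD condition translates directly into length-uniqueness of chains.
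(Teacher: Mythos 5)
Your proposal is correct and follows essentially the same route as the paper: the paper likewise reduces everything to the correspondence between the multipliers $s_k$ of the morphisms in a chain (via Remark~\ref{disc171015b}, Lemma~\ref{lem190110a}, and Theorem~\ref{lem170501a}) and an irreducible factorization of the overall multiplier $a=\prod_k s_k$, proving $\eqref{prop170503d1}\implies\eqref{prop170503d2'}$ in detail and handling the remaining implications as in Theorem~\ref{prop170503a}. Your treatment of the reverse implications via two chains of $1$-tuples matching two irreducible factorizations is exactly the intended bookkeeping.
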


\begin{proof}
We prove the implication $\eqref{prop170503d1}\implies\eqref{prop170503d2'}$;
the remainder of the proof follows as for 
Theorem~\ref{prop170503a}.
Assume that $D$ is an HFD, and let $\hat f\colon (x_n)\to(y_m)$ be a morphism in $\catf(D \setminus \{0\})$ between non-empty tuples.
Since $D$ is atomic, Theorem~\ref{prop170503a} implies that $\hat f$ 
decomposes as a finite chain
$(x_n)\to\cdots\to(y_m)$ of 
weakly
irreducible morphisms and weak equivalences.
Let $a\in A$ be such that $a\prod_nx_n=\prod_my_m$ and factor $a$ as a product of irreducibles $a=\prod_{i=1}^Ir_i$.
The decomposition of $\hat f$ in the proof of Theorem~\ref{prop170503a} has exactly $I$ 
weakly
irreducible morphisms.

Let 
$(x_n)=(x_n)_{n=1}^N\xra{\widehat{f_1}}(x_{1,n_1})_{n_1=1}^{N_1}\xra{\widehat{f_2}}\cdots\xra{\widehat{f_K}}(x_{K,n_K})_{n_K=1}^{N_K}=(y_m)$
be a decomposition of $\hat f$ as a finite chain of 
weakly
irreducible morphisms and weak equivalences.
We need to show that the number of 
weakly
irreducible morphisms in this chain is $I$.
Since each $\widehat{f_k}$ is a morphism, there is an element $s_k\in A$ such that
$\prod_{n_k}x_{k,n_k}=s_k\prod_{n_{k-1}}x_{k-1,n_{k-1}}$.
Composing the $\widehat{f_k}$'s we obtain
$$
\textstyle\prod_ks_k\prod_nx_n=
\prod_my_m=a\prod_nx_n$$
so cancellation implies that $a=\prod_ks_k$.
Since the $\widehat{f_k}$'s are   weak equivalences and 
weakly
irreducible morphisms,
Lemma~\ref{lem190110a} and Theorem~\ref{lem170501a}
imply that the 
corresponding $s_k$'s are units and irreducibles, respectively.
Since $D$ is an HFD, the number of irreducible $s_k$'s must be exactly $I$.
Since the number of 
weakly
irreducible $\widehat{f_k}$'s is the same as the number of irreducible $s_k$'s, our proof is complete.
\end{proof}

The following characterizations of 
bounded factorization domains (BFDs)
and finite factorization domains (FFDs)
are proved like 
Theorem~\ref{prop170503d}.
Recall that  $D$ is a \emph{BFD} if it is atomic, and for every non-zero non-unit $x\in D$
there is a bound (depending only on $x$) on the numbers of factors in each irreducible factorization of $x$.
Also, $D$ is an \emph{FFD} if it is atomic, and each non-zero non-unit in $D$ has only finitely many irreducible factors
(up to associates).

\begin{thm}\label{prop170710z}
The following conditions are equivalent:
\begin{enumerate}[\rm(i)]
\item\label{prop170710z1}
the integral domain $D$ is a BFD;
\item\label{prop170710z2}
every morphism $\hat f\colon (x_n)\to(y_m)$ in $\catf(D \setminus \{0\})$ between non-empty tuples decomposes as a finite composition
$(x_n)\to\cdots\to(y_m)$ of 
weakly
irreducible morphisms and weak equivalences, and for every such morphism $\hat f$ 
there is a bound (depending only on $\hat f$) on the number of 
weakly
irreducible morphisms in each such decomposition of $\hat f$;
\item\label{prop170710z3}
every tuple $(x_n)\neq\emptytuple$ in $\catf(D \setminus \{0\})$ admits a finite chain
$(1)\to\cdots\to(x_n)$ of 
weakly
irreducible morphisms and weak equivalences, and for every such tuple $(x_n)$ 
there is a bound (depending only on $(x_n)$) on the number of 
weakly
irreducible morphisms in each such chain
for $(x_n)$; 
\item\label{prop170710z4}
every 1-tuple $(x)$ admits a finite chain
$(1)\to\cdots\to(x)$ of 
weakly
irreducible morphisms and weak equivalences, and 
for every such 1-tuple $(x)$ 
there is a bound (depending only on $(x)$) on the number of 
weakly
irreducible morphisms in each such chain
for $(x)$; and
\item\label{prop170710z4'}
every 1-tuple $(x)$ admits a finite chain
$(1)\to\cdots\to(x)$ of 
weakly
irreducible morphisms and weak equivalences consisting entirely of 1-tuples, and 
for every such 1-tuple $(x)$ 
there is a bound (depending only on $(x)$) on the number of 
weakly
irreducible morphisms in each such chain
for $(x)$.
\end{enumerate}
\end{thm}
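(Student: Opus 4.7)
The plan is to follow the template of Theorem~\ref{prop170503d}, replacing the equality of counts of weakly irreducible morphisms with a uniform upper bound. The core implication is $\eqref{prop170710z1}\implies\eqref{prop170710z2}$, and the remaining implications are specializations, minor reversals, or direct consequences of the Claim in the proof of Theorem~\ref{prop170503a}.

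For $\eqref{prop170710z1}\implies\eqref{prop170710z2}$, I would argue as follows. Since $D$ is a BFD it is atomic, so Theorem~\ref{prop170503a} already provides the existence of a decomposition of $\hat f\colon(x_n)\to(y_m)$ as a finite composition of weakly irreducible morphisms and weak equivalences. For the bound, let $a\in D\setminus\{0\}$ be the unique element from Remark~\ref{disc171015b} satisfying $a\prod_n x_n=\prod_m y_m$, and let $B(a)$ denote the BFD bound attached to $a$. Given an arbitrary decomposition $(x_n)=(x_{0,n})\xra{\widehat{f_1}}(x_{1,n})\xra{\widehat{f_2}}\cdots\xra{\widehat{f_K}}(x_{K,n})=(y_m)$, the morphism condition for each $\widehat{f_k}$ supplies an element $s_k\in D\setminus\{0\}$ with $\prod_{n}x_{k,n}=s_k\prod_{n}x_{k-1,n}$; telescoping and cancelling yields $a=\prod_k s_k$, exactly as in Theorem~\ref{prop170503d}. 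By Lemma~\ref{lem190110a} and Theorem~\ref{lem170501a}, the factor $s_k$ is a unit precisely when $\widehat{f_k}$ is a weak equivalence and is irreducible precisely when $\widehat{f_k}$ is weakly irreducible. After discarding unit factors, the remaining $s_k$'s give an irreducible factorization of (a unit multiple of) $a$, whose length is bounded by $B(a)$; this bound depends only on $\hat f$.

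The chain of implications $\eqref{prop170710z2}\implies\eqref{prop170710z3}\implies\eqref{prop170710z4}$ and $\eqref{prop170710z4'}\implies\eqref{prop170710z4}$ are all trivial specializations (setting $(x_n)=(1)$, then $N=1$, or restricting the tuples in the chain). For $\eqref{prop170710z1}\implies\eqref{prop170710z4'}$, I would invoke the Claim in the proof of Theorem~\ref{prop170503a}, which supplies a decomposition of the natural divisibility morphism $(1)\to(x)$ through 1-tuples using exactly as many weakly irreducible factors as the length of a chosen irreducible factorization of $x$, and then combine this with the bounding argument of the first paragraph.

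For $\eqref{prop170710z4}\implies\eqref{prop170710z1}$, atomicity of $D$ follows from Theorem~\ref{prop170503a} applied to the existence half of condition~\eqref{prop170710z4}. For the bound, fix a non-zero non-unit $x\in D$ and let $B(x)$ denote the bound supplied by~\eqref{prop170710z4} for the 1-tuple $(x)$. Given any irreducible factorization $x=r_1\cdots r_m$, the Claim from the proof of Theorem~\ref{prop170503a} yields a chain $(1)\xra{\widehat{\id_{[1]}}}(r_1)\xra{\widehat{\id_{[1]}}}(r_1 r_2)\xra{\widehat{\id_{[1]}}}\cdots\xra{\widehat{\id_{[1]}}}(x)$ of exactly $m$ weakly irreducible morphisms and no weak equivalences (each step is weakly irreducible by Proposition~\ref{prop170501b}). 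Therefore $m\leq B(x)$, so $D$ is a BFD. The main bookkeeping obstacle is simply ensuring that the correspondence between the $s_k$'s and the morphisms $\widehat{f_k}$ really is a bijection preserving the unit/irreducible dichotomy for every decomposition, which is exactly what the cancellativity of $D\setminus\{0\}$ together with Lemma~\ref{lem190110a} and Theorem~\ref{lem170501a} guarantee.
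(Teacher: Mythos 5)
Your proposal is correct and matches the paper's intended argument: the paper gives no separate proof for this theorem, stating only that it ``is proved like Theorem~\ref{prop170503d},'' and your telescoping argument producing $a=\prod_k s_k$ together with Lemma~\ref{lem190110a} and Theorem~\ref{lem170501a} to match units/irreducibles with weak equivalences/weakly irreducible morphisms is exactly that template, with the HFD equality of lengths replaced by the BFD bound. The remaining implications (specializations, the Claim from Theorem~\ref{prop170503a}, and the explicit chain $(1)\to(r_1)\to\cdots\to(x)$ for the converse) are likewise the expected adaptations.
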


\begin{thm}\label{prop170710y}
The following conditions are equivalent:
\begin{enumerate}[\rm(i)]
\item\label{prop170710y1}
the integral domain $D$ is an FFD;
\item\label{prop170710y2}
every morphism $\hat f\colon (x_n)\to(y_m)$ in $\catf(D \setminus \{0\})$ between non-empty tuples decomposes as a finite chain
$(x_n)\to\cdots\to(y_m)$ of 
weakly
irreducible morphisms and weak equivalences,
and every such 
morphism
$\hat f$ has a finite number of non-weakly-associate weak divisors;
\item\label{prop170710y3}
every tuple $(x_n)\neq\emptytuple$ in $\catf(D \setminus \{0\})$ admits a finite chain
$(1)\to\cdots\to(x_n)$ of 
weakly
irreducible morphisms and weak equivalences, and 
the morphism
$(1)\to(x_n)$ has a finite number of non-weakly-associate weak divisors;
\item\label{prop170710y4}
every 1-tuple $(x)$ admits a finite chain
$(1)\to\cdots\to(x)$ of 
weakly
irreducible morphisms and weak equivalences, and 
the morphism
$(1)\to(x)$ has a finite number of non-weakly-associate weak divisors;
and
\item\label{prop170710y4'}
every 1-tuple $(x)$ admits a finite chain
$(1)\to\cdots\to(x)$ of 
weakly
irreducible morphisms and weak equivalences consisting entirely of 1-tuples, and 
the morphism
$(1)\to(x)$ has a finite number of non-weakly-associate weak divisors.
\end{enumerate}
\end{thm}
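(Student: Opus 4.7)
My plan is to prove Theorem~\ref{prop170710y} by the same pattern that proves Theorems~\ref{prop170503d} and~\ref{prop170710z}: the chain-decomposition clauses equate with atomicity via Theorem~\ref{prop170503a}, and the finiteness condition on weak divisors translates to the finiteness condition on element-level divisors via a bijection set up below.

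The key technical ingredient I would establish first is: for any morphism $\hat f\colon (x_n)\to(y_m)$ in $\catf(D\setminus\{0\})$, with associated element $r\in D\setminus\{0\}$ satisfying $r\prod_n x_n=\prod_m y_m$ as in Remark~\ref{disc171015b}, there is a natural bijection between the collection of weak divisors of $\hat f$ up to the weakly associate relation and the collection of divisors of $r$ in $D$ up to the associate relation. The bijection sends the weakly associate class of $\hat g\colon (v_i)\to(w_j)$, with associated element $s$, to the associate class of $s$ in $D$. This map lands in divisors of $r$ and in fact characterizes weak divisibility of $\hat f$ by Theorem~\ref{prop170708a}\eqref{prop170708a5}; it is well-defined and injective by Proposition~\ref{prop170715a}; and it is surjective because for each $s\mid r$ the morphism $\widehat{\id_{[1]}}\colon (1)\to(s)$ has associated element $s$ and weakly divides $\hat f$ by Theorem~\ref{prop170708a}.

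With the bijection in hand, the implications become routine. For $\eqref{prop170710y1}\implies\eqref{prop170710y2}$, the atomicity half of FFD yields the chain decomposition of $\hat f$ through Theorem~\ref{prop170503a}, while the FFD hypothesis supplies finitely many non-associate divisors of $r$ in $D$, which the bijection converts to finitely many non-weakly-associate weak divisors of $\hat f$. The implications $\eqref{prop170710y2}\implies\eqref{prop170710y3}\implies\eqref{prop170710y4}$, $\eqref{prop170710y3}\implies\eqref{prop170710y4'}$, and $\eqref{prop170710y4'}\implies\eqref{prop170710y4}$ are immediate specializations. For $\eqref{prop170710y4}\implies\eqref{prop170710y1}$, the chain hypothesis on 1-tuples yields atomicity by Theorem~\ref{prop170503a}; given a non-zero non-unit $x\in D$, the morphism $(1)\to(x)$ has associated element $x$, and the bijection converts its finitely many non-weakly-associate weak divisors into finitely many non-associate divisors of $x$, so $D$ is an FFD.

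The main obstacle I anticipate is articulating the bijection cleanly, particularly tracking that it works uniformly regardless of the shapes of the domains and codomains of competing weak divisors (which need not match the shape of $\hat f$), and handling edge cases where tuples are empty or consist entirely of units so that the associated element is trivially a unit. Once that bijection is in place, the proof tracks Theorems~\ref{prop170503d} and~\ref{prop170710z} essentially line for line, with ``number of weakly irreducible morphisms in a chain'' replaced by ``number of non-weakly-associate weak divisors''.
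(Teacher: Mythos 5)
Your proposal is correct and follows the route the paper intends: the paper gives no separate proof of Theorem~\ref{prop170710y}, stating only that it is proved like Theorem~\ref{prop170503d}, and the translation you set up --- weak divisors of $\hat f$ modulo the weakly associate relation corresponding bijectively, via Theorem~\ref{prop170708a}\eqref{prop170708a5} and Proposition~\ref{prop170715a}, to divisors of the associated element $r$ modulo associates --- is exactly the use for which the paper says Proposition~\ref{prop170715a} is ``singled out \dots for perspective in Theorem~\ref{prop170710y} below.'' The only point worth making explicit is that passing between the paper's definition of FFD (atomic plus finitely many non-associate \emph{irreducible} divisors of each element) and the condition your bijection delivers (finitely many non-associate divisors of each element, irreducible or not) uses the standard equivalence of these characterizations from~\cite{AAZ}, which the section's framing clearly licenses.
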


We conclude by souping up a characterization of HFDs due to Zaks~\cite{zaks}.
For this result, set $\bbn_0=\bbn\cup\{0\}$.

\begin{thm}\label{prop170918a}
The following conditions are equivalent.
\begin{enumerate}[\rm(i)]
\item\label{prop170918a1}
The integral domain $D$ is an HFD and not a field.
\item\label{prop170918a2}
There is a function $\zaksmor\colon\mord\to\bbn_0$
such that 
\begin{enumerate}[\rm(\protect{\ref{prop170918a2}}-1)]
\item\label{prop170918a2a}
$\im(\zaksmor)=\bbn_0$, 
\item\label{prop170918a2d}
$\zaksmor(\hat f)=0$ if and only if $\hat f$ is a weak equivalence in $\catf(D\setminus\{0\})$,
\item\label{prop170918a2b}
$\zaksmor(\hat f)=1$ if and only if $\hat f$ is weakly irreducible in $\catf(D\setminus\{0\})$,
and
\item\label{prop170918a2c}
$\zaksmor(\hat g\circ\hat f)=\zaksmor(\hat g)+\zaksmor(\hat f)$ for all $\hat f$ and $\hat g$ such that $\hat g\circ\hat f$ is defined.
\end{enumerate}
\item\label{prop170918a3}
There is a function $\zaksobj\colon\objd\to\bbn_0$
such that 
\begin{enumerate}[\rm(\protect{\ref{prop170918a3}}-1)]
\item\label{prop170918a3a}
$\im(\zaksobj)=\bbn_0$, 
\item\label{prop170918a3b}
$\zaksobj((x_n))=1$ if and only if $(x_n)$ is  
weakly
irreducible in $\catf(D\setminus\{0\})$,
\item\label{prop170918a3c}
$\zaksobj((x_n)\otimes(y_m))=\zaksobj((x_n))+\zaksobj((y_m))$ for all  $(x_n),(y_m)$, and
\item\label{prop170918a3d}
for each morphism $\hat f\colon(x_n)\to(y_m)$ we have $\zaksobj((x_n))\leq\zaksobj((y_m))$,
with equality holding if and only if $\hat f$ is a weak equivalence in $\catf(D\setminus\{0\})$.
\end{enumerate}
\end{enumerate}
\end{thm}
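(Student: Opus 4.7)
The plan is to prove (i)$\iff$(ii) and (i)$\iff$(iii) by using the HFD length function to construct $\zaksmor$ and $\zaksobj$ in the forward directions, and by running a descent argument to recover atomicity and half-factoriality in the reverse directions.

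For (i)$\implies$(ii), assume $D$ is an HFD and not a field. For each morphism $\hat f\colon(x_n)\to(y_m)$ in $\catf(D\setminus\{0\})$, Remark~\ref{disc171015b} provides a unique $r\in D\setminus\{0\}$ with $r\prod_n x_n=\prod_m y_m$; define $\zaksmor(\hat f)$ to be the number of irreducible factors in any irreducible factorization of $r$, with units assigned the value $0$. The HFD hypothesis makes this well-defined and renders length additive on products. The ``$0$ iff weak equivalence'' clause is then Lemma~\ref{lem190110a}; the ``$1$ iff weakly irreducible'' clause is Theorem~\ref{lem170501a}; and additivity under composition follows because composing $\hat f$ with $\hat g$ multiplies the corresponding $r$-elements. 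Surjectivity onto $\bbn_0$ uses the non-field hypothesis: fix any irreducible $p\in D$ and observe that $(1)\to(p^n)$ has $\zaksmor$-value $n$ for every $n\in\bbn_0$. The construction for (i)$\implies$(iii) is completely parallel: define $\zaksobj((x_n))$ as the HFD-length of $\prod_n x_n$ (with $\zaksobj(\emptytuple)=0$), then apply Corollary~\ref{cor170501a} for the ``$1$ iff weakly irreducible'' clause, length-additivity on products for the tensor clause, and Lemma~\ref{lem190110a} for the ``morphism implies $\leq$, equality iff weak equivalence'' clause.

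For (ii)$\implies$(i), first rule out $D$ being a field: if it were, Lemma~\ref{lem190110a} would force every morphism in $\catf(D\setminus\{0\})$ to be a weak equivalence, so $\zaksmor$ would be identically zero, contradicting surjectivity. Atomicity is obtained by descent. If some non-zero non-unit $x\in D$ fails to be a product of irreducibles, then $x$ is itself reducible, so $x=yz$ with $y,z$ both non-units and at least one of them --- say $y$ --- still failing to be a product of irreducibles. Applying additivity to the composition $(1)\to(y)\to(x)$ together with the ``$0$ iff weak equivalence'' clause yields $\zaksmor((1)\to(y))<\zaksmor((1)\to(x))$, and iterating produces an impossible infinite strictly descending chain in $\bbn_0$. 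For half-factoriality, given two irreducible factorizations $x=p_1\cdots p_m=q_1\cdots q_n$, factor $(1)\to(x)$ as $(1)\to(p_1)\to(p_1p_2)\to\cdots\to(p_1\cdots p_m)=(x)$; by Proposition~\ref{prop170501b} each step is weakly irreducible, so additivity plus the ``$1$ iff weakly irreducible'' clause gives $\zaksmor((1)\to(x))=m$, and likewise $=n$, hence $m=n$.

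The implication (iii)$\implies$(i) runs analogously: $D$ is not a field because in a field the tensor and morphism clauses of $\zaksobj$ would collapse it to $0$ identically, contradicting surjectivity; atomicity follows from the same descent using the morphism $\widehat{\id_{[1]}}\colon(y)\to(x)$ whose associated $r$-element is the non-unit $z$; and half-factoriality follows by comparing $\zaksobj((p_1,\ldots,p_m))=m$ with $\zaksobj((q_1,\ldots,q_n))=n$, both of which equal $\zaksobj((x))$ via the factorization morphisms from $(x)$ to each of these tuples, which are weak equivalences by Remark~\ref{disc170501a}. The main obstacle I anticipate is the atomicity descent in the reverse implications: the crux is the observation that in any non-trivial factorization $x=yz$ of a non-atomic element, at least one of $y,z$ must itself be non-atomic, which is what guarantees the descent never stops at an atomic intermediate and thereby yields the required infinite strictly descending chain in $\bbn_0$.
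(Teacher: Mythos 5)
Your proof is correct, and the forward implications (i)$\implies$(ii) and (i)$\implies$(iii) coincide with the paper's: both define the length function $\zakselt$ via the HFD hypothesis, push it onto morphisms through the unique element $r$ of Remark~\ref{disc171015b} (resp.\ onto tuples through $\prod_nx_n$), and verify the clauses with Lemma~\ref{lem190110a}, Theorem~\ref{lem170501a}, and Corollary~\ref{cor170501a}. The reverse implications are where you genuinely diverge. The paper proves (ii)$\implies$(i) by a strong induction on $\zaksmor(\hat f)$ showing that every morphism decomposes into weakly irreducible morphisms and weak equivalences, then invokes the categorical characterizations of atomicity and half-factoriality (Theorems~\ref{prop170503a} and~\ref{prop170503d}); for (iii)$\implies$(i) it transports $\zaksobj$ back to a length function on elements and cites Zaks's Lemma~1.3. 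You instead argue directly at the element level: atomicity by infinite descent on $\zaksmor((1)\to(x))$ (resp.\ $\zaksobj((x))$), using the key observation that a non-atomic element admits a proper factorization with a non-atomic factor, so the strict inequality forced by the ``$0$ iff weak equivalence'' clause cannot iterate forever in $\bbn_0$; and half-factoriality by exhibiting, for each irreducible factorization $x=p_1\cdots p_m$, a chain of $m$ weakly irreducible divisibility morphisms (Proposition~\ref{prop170501b}) whose composite is the unique morphism $(1)\to(x)$, so additivity pins $\zaksmor((1)\to(x))=m$ (resp.\ $\zaksobj((x))=m$ via the weak equivalence $(x)\to(p_1,\ldots,p_m)$ and the tensor clause). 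Your route is more self-contained --- it avoids Theorem~\ref{prop170503d} and the external appeal to Zaks's lemma entirely --- while the paper's route buys uniformity with the rest of Section~\ref{accp-implies-atomic} by recycling its morphism-decomposition machinery. Both are sound; no gaps.
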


\begin{proof}
\eqref{prop170918a1}$\implies$\eqref{prop170918a2}
Assume that $D$ is an HFD and not a field. 
In particular, $D$ is atomic.
For each $x\in D\setminus\{0\}$, 
let $\zakselt(x)=n$ where $x=up_1\cdots p_n$ with $u$ a unit of $D$ and each $p_i$ an irreducible.
The fact that $D$ is an HFD implies that $\zakselt$ is well defined (i.e., independent of the choice of factorization).
By the proof of~\cite[Lemma~1.3]{zaks}, this function has the following properties:
\begin{enumerate}[(\protect{\ref{prop170918a1}}-1)]
\item $\im(\zakselt)=\bbn_0$,
\item $\zakselt(x)=0$ if and only if $x$ is a unit of $D$, 
\item $\zakselt(x)=1$ if and only if $x$ is irreducible, and
\item $\zakselt(xy)=\zakselt(x)+\zakselt(y)$ for all non-zero $x,y\in D$.
\end{enumerate}
Condition~(\ref{prop170918a1}-1) is where we use the assumption that $D$ is not a field.

For each morphism $\hat f\colon(x_n)\to(y_m)$ 
in
$\catf(D\setminus\{0\})$,
recall that there is a unique element $a\in D\setminus\{0\}$ such that $\prod_my_m=a\prod_nx_n$;
define $\zaksmor(\hat f)=\zakselt(a)\in\bbn_0$.
If $\hat g\colon(y_m)\to(z_p)$ is another 
morphism,
the element $b\in D\setminus\{0\}$
satisfying $\prod_pz_p=b\prod_my_m$ therefore also satisfies $\prod_pz_p=ab\prod_nx_n$,
so we have
\begin{align*}
\zaksmor(\hat g\circ\hat f)
&=\zakselt(ab)
=\zakselt(a)+\zakselt(b)
=\zaksmor(\hat f)+\zaksmor(\hat g).
\end{align*}
Thus, condition~(\ref{prop170918a2}-4) is satisfied.

For condition~(\ref{prop170918a2}-1),
since we have
$\im(\zakselt)=\bbn_0$, let $n\in \bbn_0$ be given and let $x\in D\setminus\{0\}$ be such that
$\zakselt(x)=n$. By definition, it follows that the morphism $\widehat{\id_{[1]}}\colon(1)\to(x)$ satisfies
$\zaksmor(\widehat{\id_{[1]}})=\zakselt(x)=n$. Thus, we have $\im(\zaksmor)=\bbn_0$.

For condition~(\ref{prop170918a2}-3), consider a morphism $\hat f\colon(x_n)\to(y_m)$, with 
$a\in D\setminus\{0\}$ such that $\prod_my_m=a\prod_nx_n$.
By 
Theorem~\ref{lem170501a},
the morphism $\hat f$ is weakly irreducible if and only if $a$ is irreducible in $D$,
that is, if and only if $\zaksmor(\hat f)=\zakselt(a)=1$.

Condition~(\ref{prop170918a2}-2) is verified similarly,  
with Lemma~\ref{lem190110a}.
This completes the proof of the implication \eqref{prop170918a1}$\implies$\eqref{prop170918a2}.

\eqref{prop170918a1}$\implies$\eqref{prop170918a3}
Assume that $D$ is an HFD,
and not a field. 
We use the function $\zakselt$ and the properties~(\ref{prop170918a1}-1)--(\ref{prop170918a1}-4)
from the proof of the 
implication \eqref{prop170918a1}$\implies$\eqref{prop170918a3}.

For 
each
tuple $(x_n)$ of $\catf(D\setminus\{0\})$, set
$\zaksobj((x_n))=\zakselt(\prod_nx_n)$.
Thus, the definition $(x_n)\otimes(y_m)=(x_1,\ldots,x_N,y_1,\ldots,y_M)$ explains the first equality in the next display,
and the second equality is from the properties of $\zakselt$.
\begin{align*}
\textstyle
\zaksobj((x_n)\otimes(y_m))
&\textstyle=\zakselt(\prod_nx_n\cdot\prod_my_m)\\
&\textstyle=\zakselt(\prod_nx_n)+\zakselt(\prod_my_m)\\
&\textstyle=\zaksobj((x_n))+\zaksobj((y_m))
\end{align*}
The third equality is by definition. 
This explains condition~(\ref{prop170918a3}-3).

For condition~(\ref{prop170918a3}-1), let $n\in\bbn_0$
 be given and let $x\in D\setminus\{0\}$ be such that
$\zakselt(x)=n$. By definition, it follows that the 1-tuple $(x)$ satisfies
$\zaksobj((x))=\zakselt(x)=n$. Thus, we have $\im(\zaksmor)=\bbn_0$.

For condition~(\ref{prop170918a3}-2), consider a tuple 
$(x_n)$.
Corollary~\ref{cor170501a} implies that
$(x_n)$ is 
weakly
irreducible in $\catf(D\setminus\{0\})$
if and only if
$\prod_nx_n$ is irreducible in $D$, that is, if and only if $\zaksobj((x_n))=\zakselt(\prod_nx_n)=1$.

For condition~(\ref{prop170918a3}-4), consider a morphism $\hat f\colon(x_n)\to(y_m)$,
and let $a\in D\setminus\{0\}$ be such that $\prod_my_m=a\prod_nx_n$.
Thus, we have
\begin{align*}
\zaksobj((y_m))
&\textstyle
=\zakselt(\prod_my_m)
\textstyle
=\zakselt(a\prod_nx_n)\\
&\textstyle
=\zakselt(a)+\zakselt(\prod_nx_n)
\textstyle
\geq\zakselt(\prod_nx_n)
\textstyle
=\zaksobj((x_n)).
\end{align*}
Moreover, equality holds in the penultimate step here if and only if $\zakselt(a)=0$,
that is, if and only if $a$ is a unit,
that is, if and only if $\hat f$ is a weak equivalence by
Lemma~\ref{lem190110a}.
This completes the proof of the implication \eqref{prop170918a1}$\implies$\eqref{prop170918a3}.

\eqref{prop170918a2}$\implies$\eqref{prop170918a1}
Let $\zaksmor$ be given as in condition~\eqref{prop170918a2}.
We argue as in the proof of~\cite[Lemma~1.3]{zaks} to show that $D$ is an HFD,
and not a field.

Claim: $D$ is atomic.
Let $\hat f\colon (x_n)\to(y_m)$ be a morphism in $\mord$.
According to Theorem~\ref{prop170503a}, we need to show that $\hat f$ factors as a  composition
of finitely many weakly irreducible morphisms and weak equivalences.
We verify this condition by strong induction on $L=\zaksmor(\hat f)$.
If $L=0$ or 1, then we are done by condition~(\ref{prop170918a2}-2) or~(\ref{prop170918a2}-3), respectively.
This addresses the base case. 
For the induction step, assume that $L=\zaksmor(\hat f)\geq 2$ and that each morphism $\hat g$ in
$\mord$ such that $\zaksmor(\hat g)<L$ factors as a  composition
of finitely many weakly irreducible morphisms and weak equivalences.
Conditions~(\ref{prop170918a2}-2) and~(\ref{prop170918a2}-3) imply that $\hat f$ is not a weak equivalence and
is not weakly irreducible. Thus, there are morphisms $\hat g$ and $\hat h$ in $\mord$ that are not weak equivalences
and such that $\hat f=\hat h\circ\hat g$. 
Our assumptions about $\zaksmor$ imply that $\zaksmor(\hat g),\zaksmor(\hat h)\geq 1$ and 
$\zaksmor(\hat h)+\zaksmor(\hat g)=\zaksmor(\hat h\circ\hat g)=\zaksmor(\hat f)\geq 2$.
In particular, we have $\zaksmor(\hat g),\zaksmor(\hat h)<\zaksmor(\hat f)=L$,
so by our induction assumption, the morphisms $\hat g$ and $\hat h$ both factor as   compositions
of finitely many weakly irreducible morphisms and weak equivalences, hence,
so does their composition $\hat f$.
This confirms the claim.

Now, let $\hat f\colon (x_n)\to(y_m)$ be a morphism in $\mord$,
and use what we have just established to find a finite list $\widehat{p_1},\ldots,\widehat{p_L}$
of weakly irreducible morphisms and weak equivalences such that $\hat f=\widehat{p_1}\circ\cdots\circ\widehat{p_L}$.
Using our assumptions about $\zaksmor$, we have
$\zaksmor(\hat f)=\zaksmor(\widehat{p_1})+\cdots+\zaksmor(\widehat{p_L})$ and this equals the number of 
weakly irreducible morphisms in the list $\widehat{p_1},\ldots,\widehat{p_L}$. 
In particular, the number of weakly irreducible morphisms in this list is independent of the choice of factorization of $\hat f$.
Thus, Theorem~\ref{prop170503d} implies that $D$ is an HFD.

To show that $D$ is not a field, let $\hat f\colon (x_n)\to(y_m)$ be a morphism in 
$\catf(D\setminus\{0\})$ such that $\zaksmor(\hat f)=1$, since $\im(\zaksmor)=\bbn_0$.
Condition~(\ref{prop170918a2}-3) implies that $\hat f$ is weakly irreducible; in particular, it is not a weak equivalence.
Lemma~\ref{lem190110a}
implies
that the non-zero element $a\in D$ such that $\prod_my_m=a\prod_nx_n$ is not a unit in $D$,
so $D$ is not a field.
This concludes the proof of this implication.

\eqref{prop170918a3}$\implies$\eqref{prop170918a1}
Let $\zaksobj$ be given as in item~\eqref{prop170918a3}.
For all $x\in D\setminus\{0\}$, set $\zakselt(x)=\zaksobj((x))$.
Since for all $x,y\in D\setminus\{0\}$, the factorization morphism $(xy)\to(x,y)=(x)\otimes(y)$ is a weak equivalence,
we have
\begin{align*}
\zakselt(xy)
&=\zaksobj((xy))
=\zaksobj((x)\otimes(y))
=\zaksobj((x))+\zaksobj((y))
=\zakselt(x)+\zakselt(y).
\end{align*}
Thus, condition~(\ref{prop170918a1}-4) from the first part of this proof is satisfied.
Furthermore, this implies that
$\zakselt(1)=\zakselt(1\cdot 1)=\zakselt(1)+\zakselt(1)$
and thus $\zakselt(1)=0$.

Next, we show that an element $x\in D\setminus\{0\}$ is a unit if and only if $\zaksobj((x))=0$. 
Indeed, consider the morphism $\widehat{\id_{[1]}}\colon(1)\to(x)$. 
By assumption, $\zaksobj((1))=0\leq \zaksobj((x))$.
Moreover,  $x$ is a unit in $D$ if and only if $\widehat{\id_{[1]}}$ is a weak equivalence by 
Lemma~\ref{lem190110a},
if and only if $\zaksobj((x))=\zaksobj((1))=0$ by condition~(\ref{prop170918a3}-4).
Thus, condition~(\ref{prop170918a1}-2) from the first part of this proof is satisfied.

We  verify condition~(\ref{prop170918a1}-3) similarly:
Because of Corollary~\ref{cor170501a}, an element $x\in D\setminus\{0\}$ is
irreducible in $D$ if and only if the 1-tuple $(x)$ is 
weakly
irreducible in $\catf(D\setminus\{0\})$,
that is, if and only if $\zakselt(x)=\zaksobj((x))=1$.

Next, we  verify condition~(\ref{prop170918a1}-1).
By assumption, there 
exists
a tuple $(x_n)$ in $\catf(D\setminus\{0\})$ such that $\zaksobj((x_n))=1$
by what we have already shown.
Condition~(\ref{prop170918a3}-2) implies that $(x_n)$ is 
weakly
irreducible in $\catf(D\setminus\{0\})$,
so the element $x=\prod_nx_n$ is irreducible in $D$ by Corollary~\ref{cor170501a}.
Hence, $\zakselt(x)=1$. Condition~(\ref{prop170918a1}-4) implies that $\zakselt(x^k)=k$ for all $k\in\bbn$.
As $\zakselt(1)=0$, it follows that $\im(\zakselt)=\bbn_0$, as~desired.

Define a function $\ell$ from the set of non-zero non-units of $D$ to $\bbn$ by the formula $\ell(x)=\zakselt(x)$,
i.e., $\ell$ is obtained from $\zakselt$ by restricting the domain and codomain.
Conditions~(\ref{prop170918a3}-1)--(\ref{prop170918a3}-4) show that $\ell$ is well-defined and satisfy the following:
\begin{enumerate}[(1)]
\item $\im(\ell)=\bbn$,
\item $\ell(x)=1$ if and only if $x$ is irreducible, and
\item $\ell(xy)=\ell(x)+\ell(y)$ for all non-zero $x,y\in D$.
\end{enumerate}
These are exactly the conditions that allow us to apply~\cite[Lemma~1.3]{zaks} to conclude that $D$ is an HFD.
(Note that conditions~(1) and~(2) imply that $D$ has an irreducible element, in particular, a non-unit, so $D$ is not a field.)
\end{proof}

\section*{Acknowledgments}
We are grateful to Jim Coykendall 
and George Janelidze
for their thoughtful suggestions about this work.


\begin{thebibliography}{10}

\bibitem{AAZ}
D.~D. Anderson, D.~F. Anderson, and M.~Zafrullah, \emph{Factorization in
  integral domains}, J. Pure Appl. Algebra \textbf{69} (1990), no.~1, 1--19.
  \MR{1082441}

\bibitem{MR1406095}
Jean B\'{e}nabou, \emph{Some geometric aspects of the calculus of fractions},
  Appl. Categ. Structures \textbf{4} (1996), no.~2-3, 139--165, The European
  Colloquium of Category Theory (Tours, 1994). \MR{1406095}

\bibitem{sather:idpd}
S.~Chapman and S.~Sather-Wagstaff, \emph{Irreducible divisor pair domains},
  preprint.

\bibitem{coykendall:idg}
J.~Coykendall and J.~Maney, \emph{Irreducible divisor graphs}, Comm. Algebra
  \textbf{35} (2007), no.~3, 885--895. \MR{2305238}

\bibitem{MR2102294}
W.~G. Dwyer, P.~S. Hirschhorn, D.~M. Kan, and J.~H. Smith, \emph{Homotopy limit
  functors on model categories and homotopical categories}, Mathematical
  Surveys and Monographs, vol. 113, American Mathematical Society, Providence,
  RI, 2004. \MR{2102294}

\bibitem{MR0210125}
P.~Gabriel and M.~Zisman, \emph{Calculus of fractions and homotopy theory},
  Ergebnisse der Mathematik und ihrer Grenzgebiete, Band 35, Springer-Verlag
  New York, Inc., New York, 1967. \MR{0210125}

\bibitem{G3}
R.~W. Gilmer, \emph{Multiplicative ideal theory}, Queen's Papers in Pure and
  Applied Mathematics, No. 12, Queen's University, Kingston, Ont., 1968.
  \MR{0229624 (37 \#5198)}

\bibitem{jaffard}
P.~Jaffard, \emph{Les syst\`emes d'id\'eaux}, Travaux et Recherches
  Math\'ematiques, IV, Dunod, Paris, 1960. \MR{0114810}

\bibitem{krull:ab}
W.\ Krull, \emph{Allgemeine bewertungstheorie}, J. Reine Angew. Math.
  \textbf{167} (1932), 160--196.

\bibitem{lorenzen}
P.~Lorenzen, \emph{Abstrakte {B}egr\"undung der multiplikativen
  {I}dealtheorie}, Math. Z. \textbf{45} (1939), 533--553. \MR{0000604}

\bibitem{mockor}
J.~Mo\v{c}ko\v{r}, \emph{Groups of divisibility}, Mathematics and its
  Applications (East European Series), D. Reidel Publishing Co., Dordrecht,
  1983. \MR{720862}

\bibitem{ohm}
J.~Ohm, \emph{Semi-valuations and groups of divisibility}, Canad. J. Math.
  \textbf{21} (1969), 576--591. \MR{0242819}

\bibitem{rump}
W.~Rump and Y.~C. Yang, \emph{Jaffard-{O}hm correspondence and {H}ochster
  duality}, Bull. Lond. Math. Soc. \textbf{40} (2008), no.~2, 263--273.
  \MR{2414785}

\bibitem{zaks}
A.~Zaks, \emph{Half-factorial-domains}, Israel J. Math. \textbf{37} (1980),
  no.~4, 281--302. \MR{599463}

\end{thebibliography}
\providecommand{\bysame}{\leavevmode\hbox to3em{\hrulefill}\thinspace}
\providecommand{\MR}{\relax\ifhmode\unskip\space\fi MR }
\providecommand{\MRhref}[2]{%
  \href{http://www.ams.org/mathscinet-getitem?mr=#1}{#2}
}
\providecommand{\href}[2]{#2}

\bibliographystyle{amsplain}

\end{document}